\definecolor{violet}{rgb}{0.0,0.2,0.7}
\definecolor{rouge2}{rgb}{0.8,0.0,0.2}
 \theoremstyle{plain}    
 \newtheorem{thm}{Theorem}[section]
\theoremstyle{plain} 
\newtheorem{bigthm}{Theorem}
\newtheorem{bigcoro}[bigthm]{Corollary}
 \numberwithin{equation}{section} %% Comment out for sequentially-numbered
 \numberwithin{figure}{section} %% Comment out for sequentially-numbered
 \newtheorem{cor}[thm]{Corollary} %%Delete [thm] to re-start numbering
 \theoremstyle{plain}    
 \newtheorem{prop}[thm]{Proposition} %%Delete [thm] to re-start numbering
 \theoremstyle{plain}    
 \newtheorem{lem}[thm]{Lemma} %%Delete [thm] to re-start numbering
 \theoremstyle{remark}
 \theoremstyle{remark}
 \newtheorem{rem}[thm]{Remark}
 \theoremstyle{definition}
\newtheorem{exa}[thm]{Example}
\theoremstyle{plain}  
\theoremstyle{plain}
\newtheorem{setup}[thm]{Setup}
\theoremstyle{plain}
\theoremstyle{definition}
\newtheorem{defi}[thm]{Definition}
\newtheorem*{ackn}{Acknowledgements}
\newtheorem{conj}[thm]{Conjecture}
\newcommand{\C}{{\mathbb{C}}}
\newcommand{\N}{{\mathbb{N}}}
\newcommand{\PP}{{\mathbb{P}}}
\newcommand{\Q}{{\mathbb{Q}}}
\newcommand{\R}{{\mathbb{R}}}
\newcommand{\CK}{\hyperref[CK]{Condition (K)} }
\def\1{\mathbf{1}}
\newcommand{\zz}{|z|^2}
\newcommand{\zze}{|z|^2+\ep^2}
\renewcommand{\a}{\alpha}
\newcommand{\e}{\varepsilon}
\newcommand{\om}{\omega}
\newcommand{\f}{\varphi}
\newcommand{\p}{\psi}
\newcommand{\ome}{\omega_{\ep}}
\newcommand{\vp}{\varphi}
\newcommand{\ep}{\varepsilon}
\newcommand{\Ric}{\mathrm{Ric} \,}
\renewcommand{\ge}{\geq}
\renewcommand{\le}{\leq}
\newcommand{\PSH}{\operatorname{PSH}}
\title{Diameter of K\"ahler currents}
\date{\today}
\author{Vincent Guedj}
\address{Institut Universitaire de France \& Institut de Mathématiques de Toulouse; UMR 5219, Université de Toulouse; CNRS, UPS, 118 route de Narbonne, F-31062 Toulouse Cedex 9, France}
\email{vincent.guedj@math.univ-toulouse.fr}
\author{Henri Guenancia}
\address{Institut de Mathématiques de Toulouse; UMR 5219, Université de Toulouse; CNRS, UPS, 118 route de Narbonne, F-31062 Toulouse Cedex 9, France}
\email{henri.guenancia@math.cnrs.fr}
\author{Ahmed Zeriahi}
\address{Institut de Mathématiques de Toulouse; UMR 5219, Université de Toulouse; CNRS, UPS, 118 route de Narbonne, F-31062 Toulouse Cedex 9, France}
\email{ahmed.zeriahi@math.univ-toulouse.fr}
\begin{document}

\begin{abstract}  
We establish upper bounds on the diameter of compact K\"ahler manifolds endowed with
K\"ahler metrics whose volume satisfies an Orlicz integrability condition. 
Our results 
%partially 
extend previous estimates due to 
Fu-Guo-Song, Y.Li, and Guo-Phong-Song-Sturm.
In particular, they do not involve any constraint on the vanishing of the volume form.
Moreover, we show that singular K\"ahler-Einstein currents have finite diameter,
provided that their local potentials are H\"older continuous.
\end{abstract} 

\maketitle

\tableofcontents

\section*{Introduction}

The classical Bonnet-Myers theorem ensures that a complete $n$-dimensional Riemannian manifold
$(M,g)$ with positive Ricci curvature ${\rm Ric}(g) \ge (n-1) K$, $K>0$, 
has finite diameter 
$$
{\rm diam}(M,g)  \le \frac{\pi}{\sqrt{K}}.
$$

 This estimate breaks down  if the Ricci curvature takes zero or negative values, 
and one then needs to add extra assumptions on the manifold.
Several works have  been devoted to establish diameter upper bounds in the last two decades
for {\it K\"ahler manifolds},
   in connection with the study of degenerate families of 
  K\"ahler-Einstein metrics (see notably \cite{Paun01,Tos09,RZ,Li20,GPTW21,GPSS22}).
  The main purpose of this article is to extend these estimates in several directions.
  
  \medskip

  We start by generalizing works of Fu-Guo-Song \cite{FGS20}
  and Guo-Song \cite{GS22},
  establishing   a  diameter upper bound under a Ricci lower bound for any metric
  whose potential is uniformly bounded, and whose
  cohomology class belongs to a compact subset of the Kähler cone.

\begin{bigthm} \label{thmA}
Let $(X,\omega_X)$ be a  compact K\"ahler manifold 
%of complex dimension $n \in \N^*$.
and let ${\mathcal K} \subset H^{1,1}(X,\R)$ be a compact subset of the Kähler cone of $X$.
%bounded subset of the K\"ahler cone of $X$ such that $\overline{{\mathcal K}} \subset H_{\rm big}^{1,1}(X,\R)$.
Fix $A,B,C \in \R$ and
let $\omega$ be a K\"ahler form such that  $[\omega] \in {\mathcal K}$,
$$
\|\f_{\omega}\|_{\infty} \le C
\; \; \text{ and } \; \; 
{\rm Ric} (\omega) \ge -A \omega-B\omega_X.
$$

\noindent
Then ${\rm diam}(X,\omega) \le D$, where $D$ depends
only on $A,B,C$ and ${\mathcal K}$.
\end{bigthm}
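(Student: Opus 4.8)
\medskip
\noindent\emph{Proof strategy.}
The plan is to pass to a setting with uniform constants, produce Green's function estimates for $(X,\om)$ out of the bounded potential hypothesis alone, and then combine them with the Laplacian comparison furnished by the Ricci lower bound, following Fu--Guo--Song and Guo--Phong--Song--Sturm.

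\smallskip
\noindent\emph{Step 1 (uniformisation).}
Compactness of $\mathcal K$ inside the K\"ahler cone lets one fix, depending only on $\mathcal K$ and $\om_X$: a constant $\de\in(0,1)$ and, for each $\a\in\mathcal K$, a K\"ahler representative $\theta_\a\in\a$ with $\de\,\om_X\le\theta_\a\le\de^{-1}\om_X$; volume bounds $0<V_0\le\int_X\a^n\le V_1$; and a bound $\int_X\om_X\wedge\a^{n-1}\le V_1$ on the relevant intersection numbers. Writing $\om=\theta_{[\om]}+dd^c\vp$, the hypothesis $\|\vp_\om\|_\infty\le C$ becomes $\|\vp\|_\infty\le C'$ for a uniform $C'$. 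Finally, rescaling $\om$ by the bounded factor $\bigl(\int_X\om_X^n/\int_X\om^n\bigr)^{1/n}$ -- which affects $\diam(X,\om)$, $A$ and $B$ only through $V_0$ and $V_1$ -- we may assume $\int_X\om^n=\int_X\om_X^n=:V$.

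\smallskip
\noindent\emph{Step 2 (the crux: uniform Green's function estimates).}
Let $G_\om$ be the Green function of $\Delta_\om$, normalised by $\int_XG_\om(x,\cdot)\,\om^n=0$. The heart of the matter is to produce a constant $\Gamma=\Gamma(C',\mathcal K,\om_X)$ such that
\[
\frac1V\int_X|G_\om(x,\cdot)|\,\om^n\le\Gamma
\qquad\text{and}\qquad
\frac1V\int_X|\nabla_yG_\om(x,y)|\,\om^n(y)\le\Gamma\qquad(x\in X),
\]
together with a uniform Sobolev inequality for $\om$. The decisive point is that, thanks to $\|\vp\|_\infty\le C'$, these estimates require \emph{no} integrability assumption on the density $\om^n/\om_X^n$, which here is genuinely uncontrolled in both directions. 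One obtains them by running the scheme of \cite{FGS20,GS22,GPSS22} intrinsically to $\om$: comparing $\om^n$-integrals with $\om_X$-quantities through integration by parts that uses $\|\vp\|_\infty\le C'$, and solving the auxiliary complex \MoAmp\ equations relative to $\om$ -- all the constants depending only on $\|\vp\|_\infty$ and on the cohomology class $[\om]$, hence uniformly as $[\om]$ ranges over $\mathcal K$. I expect this step, and the required uniformity in particular, to be the main obstacle.

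\smallskip
\noindent\emph{Step 3 (conclusion).}
Fix $x\in X$, put $\rho=d_\om(x,\cdot)$ and $R=\max_X\rho$, so that $R\le\diam(X,\om)\le2R$. The bound $\Ric(\om)\ge-A\om-B\om_X$ yields, via the Riccati comparison along minimal $\om$-geodesics issued from $x$, a distributional inequality of the schematic form
\[
\Delta_\om\rho\ \le\ \frac{c_n}{\rho}\ +\ C_1\bigl(1+\tr_\om\om_X\bigr),\qquad c_n\ \text{dimensional},\quad C_1=C_1(A,B),
\]
the error produced by $-B\om_X$ being dominated by a multiple of $\tr_\om\om_X$, whose $\om^n$-mass is cohomological: $\int_X(\tr_\om\om_X)\,\om^n=n\int_X\om_X\wedge\om^{n-1}\le nV_1$. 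Pairing this with $G_\om(x,\cdot)$, integrating by parts to bring in $\nabla_yG_\om$, and invoking the estimates of Step 2 -- the $1/\rho$ contribution being absorbed through the volume growth that the Ricci lower bound and Step 2 provide, the $\tr_\om\om_X$ contribution through the cohomological bound -- one controls $R-\tfrac1V\int_X\rho\,\om^n$, hence $R$, by a constant $D=D(A,B,C,\mathcal K)$. As $x$ was arbitrary, $\diam(X,\om)\le2D$.
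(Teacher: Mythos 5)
There is a genuine gap, and it sits exactly where you flag it: Step 2. You assert that uniform bounds on $\int_X|G_\om(x,\cdot)|\,\om^n$, on $\int_X|\nabla_yG_\om(x,y)|\,\om^n$, and a uniform Sobolev inequality for $\om$ follow from $\|\vp\|_\infty\le C'$ alone, ``with no integrability assumption on the density''. This is not available: the Green's function and Sobolev machinery of Guo--Phong--Song--Sturm is driven by an Orlicz/entropy bound on $f_\om=\om^n/dV_X$ (and, for the diameter statement, a lower bound $f_\om\ge\gamma$), and a uniform bound on the potential gives no such control --- the \MoAmp\ measure of a bounded $\om_X$-psh function can fail every condition of this type. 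Deferring the entire analytic content to an unproved claim that is, as stated, derived from the wrong hypothesis means the Ricci lower bound never actually enters your argument except through the Laplacian comparison of Step 3, and that comparison alone cannot bound the diameter. (As a secondary point, the distributional inequality you write in Step 3 is also optimistic: with a variable lower bound $\Ric(\om)\ge-A\om-B\om_X$, the Riccati comparison produces an error involving the integral of $\om_X(\gamma',\gamma')$ along the geodesic, not the pointwise trace $\tr_\om\om_X$, so the reduction to the cohomological quantity $\int_X\om_X\wedge\om^{n-1}$ needs justification.)

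The missing idea is how to convert the curvature hypothesis into volume-form control. The paper writes $\Ric(\om)=-A\om-B\om_X+T$ with $T\ge0$, applies the $\partial\bar\partial$-lemma to get $\om^n=e^{A\vp-\psi}\om_X^n$ for a quasi-psh $\psi$ lying in a fixed compact family (compactness coming from $\int_Xe^{-\psi}\om_X^n\le e^{AC}[\om]^n$ together with an upper bound on $\sup_X\psi$ obtained via Chern--Lu and the maximum principle). Semicontinuity of complex singularity exponents (Demailly--Koll\'ar) combined with the strong openness theorem then yields a \emph{uniform} $p>1$ with $\|e^{-\psi}\|_{L^p}$ bounded, hence $\|f_\om\|_{L^p}$ uniformly bounded. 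From there the conclusion does not go through $G_\om$ at all: Kolodziej-type estimates give a uniform H\"older modulus for $\vp$, and Y.~Li's observation that $\rho=d_\om(x,\cdot)$ satisfies $d\rho\wedge d^c\rho\le\om$, fed into a Campanato--Morrey argument, bounds the diameter. If you want to salvage your route, you must first prove this $L^p$ (or at least entropy plus non-degeneracy) bound on $f_\om$ from the Ricci hypothesis; once you have it, invoking \cite{GPSS22} is legitimate --- and is in fact what the paper does in the more general case of a merely big closure of $\mathcal K$.
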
 

Besides providing a quite general statement (twisted Ricci curvature lower bound, uniform bound
on potentials instead of an Orlicz condition on the density),
our proof is completely different than previous ones. It is purely complex analytic,
using as an essential tool the semi-continuity properties of complex
singularity exponents \cite{DK} and the resolution of the openness conjecture \cite{GuanZhou15}.

Combining our techniques with the recent results of Guo-Phong-Song-Sturm \cite{GPSS22}, we can actually extend the result above to the case where $\mathcal K$ is merely a {\it bounded} subset of the Kähler cone 
such that its closure $\overline{{\mathcal K}}$ is contained in the big cone, i.e. the cone of classes represented by Kähler currents. We refer to Theorem~\ref{thm:open} for this more general version along with the precise meaning of $\varphi_\omega$.

Poincaré type metrics  have constant Ricci curvature and  infinite diameter;
we exhibit well chosen smooth K\"ahler approximants of the latter in Example \ref{Poincare}, showing
that the uniform bound on $\|\f_{\omega}\|_{\infty}$ is necessary.
On the other hand, $K$-semistable Fano manifolds which are {\it not} $K$-stable illustrate
that one cannot expect a complex proof of Myers theorem along these lines
(see Example \ref{Fano}).

%\medskip

 It is however difficult in general to guarantee a uniform Ricci lower bound
 on natural approximants of canonical K\"ahler currents, even when the
 Ricci curvature of the limiting object is bounded below.
 We illustrate this principle in Section \ref{sec:Riccibound} where
 we give a systematic treatment of K\"ahler metrics having one isolated singularity
 which is $U(n,\C)$-invariant (radial symmetry): we show that the Ricci curvature of
 $U(n,\C)$-invariant smooth approximants is (almost) never bounded below.
 
 \smallskip
 
% It is thus highly desirable to obtain uniform estimates that are independent
Let us now go back to the recent breakthrough result of Guo-Phong-Song-Sturm \cite{GPSS22} mentioned above. It 
establishes a uniform upper bound on diameters of K\"ahler metrics $\omega$,
which only involves
\begin{itemize}
\item[$\bullet$] an upper-bound on the coholomogy class of $[\omega]$ in $H^{1,1}(X,\R)$;
%\item[$\bullet$] an Orlicz upper-bound on the volume density $f_{\omega}=\omega^n/dV_X$;
\item[$\bullet$] an upper-bound on $\int_X f_{\omega} (\log f_{\omega})^p$,
where$f_{\omega}=\omega^n/dV_X$ and $p>n$;
\item[$\bullet$] a uniform lower bound $f_{\omega} \geq \gamma$, 
where $(\gamma =0)$  has small Hausdorff dimension.
\end{itemize}
We refer the reader to \cite[Theorem 1.1]{GPSS22} for a precise statement which, moreover, contains
several extra pieces of information on the Riemannian Green's function associated to $\omega$,
as well as a non-collapsing result.

\smallskip

The main result of this article is the following estimate which does not involve any
uniform lower bound on $f_{\omega}$, the latter being somehow replaced
by the positivity of the cohomology class $[\omega]$.

\begin{bigthm} \label{thmB}
Let $X$ be a compact K\"ahler manifold of complex dimension $n$ and let $\mathcal H \subset \mathcal C^\infty(X, \Omega_X^{1,1})$ be the set of Kähler forms on $X$. Let $dV_X$ be a smooth volume form and let ${\mathcal K} \subset H^{1,1}(X,\R)$ be a compact subset of the K\"ahler cone of $X$.
Given $\omega\in \mathcal H$, we set $f_{\omega}=\omega^n/dV_X$
and consider, for $A>0$ and $p>2n$ fixed,
$$
{\mathcal H}_{A,p, \mathcal K}:=\{ \omega \in \mathcal H ;  \,  [\omega]\in \mathcal K \text{ and } 
\int_X f_{\omega} |\log f_\omega|^n (\log \circ \log (f_{\omega}+3))^p  dV_X \le A \}.
$$
Then there exists a uniform constant $C=C({\mathcal K},dV_X,A,p)>0$ such that
for all $\omega \in {\mathcal H}_{A,p, \mathcal K}$,
$$
{\rm diam}(X,\omega) \le C.
$$
More precisely, given any $\gamma<p-2n$, there exists a constant $C=C({\mathcal K},dV_X,A,p,\gamma)>0$ such that for any $\omega \in {\mathcal K}_{A,p}$ and any two points $x,y\in X$, we have 
 \[d_\omega(x,y) \le \frac{C}{(\log |\log d_{\omega_X}(x,y)|)^{\frac{\gamma}{2}}}.\]
\end{bigthm}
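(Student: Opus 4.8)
The plan is to follow the PDE method of Guo--Phong--Song and Guo--Phong--Song--Sturm \cite{GPSS22}, with two adjustments: the $L^p(\log)$-integrability of $f_\omega$ assumed there is relaxed to the present Orlicz condition --- which forces one to use the sharp uniform a priori estimates for complex Monge--Amp\`ere equations and to track the Orlicz weight quantitatively through the iteration --- and the pointwise lower bound on $f_\omega$ used in \cite{GPSS22} is removed, its role being played by the positivity of the cohomology class, available here since $[\omega]$ ranges over a compact subset $\mathcal K$ of the K\"ahler cone.

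\emph{Normalization and $L^\infty$-estimate.} Since $\mathcal K$ is compact in the K\"ahler cone, for each $\alpha\in\mathcal K$ one may choose a K\"ahler representative $\theta_\alpha$, depending continuously on $\alpha$, with $\lambda^{-1}\omega_X\le\theta_\alpha\le\lambda\omega_X$ for some uniform $\lambda=\lambda(\mathcal K)\ge1$; in particular $0<c_0\le\int_X\omega^n=\alpha^n\le C_0$. Writing $\omega=\theta_\alpha+dd^c\varphi$ with $\sup_X\varphi=0$ (so that $\varphi$ agrees with $\varphi_\omega$ up to normalization), the Monge--Amp\`ere equation reads $(\theta_\alpha+dd^c\varphi)^n=f_\omega\,dV_X$, and the Orlicz hypothesis puts $f_\omega$ in a class for which the uniform $L^\infty$ a priori estimate \cite{GPSS22} gives $\|\varphi\|_\infty\le C_1=C_1(\mathcal K,dV_X,A,p)$; only a fraction of the double-logarithmic weight is needed at this stage, the full exponent $p>2n$ being consumed below. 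With $\theta_\alpha\ge\lambda^{-1}\omega_X$, this records that $\omega$ dominates a fixed multiple of $\omega_X$ modulo a uniformly bounded potential --- the surrogate for the density lower bound of \cite{GPSS22}.

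\emph{The distance comparison.} The core is the estimate $d_\omega(x,y)\le C(\log|\log\delta|)^{-\gamma/2}$ for $x,y\in X$ with $\delta:=d_{\omega_X}(x,y)$ small; the absolute bound $\diam(X,\omega)\le C$ then follows by covering $X$ by boundedly many small $\omega_X$-balls and chaining the estimate along an $\omega_X$-minimizing geodesic of length $\le\diam(X,\omega_X)$. To prove the comparison I would introduce a logarithmic scale $T\asymp\log|\log\delta|\to\infty$ and solve, in the spirit of \cite{GPSS22}, an auxiliary complex Monge--Amp\`ere equation whose right-hand side is $\omega^n$ reweighted by a normalized non-negative cut-off concentrated near $B_{\omega_X}(x,\delta)$ and adapted to the scale $T$. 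A uniform $L^\infty$-bound for its solution --- obtained by a De~Giorgi--Nash--Moser iteration based on the uniform Sobolev inequality for $(X,\omega)$, itself a consequence of $\|\varphi\|_\infty\le C_1$ and the entropy-type control of $f_\omega$ --- translates into a quantitative comparison of the $\omega$- and $\omega_X$-geometries at scale $T$ (equivalently, volume non-collapsing and a Green's-function-type bound for $(X,\omega)$, uniform over $\mathcal H_{A,p,\mathcal K}$); optimizing in $T$ against the loss imposed by the $(\log\circ\log)^p$-weight then yields the stated modulus for every $\gamma<p-2n$, the shift by $2n$ coming from propagating the weight through the $2n$-dimensional iteration and the exponent $\tfrac12$ from the passage from a metric tensor to a length.

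\emph{Main obstacle.} The crux, and the true departure from \cite{GPSS22}, is to run this iteration uniformly \emph{without} any pointwise lower bound on $f_\omega$: volume non-collapsing and the requisite integral inequalities must be extracted from the global positivity $[\omega]\in\mathcal K$ --- through $\|\varphi\|_\infty\le C_1$ and $\theta_\alpha\ge\lambda^{-1}\omega_X$ --- rather than from $f_\omega\ge\gamma$. One convenient way to arrange this would be a perturbation: replace $f_\omega$ by a renormalization of $f_\omega+\e$, solve the associated Monge--Amp\`ere equation in the class $\alpha$ to obtain $\omega_\e$ with $f_{\omega_\e}$ bounded below and Orlicz norm bounded uniformly in $\e$, carry out the argument for $\omega_\e$, and let $\e\to0$; but then one must show that $d_{\omega_\e}\to d_\omega$, itself delicate since $\omega_\e\to\omega$ only weakly. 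In either approach, a second recurring point is that the cut-offs in the auxiliary equations have to be chosen so that the Orlicz norms of the reweighted densities stay bounded uniformly in $x$ and $\delta$ --- which is precisely what pins down the $|\log f_\omega|^n(\log\circ\log)^p$ shape of the hypothesis and the threshold $p>2n$.
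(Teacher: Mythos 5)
Your plan is a genuinely different route from the paper's, but as written it contains a gap at exactly the point you label the ``main obstacle'', and the proposal does not close it. You want to run a Guo--Phong--Song--Sturm-type De Giorgi--Nash--Moser iteration for auxiliary Monge--Amp\`ere equations, and for this you need a uniform Sobolev inequality (equivalently, non-collapsing and Green's-function bounds) for $(X,\omega)$ itself. You assert that this is ``a consequence of $\|\varphi\|_\infty\le C_1$ and the entropy-type control of $f_\omega$'', but no such implication is available: a uniform bound on the potential and an Orlicz bound on the density do not prevent $\omega$ from degenerating completely on large open sets, and the Sobolev/non-collapsing estimate for $\omega$ \emph{without} a pointwise lower bound on $f_\omega$ is precisely the deep point that \cite{GPSS22} circumvents by assuming $f_\omega\ge\gamma$ and that was only resolved later in the announced work \cite{GPSS23}. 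Your fallback --- perturbing to $f_\omega+\e$, applying \cite{GPSS22} to $\omega_\e$, and letting $\e\to0$ --- runs into the difficulty you yourself flag: the potentials of $\omega_\e$ converge uniformly by stability, but this gives no control of $d_{\omega_\e}$ versus $d_\omega$ in the direction needed (one would need an upper bound for $d_\omega$ in terms of $\liminf d_{\omega_\e}$, and weak convergence of the metric tensors does not provide it). So the proposal identifies the right difficulties but in effect assumes the hard part.

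For contrast, the paper avoids studying the Green function of $\omega$ altogether. Starting from Yang Li's observation that $f(\cdot)=d_\omega(x_0,\cdot)$ satisfies $df\wedge d^cf\le\omega$, it represents $f(x)-f(x_0)$ by integrating $f$ against $(\omega_X+dd^c g_x)^n-\omega_X^n$, where $g_x$ is an (approximate) pluricomplex Green function of the \emph{ambient} metric with $(\omega_X+dd^c g_x)^n\approx\delta_x$ (Theorem~\ref{thm:projective} for $\mathbb{CP}^n$, then Theorem~\ref{thm:general} in general, with extra Hodge-theoretic terms $\Theta_x$). Each term is handled by Stokes and a twisted Cauchy--Schwarz inequality with weight $\chi''\circ g_x$, $\chi(t)=t/\log(B-t)^\gamma$; the only input about $\omega$ beyond $df\wedge d^cf\le\omega$ is the double-logarithmic modulus of continuity $m_{\varphi}(r)\lesssim(\log(-\log r))^{-1-\delta}$ of its potential, supplied by Theorem~\ref{thm:modulus1}(3) under the Orlicz hypothesis --- this is where $p>2n$ enters, and it replaces both the Sobolev inequality and the density lower bound of \cite{GPSS22}. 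If you want to salvage your approach, you would need to either prove the uniform Sobolev inequality for $\omega$ under the stated hypotheses (essentially the content of \cite{GPSS23}) or establish a stability result for distance functions under the $\e$-perturbation; neither is supplied by the ingredients you list.
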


We provide several explicit radial examples in Section \ref{sec:Riccibound}, 
showing that the assumptions made here on the density $f_{\omega}$ are close to be sharp.

\smallskip

All these results rely on fine continuity properties of Monge-Amp\`ere potentials.
If $f_{\omega}$ satisfies  \CK -introduced by Kolodziej in \cite{Kol98}-, then its 
Monge-Amp\`ere potential is uniformly bounded. 
Extending results of several authors \cite{Kol98,DDGHKZ14,GPTW21},
we show in Theorem~\ref{thm:modulus1} that one can obtain a precise control on its modulus of continuity
which is, moreover,
% essentially 
uniform with respect to the cohomology class $[\omega]$
(see Remark~\ref{rem:modulus}).
Theorem \ref{thmB} is thus a particular case of the following general estimate.

 \begin{bigthm}  \label{thmC}
 Let $(X,\om_X)$ be a compact Kähler manifold and let $E$ be a divisor with simple normal crossings. Let $X^\circ:=X\setminus E$ and let $T=\theta+dd^c\varphi$ be a closed positive $(1,1)$-current, where
  $\theta$ is smooth. 
  We assume that $\omega:=T|_{X^\circ}$ is a Kähler form
and that the modulus of continuity $m_\varphi$ of $\varphi$ satisfies 
$m_\varphi(r)\le \frac{C}{(\log(-\log r))^{1+\delta}}$ for some $C,\delta>0$. Then we have 
\[\mathrm{diam}(X^\circ, \omega)<+\infty.\]
\end{bigthm}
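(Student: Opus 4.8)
The plan is to reduce the assertion to a single uniform estimate near $E$. Since the normal crossing divisor $E$ has complex codimension $\ge 1$, the set $X^\circ$ is connected, and on every compact subset of $X^\circ$ the current $\omega=T|_{X^\circ}$ is a fixed smooth K\"ahler metric with fixed two-sided bounds; so, fixing once and for all a point $x_0\in X^\circ$ and a small neighbourhood $U$ of $E$, it suffices to bound, uniformly in $p\in U\cap X^\circ$, the $\omega$-distance from $p$ to the bulk $X^\circ\setminus U$, for then $\diam(X^\circ,\omega)\le 2\sup_p d_\omega(p,x_0)<\infty$. Cover $U$ by finitely many coordinate polydiscs; in one of them $E=\{z_1\cdots z_k=0\}$, one writes $\theta=dd^c h$ with $h$ smooth and sets $u:=h+\varphi$, a continuous psh function with $dd^c u=T$ and, for $r$ small, $m_u(r)\le C'(\log(-\log r))^{-(1+\delta)}$. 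Since $p\notin E$, the coordinates $p_1,\dots,p_k$ are non-zero, and I move $p$ into $\{|z_j|\ge\rho_0,\ j\le k\}\subset X^\circ\setminus U$ by $k$ successive ``radial excursions'', one coordinate at a time.

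The core is the estimate for one excursion. Freezing the other coordinates, $z_1\mapsto u(z_1,z')$ is continuous subharmonic; put $M(r):=\tfrac{1}{2\pi}\int_0^{2\pi}u(re^{i\alpha},z')\,d\alpha$ and $\psi(t):=M(e^t)-u(0,z')$. By convexity of circular means, $\psi$ is convex in $t$, non-negative, tends to $0$ as $t\to-\infty$, and $\psi(t)\le\operatorname{osc}_{\{|z_1|\le e^t\}}u(\cdot,z')\le m_u(c\,e^t)\le C''(\log(-t))^{-(1+\delta)}$ for $t$ small. A radial ray of argument $\alpha$ from $|z_1|=s$ to $|z_1|=\rho_0$ has $\omega$-length $\lesssim\int_s^{\rho_0}\sqrt{\Delta_{z_1}u(re^{i\alpha},z')}\,dr$; averaging over $\alpha$ and using concavity of $\sqrt{\ }$ together with $\tfrac{1}{2\pi}\int_0^{2\pi}\Delta_{z_1}u(re^{i\alpha},z')\,d\alpha=\tfrac1r\bigl(rM'(r)\bigr)'=\psi''(\log r)/r^2$, the average length is $\lesssim\int_{\log s}^{\log\rho_0}\sqrt{\psi''(t)}\,dt$. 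The crucial claim is that
\[
\int_{-\infty}^{\log\rho_0}\sqrt{\psi''(t)}\,dt\ \le\ C_\delta\ <\ \infty \qquad\text{uniformly in }s .
\]
This is proved by a two-layer Cauchy--Schwarz over scales: with $J_m=[-2^{m+1},-2^m]$ and $a_m:=\psi'(-2^m)-\psi'(-2^{m+1})\ge 0$ one has $\int_{J_m}\sqrt{\psi''}\le|J_m|^{1/2}\bigl(\int_{J_m}\psi''\bigr)^{1/2}=2^{m/2}a_m^{1/2}$; convexity of $\psi$ together with $\psi\ge0$, $\psi\to0$ gives $\sum_{m>j}2^m a_m\lesssim\psi(-2^j)\le C''(2^j\log 2)^{-(1+\delta)}$; grouping the indices into blocks $m\in(2^\ell,2^{\ell+1}]$ and applying Cauchy--Schwarz once more within each block bounds the total by $\sum_\ell\bigl(2^\ell\cdot C''(2^\ell\log2)^{-(1+\delta)}\bigr)^{1/2}\lesssim\sum_\ell 2^{-\ell\delta/2}<\infty$, which converges precisely because $\delta>0$. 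By Chebyshev's inequality a set of arguments $\alpha$ of positive measure yields a radial path of length $\le 2C_\delta$; since all the frozen coordinates are non-zero such a path lies in $X^\circ$, and the short circular arc joining $p$ to such an $\alpha$ may be taken on a circle $\{|z_1|=r'\}$ of small $\omega$-circumference (the circumference is $\lesssim\sqrt{\psi''(\log r')}$, and one picks $r'$ near $|p_1|$ with $\psi''(\log r')$ small, the average of $\psi''$ over nearby scales being $\psi'(\log r')-\psi'(\log|p_1|)\to0$).

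Iterating the excursion over $z_1,\dots,z_k$ --- at each step the restriction of $u$ to the relevant complex line is again continuous subharmonic with the same modulus of continuity, so the same constant $C_\delta$ applies --- moves $p$ into $\{|z_j|\ge\rho_0\ \forall j\le k\}$ at total cost $\le C(n)\,C_\delta$. There $\omega$ is a fixed smooth metric on a fixed compact piece of $X^\circ$, so $x_0$ is reached at a fixed further cost. Summing over the finitely many polydiscs covering $U$ gives $d_\omega(p,x_0)\le C$ independent of $p$, whence $\diam(X^\circ,\omega)<+\infty$.

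I expect the main obstacle to be the uniform bound on $\int_{-\infty}^{\log\rho_0}\sqrt{\psi''}$: a single Cauchy--Schwarz per scale, using only $a_m\le\psi'(-2^m)\lesssim\psi(-2^{m-1})2^{-(m-1)}$, merely gives $\sum_m m^{-(1+\delta)/2}$, which converges for $\delta>1$ but not for $0<\delta\le 1$; one really needs the two-layer Cauchy--Schwarz feeding in the oscillation control of $\psi$ through its convexity, and this is exactly where the hypothesis exponent $1+\delta>1$, and the $\log\log$, enter. A secondary technical nuisance is keeping all auxiliary paths inside $X^\circ$ near the higher-codimension strata of $E$, which forces the use of radial and circular (rather than straight) paths and the accompanying small-circumference bookkeeping for the angular alignments.
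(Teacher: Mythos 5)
Your core analytic estimate is correct and is a genuinely different, more elementary route than the paper's: the two--layer dyadic Cauchy--Schwarz (using $\sum_{m>j}2^m a_m\lesssim \psi(-2^j)$, which follows from $\int_{-\infty}^{-\tau}|t|\psi''\,dt=\tau\psi'(-\tau)+\psi(-\tau)\lesssim\psi(-\tau/2)$, and then blocking $m\in(2^\ell,2^{\ell+1}]$) does convert the $(\log(-\log r))^{-(1+\delta)}$ modulus of continuity into $\int\sqrt{\psi''}<\infty$ for all $\delta>0$, exactly where the naive one--layer version fails for $\delta\le 1$. The paper instead proceeds globally: it first proves $\rho_{x_0}\in W^{1,2}$ (a Demailly--Peternell--Schneider type averaging plus a cut-off argument showing $d\rho_{x_0}$ does not charge $E$), so that $df\wedge d^cf\le T$ holds weakly across $E$, and then evaluates $f(x)=d_\omega(x,x_0)$ \emph{pointwise} via the approximate pluricomplex Green function ($\delta_x$ appears in $(\omega_X+dd^cg_x)^n$), reducing everything to integrations by parts weighted by $\chi''\circ g_x$. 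That representation formula is precisely the device that turns averaged information into a bound at the given point $x$.

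This is where your write-up has a genuine gap. Your excursions are centred on the divisor ($z_1=0$), so the averaged estimate only says that a set of arguments $\alpha$ of measure $\ge\pi$ carries rays of length $\le 2C_\delta$; to use one of them you must first move $p$ from argument $\arg p_1$ to a good argument $\alpha$. The circular arc at radius $r'$ with $\psi''(\log r')$ small does not close this: you still need the radial segment at the \emph{fixed} argument $\arg p_1$ from $|p_1|$ to $r'$, whose length $\int_{|p_1|}^{r'}\sqrt{2u_{1\bar1}(re^{i\arg p_1},p')}\,dr$ is governed by pointwise values of the metric that none of your averages control. This is not cosmetic: a conformal metric of finite area (indeed with all circular averages controlled) can place a single point at infinite distance from the boundary --- e.g.\ $|z|^{-2}(\log|z|)^{-2}|dz|^2$ near $0$ --- so ``positive measure of good rays'' plus continuity of $d_\omega(\cdot,x_0)$ cannot by itself yield a bound at $p$. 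Fortunately the fix stays entirely within your framework: recentre the excursion at $p$ itself. Since $m_\varphi$ is a \emph{global} modulus of continuity, the circular means $\tilde M(r)$ of $u$ over $\{|z_1-p_1|=r,\,z'=p'\}$ give a convex $\tilde\psi(t)=\tilde M(e^t)-u(p)\ge0$ with $\tilde\psi(t)\le m_u(e^t)$, and your two--layer estimate bounds the \emph{average over all rays emanating from $p_1$} out to $|z_1-p_1|=\eta$ by the same $C_\delta$, uniformly in $p$. At least half of these rays (all but one of which avoid $z_1=0$) have length $\le 2C_\delta$ and end at $|z_1|\ge\eta-|p_1|\ge\eta/2$; iterating over $z_1,\dots,z_k$ lands in a fixed compact subset of $X^\circ$, where $d_\omega(\cdot,x_0)$ is bounded. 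With that modification --- which also removes the angular-alignment bookkeeping altogether --- your proof is complete.
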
 

 \smallskip
 
  In connection with the Minimal Model Program, singular K\"ahler-Einstein metrics
 $\omega_{KE}$ have been constructed in \cite{EGZ09,BBEGZ19}. These are 
 K\"ahler forms on the regular locus $V_{\rm reg}$ of 
 a K\"ahler variety $V$, whose local potentials $\f_{\rm KE}$ are  bounded
 near the singular locus $V_{\rm sing}$.
  Our method applies equally well to this singular context.
 Indeed we finally show that the finiteness of the diameter of
 these metrics follows from the conjectural  
 H\"older continuity of their Monge-Amp\`ere potentials and Theorem \ref{thmC}.\footnote{Very recently, Guo-Phong-Song-Sturm \cite{GPSS23} announced a proof of the finiteness of the diameter of singular Kähler-Einstein metrics relying on a new Sobolev inequality leading to an almost euclidean non-collapsing estimate. } 
 
 \begin{bigcoro} \label{corD}
If the K\"ahler-Einstein potentials $\f_{\rm KE}$ are H\"older continuous on $V$,
 then  \[{\rm diam}(V_{\rm reg},\omega_{\rm KE})<+\infty.\]
\end{bigcoro}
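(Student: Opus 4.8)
The plan is to deduce Corollary~\ref{corD} from Theorem~\ref{thmC} by pulling everything back to a resolution of singularities. Recall that in the setting of \cite{EGZ09,BBEGZ19} the variety $V$ is compact and normal, $\omega_V$ denotes a fixed reference Kähler form on $V$ (so that, in local embeddings, $\omega_V=dd^c u$ with $u$ the restriction of a smooth strictly plurisubharmonic function), and $\omega_{\rm KE}=\omega_V+dd^c\phi$ globally on $V$ with $\phi\in\PSH(V,\omega_V)\cap L^\infty(V)$, the current $\omega_{\rm KE}$ being a genuine smooth Kähler metric on $V_{\rm reg}$. Since on any coordinate chart a local potential $\varphi_{\rm KE}$ of $\omega_{\rm KE}$ differs from $\phi$ by a smooth (hence locally Lipschitz) function, the hypothesis of the corollary is equivalent to the Hölder continuity of $\phi$ on $V$: there are $\alpha\in(0,1]$ and $C>0$ with $|\phi(x)-\phi(y)|\le C\,d_V(x,y)^{\alpha}$, where $d_V$ is the distance attached to a reference Riemannian metric on $V$.

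First I would fix, by Hironaka, a log resolution $\pi\colon X\to V$ which is an isomorphism over $V_{\rm reg}$ and such that $E:=\pi^{-1}(V_{\rm sing})$, taken with reduced structure, is a divisor with simple normal crossings; put $X^\circ:=X\setminus E$, so that $\pi$ restricts to a biholomorphism between $X^\circ$ and $V_{\rm reg}$. Next I would transport the data: set $\theta:=\pi^*\omega_V$, a smooth semipositive closed $(1,1)$-form on $X$; set $\varphi:=\phi\circ\pi\in\PSH(X,\theta)\cap L^\infty(X)$; and let $T:=\theta+dd^c\varphi$, a closed positive $(1,1)$-current on $X$, namely the pullback $\pi^*\omega_{\rm KE}$. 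Since $\pi$ is a biholomorphism from $X^\circ$ onto $V_{\rm reg}$ and $\omega_{\rm KE}$ is a smooth Kähler metric there, $\omega:=T|_{X^\circ}=\pi^*\big(\omega_{\rm KE}|_{V_{\rm reg}}\big)$ is a Kähler form on $X^\circ$, and $\pi|_{X^\circ}$ is a Riemannian isometry of $(X^\circ,\omega)$ onto $(V_{\rm reg},\omega_{\rm KE})$; in particular these two spaces have the same diameter.

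It then remains to verify the regularity hypothesis of Theorem~\ref{thmC} for $(X,\omega_X)$, the SNC divisor $E$, and $T=\theta+dd^c\varphi$, where $\omega_X$ is some fixed auxiliary Kähler form on $X$. The morphism $\pi$ is a smooth map between compact manifolds, hence Lipschitz: $d_V(\pi(x),\pi(y))\le L\,d_{\omega_X}(x,y)$. Together with the Hölder bound on $\phi$ this yields $|\varphi(x)-\varphi(y)|\le C L^{\alpha}\,d_{\omega_X}(x,y)^{\alpha}$, so the modulus of continuity of $\varphi$ obeys $m_\varphi(r)\le C'r^{\alpha}$. As $r\to 0^+$ one has $r^{\alpha}=o\big((\log(-\log r))^{-1-\delta}\big)$ for every $\delta>0$, so in particular $m_\varphi(r)\le C''(\log(-\log r))^{-1-\delta}$ for all small $r$, which is what Theorem~\ref{thmC} requires. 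That theorem then gives $\mathrm{diam}(X^\circ,\omega)<+\infty$, and the isometry of the previous paragraph upgrades this to $\mathrm{diam}(V_{\rm reg},\omega_{\rm KE})<+\infty$.

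I do not expect a serious obstacle: the corollary is genuinely a consequence of Theorem~\ref{thmC}, and a Hölder modulus is enormously stronger than the double-logarithmic modulus that the theorem needs. The only points demanding a little care are bookkeeping ones: arranging the resolution so that it is an isomorphism over $V_{\rm reg}$ with simple normal crossings exceptional locus; passing from the a priori only local potentials $\varphi_{\rm KE}$ to the global $\omega_V$-plurisubharmonic potential $\phi$ that actually inherits the Hölder bound; and recording that $\pi^*\omega_V$ is smooth on $X$ in the sense needed, which is built into the very definition of a Kähler form on the normal variety $V$.
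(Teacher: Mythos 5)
Your argument is correct and is essentially the paper's own proof: pull back by a log resolution $\pi\colon X\to V$, observe that $\pi^*\varphi_{\rm KE}$ is Hölder continuous with respect to $d_{\omega_X}$ because $\pi$ is Lipschitz, and apply Theorem~\ref{thmC} (whose double-logarithmic modulus hypothesis is far weaker than Hölder) to $T=\pi^*\omega_{\rm KE}$. The only point worth recording explicitly, which the paper handles via Proposition~\ref{pro:Loja}, is that Hölder continuity on $V$ is the same notion whether measured in the intrinsic distance $d_V$ or in the ambient one, so the hypothesis transfers unambiguously.
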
 

As we explain in Section \ref{sec:Loja}, H\"older continuity is the best
regularity that makes sense intrinsically in this context.
It is known to hold on smooth manifolds \cite{Kol08,DDGHKZ14},
as well as in some singular settings \cite{HS,CS22}.
%We actually provide a much finer result (Theorem \ref{thm:diameter general}), which covers Theorem \ref{thmB} and Theorem \ref{thmC} at once.

A new difficulty that occurs in the singular case is that we need some coarse control of the distance function near the singularities. This is provided by  a generalization of 
a classical $L^2$-integrability result of Demailly-Peternell-Schneider (see Lemma \ref{dist L2}).

 \smallskip
 
 \noindent {\it Method of proof and comparison with other works.}
 The starting point of the proof is the the observation made by Y.Li in \cite{Li20} 
 that the distance function $f(\cdot)=d_{\omega}(x,\cdot)$ is 
 $1$-Lipschitz, hence $0 \leq df \wedge d^c f \leq \omega$.
Assuming an $L^p$ bound on the densities $f_{\omega}$, 
%and a control of  Poincaré constants of small balls,  
Y.Li then uses H\"older regularity of the Monge-Amp\`ere potentials \cite{DDGHKZ14}
to  establish a uniform upper bound on diameters
(see \cite[Corollary 4.2]{Li20} and Proposition \ref{pro:diameterYLi}).

Guo-Phong-Song-Sturm develop in \cite{GPSS22} a systematic study of the fine properties of the
Laplace-Green function $G_{\omega}$ of $\omega$. 
Using the representation formula for $G_{\omega}$, the inequality $df \wedge d^c f \leq \omega$, and ingenious comparisons with various solutions of Monge-Amp\`ere equations associated
to $\omega$, they obtain uniform upper bounds on diameters, as well as non-collapsing results.
This requires them to impose a uniform non-vanishing condition on the densities $f_{\omega} \geq \gamma$, but the range of K\"ahler forms considered is large.

 Our proof follows an analogous path, using  
a known ambient Monge-Amp\`ere Green function rather than the detailed study of $G_{\omega}$.
A drawback is that we need fine estimates on the trace of $\omega$,
which require a control on the modulus of continuity of its Monge-Amp\`ere potentials;
the latter is only known when the reference cohomology class is K\"ahler.
An advantage is that we can directly deal with singular situations, while 
it is usually quite difficult to obtain global metric information by approximation.

  \begin{ackn} 
This work has benefited from State aid managed by the  ANR-11-LABX-0040, in connection with the research project HERMETIC, as well as by the ANR projects KARMAPOLIS and PARAPLUI and the
Institut Universitaire de France.
\end{ackn}

 \section{Modulus of continuity of Monge-Amp\`ere potentials}
 
 In the whole article we let $(X,\omega_X)$ denote a compact K\"ahler manifold of complex dimension $n$.
 We set $d=\partial+\overline{\partial}$ and $d^c=\frac{i}{2\pi}(\partial-\overline{\partial})$ 
 %these are real operators normalized 
 so that
$dd^c =\frac{i}{\pi}\partial\overline{\partial}$.

 \subsection{Quasi-plurisubharmonic functions}
 
 Recall that a function is quasi-plurisub\-harmonic if it is locally given as the sum of  a smooth and a psh function.   
% Fix $\omega$ a smooth closed $(1,1)$-form on $X$.
 Quasi-psh functions
$\f:X \rightarrow \R \cup \{-\infty\}$ satisfying
$
\omega_X+dd^c \f \ge 0
$
in the weak sense of currents are called $\omega_X$-psh functions.

\begin{defi}
We let $\PSH(X,\omega_X)$ denote the set of all $\omega$-plurisubharmonic functions which are not identically $-\infty$.  
\end{defi}

The set $\PSH(X,\omega_X)$ is a closed subset of $L^1(X)$, 
%when endowed with 
for the $L^1$-topology. 

\smallskip

Demailly has produced various methods of regularization of quasi-psh functions.
 We recall one, together with precise estimates on the loss of positivity
 along the smoothing.
  Consider the exponential mapping with respect to the Riemannian metric induced from $\omega$. It is defined on the tangent space of a given
point $z\in X$
$$
\exp_z: T_zX\ni\zeta\mapsto \exp_z(\zeta)\in X,
$$
 by $\exp_z(\zeta)=\gamma(1)$ where $\gamma$ being the
geodesic starting from $z$ with initial velocity
$\gamma'(0)=\zeta$. Given any function $u \in \PSH(X,\omega_X)$, we define its
$\tau$-regularization  
\begin{equation}\label{phie}
\rho_\tau u(z)=\frac{1}{\tau ^{2n}}\int_{\zeta\in T_{z}X}
u(\exp_z(\zeta))\rho\Big(\frac{|\zeta|^2_{\omega_{X,z} }}{\tau ^2}\Big)\,
dV_{\omega_{X,z}}(\zeta),\ \tau>0.
\end{equation}
 Here $\rho$ is a smoothing kernel,
$|\zeta|^2_{\omega_{X,z}}$\ stands for
$\sum_{i,j=1}^ng_{i\bar{j}}(z)\zeta_i\bar{\zeta}_j$, and
$dV_{\omega_{X,z}}(\zeta)$\ is the induced measure
$ \omega_{X,z}^n/n!$.

While $\rho_{\tau}u$  already provides a quasi-psh regularization of $u$,
the loss of positivity in $dd^c \rho_{\tau}u$ is too large for applications.
Demailly combines this smoothing   with
a technique introduced by Kiselman, 
setting $U(z,w):=\rho_\tau u(z)$ for $w\in \mathbb C,\ |w|=\tau$
and considering
\begin{equation}\label{kisleg}
u_{c,\tau}(z):=\inf _{0\le t\le \tau }\Big[U(z,t)+Kt-K\tau-c
\log\Big(\frac t{\tau}\Big)\Big].
\end{equation}

\begin{lem} \cite{BD} \label{lem:regDem}
There exists $K>0$ such that the
function $U(z,t)+Kt$ is increasing in $t$ and one has the following
estimate,
\begin{equation}\label{hessian}
\omega_X+dd^c u_{c,\tau}\ge -(A  c +K\tau)\,
\omega_X,
\end{equation}
where $A \ge 0$ is such that $\Theta(T_X, \omega_X)\ge -A \om_X\otimes \mathrm{Id}_{T_X}$.
%(lower bound on the bisectional curvature of $\omega_X$).
\end{lem}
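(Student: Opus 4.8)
The plan is to follow Demailly's two-step regularization procedure \cite{BD}: first mollify $u$ by the geodesic convolution \eqref{phie}, then apply the Kiselman attenuation \eqref{kisleg} to restore a good lower bound on the complex Hessian. Recall that $u\not\equiv-\infty$ by definition of $\PSH(X,\omega_X)$, so that $\rho_\tau u$ is a well-defined smooth function of $z$ for every $\tau>0$.

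\emph{Monotonicity of $U(z,t)+Kt$.} Fix $z\in X$ and rescale $\zeta=t\eta$ in \eqref{phie} to write $\rho_t u(z)=\int_{T_zX}u(\exp_z(t\eta))\,\rho(|\eta|^2)\,dV(\eta)$, a $\rho$-weighted average of $u$ over a geodesic ball of radius $O(t)$. Expressing this as a radial integral of geodesic-sphere averages $\bar u(z,s)$, and writing $u=v-g$ on a small coordinate ball around $z$ with $v$ plurisubharmonic and $g$ a smooth local potential of $\omega_X$ whose $\mathcal C^2$-norm is bounded by a constant depending only on $(X,\omega_X)$, the sub-mean value inequality for $v$ together with the expansion $\bar g(z,s)=g(z)+O(s^2)$ yield $\partial_s\bar u(z,s)\ge -Ks$ for $s$ small, hence $\partial_t\rho_t u(z)\ge -K$ for a uniform $K=K(X,\omega_X,\rho)$. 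Thus $t\mapsto U(z,t)+Kt$ is nondecreasing; letting $t\downarrow 0$ shows in addition that $\rho_t u+Kt$ decreases pointwise to $u$.

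\emph{Hessian estimate and attenuation.} View the family on the product: set $V(z,w):=U(z,|w|)+K|w|$ on $X\times\{0<|w|<\tau\}$, which depends on $w$ only through $|w|$. The key input is a lower bound for the complex Hessian of $V$ in $(z,w)$. Differentiating \eqref{phie} twice, the subtlety is that $\exp_z$ is not holomorphic, so that $u\circ\exp_z$ need not be plurisubharmonic; working in K\"ahler normal coordinates — where $\exp_z$ osculates the linear chart to second order, the antiholomorphic part of $\zeta\mapsto\exp_z(\zeta)$ being $O(|\zeta|^2)$ by the K\"ahler condition — and integrating by parts in $\zeta$ to move derivatives off $u$, one turns the bad terms into controlled ones and, using the $\omega_X$-plurisubharmonicity of $u$ together with the curvature bound $\Theta(T_X,\omega_X)\ge -A\,\omega_X\otimes\mathrm{Id}_{T_X}$, obtains $p^*\omega_X+dd^c_{(z,w)}V\ge -\lambda(z,|w|)\,p^*\omega_X$, where $p$ is the projection to $X$ and $\lambda\ge 0$ is $O(\tau)$ for $|w|$ bounded away from $0$ but may degenerate as $|w|\to 0$. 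Now, up to the additive constant $-K\tau+c\log\tau$, the function $u_{c,\tau}$ of \eqref{kisleg} is precisely $\inf_w\big(V(z,w)-c\log|w|\big)$; the weight $-c\log|w|$ is harmonic on $\{w\ne 0\}$ and serves to confine the infimum to the range $|w|\gtrsim c$, where, at the cost of an extra term of size $Ac$, the degeneracy of $\lambda$ near $w=0$ is absorbed — the familiar effect of attenuating singularities of size $\le c$ in curvature $-A$. Since $V-c\log|w|$ is independent of $\arg w$, Kiselman's minimum principle, applied with the reference form $p^*\omega_X$ pulled back from $X$, shows that the infimum is $((1+Ac+K\tau)\omega_X)$-plurisubharmonic, i.e. $\omega_X+dd^c u_{c,\tau}\ge -(Ac+K\tau)\,\omega_X$.

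The crux is the Hessian estimate for $V$: because $\exp$ is not holomorphic one cannot simply push $dd^c u\ge-\omega_X$ through the convolution, and the bookkeeping of the error terms produced by the integrations by parts — which must output exactly the curvature constant $A$ paired with the attenuation scale $c$, plus a separate $O(\tau)$ from the geodesic smoothing — has to be done carefully, following Demailly's original computation \cite{BD}. Once it is available, the monotonicity statement and the application of Kiselman's minimum principle are routine.
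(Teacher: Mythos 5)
Your proposal follows exactly the route the paper relies on: Demailly's geodesic mollification followed by Kiselman's attenuation and minimum principle, with the decisive Hessian estimate for $V(z,w)$ on the product deferred to \cite{BD} --- which is precisely what the paper does, since the lemma is stated as a citation of \cite{BD}. The one concrete point your sketch glosses over is the paper's accompanying remark that \cite[Lemma 1.12]{BD} actually gives the finer bound $K\tau^2$, and that the stated $K\tau$ version comes from \cite[Equation (1.10)]{BD} by absorbing the mixed terms $|w|\,|dz|\,|dw|$ via Cauchy--Schwarz at the cost of one power of $\tau$; this is harmless here since the weaker bound suffices for the applications.
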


 \cite[Lemma 1.12]{BD} claims a slightly finer control,
with $\tau^2$ instead of $\tau$.  
This requires to efficiently control the mixed terms
$|w| |dz\|dw|$ in \cite[Equation (1.10)]{BD}. 
These   can 
be absorbed by Cauchy-Schwarz inequality at the cost of losing one power of $\tau$,
which is how  \cite[Equation (1.10)]{BD}  allows one to
obtain \eqref{hessian}. The latter is sufficient to deal with sub-Lipschitz regularity of 
Monge-Amp\`ere potentials, as in Section \ref{CK} to follow.
A more precise control has been obtained by E.DiNezza and S.Trapani in \cite{DNT21}.

   \subsection{Diameter control by the modulus of continuity}

   \subsubsection{Dini-Campanato spaces}
  
%  We first recall the following classical notion.
  
%  \begin{defi}
%  A Lebesgue measurable function $m:\R^+ \rightarrow \R^+$ satisfies the Dini  
%  (integrability) condition if $\int_0 \frac{m(t)}{t} dt <+\infty$.
%  \end{defi}

 Campanato-Morrey spaces 
 %have been introduced   in \cite{Camp64,Mor52}. They   
 provide a useful integral interpretation 
of H\"older continuity by means of uniform estimates
on mean oscillations of the function.
This approach has been extended to  more general modulus of continuity,
%by Spanne \cite{Span65}, and by many authors since then.
we shall need the following generalization due to Kovats \cite{Kov99}.

\begin{lem} \label{lem:campanato}
Let $(X,\omega_X)$ be a K\"ahler manifold of complex dimension $n$. We let 
$g$ denote the associated Riemannian metric.
Let $m:\R^+ \rightarrow \R^+$ be  an increasing subadditive continuous function such that 
$m(0)=0$ which satisfies the Dini condition
$
m_1(r)=\int_0^r \frac{m(t)}{t} dt<\infty.
$

Let $u:X \rightarrow \R$ be a measurable function. 
Assume that for each compact set $K \subset X$ there exists 
$C_K>0$ such that for all $p \in K$, for all $0 < r < \mathrm{inj}(X,g)$, 
%\marginpar{Ici le rayon d'injectivité de $X$ pourrait être zéro. Il faut probablement remplacer par un voisinage de $K$}
$$
\int_{B_g(p,r)} \left| u(x)-\frac{1}{{\rm Vol }B_g(x,r)}\int_{B_g(x,r)} u \, dV_g \right|^2 dV_g(x) 
\le C_K r^{2n} m^2(r).
$$
Then $u$ is continuous and its modulus of continuity satisfies $m_u(r)=O(m_1(r))$.
\end{lem}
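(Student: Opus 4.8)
The plan is to run the classical Campanato--Morrey telescoping argument, tuned to the Dini modulus $m$. Throughout, write $A_ru(x):=\frac{1}{\vol B_g(x,r)}\int_{B_g(x,r)}u\,dV_g$ for the ball--average of $u$ at scale $r$; implicit in the hypothesis is that $u\in L^1_{\mathrm{loc}}$ (hence $u\in L^2_{\mathrm{loc}}$), and the hypothesis says exactly that $\|u-A_ru\|_{L^2(B_g(p,r))}\le\sqrt{C_K}\,r^{\,n}m(r)$ for $p\in K$ and $0<r<\mathrm{inj}(X,g)$. I will use without further comment that, $g$ being smooth, $\vol B_g(x,\rho)\asymp\rho^{2n}$ uniformly for $x$ in a fixed compact set and $\rho$ small, and any ball $B_g(x,r)$ is covered by a number of balls of radius $r/2$ bounded in terms of $n$ alone; and that, $m$ being subadditive with $m(0)=0$, one has $m(\lambda r)\le 2\lambda\,m(r)$ for $\lambda\ge1$, $m(r)\ge c_0 r$ for $r$ small (so $r^2\lesssim m(r)$), while $m_1$ is doubling and $m\lesssim m_1$. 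The first real step is to upgrade the hypothesis --- which measures the oscillation of $u$ against the average over the ball centred at the \emph{running} point --- to the \emph{fixed--centre} Campanato bound
\[
\|u-A_ru(x)\|_{L^2(B_g(x,r))}\ \le\ C\,r^{\,n}\,m(r),\qquad x\in K',\ 0<r<r_0,
\]
over a slightly larger compact $K'$. This is the step I expect to be the genuine obstacle, and I postpone its discussion.

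Granted the fixed--centre bound, the rest is routine. A dyadic scale comparison comes first: since $A_ru(x)-A_{r/2}u(x)=\frac{1}{\vol B_g(x,r/2)}\int_{B_g(x,r/2)}\big(A_ru(x)-u\big)$, Cauchy--Schwarz, the volume comparison, and the fixed--centre bound give $|A_ru(x)-A_{r/2}u(x)|\le C\,m(r)$. Telescoping over dyadic scales and using the Dini condition, $|A_ru(x)-A_{2^{-k}r}u(x)|\le C\sum_{j\ge0}m(2^{-j}r)\le C'm_1(r)$; hence $A_\rho u(x)$ converges, uniformly for $x$ in compacts as $\rho\to0$, to a continuous function $u^{*}$ with $|u^{*}(x)-A_ru(x)|\le C'm_1(r)$, and $u^{*}=u$ a.e. by Lebesgue differentiation. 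For the modulus of continuity, fix $x,y$ with $\rho:=\dist_g(x,y)$ small and put $E:=B_g(x,\rho)\cap B_g(y,\rho)$, of volume $\gtrsim\rho^{2n}$; then $A_\rho u(x)-\frac{1}{\vol E}\int_E u=\frac{1}{\vol E}\int_E\big(A_\rho u(x)-u\big)$, and Cauchy--Schwarz with the fixed--centre bound at scale $\rho$ (using $E\subset B_g(x,\rho)$) yield $\big|A_\rho u(x)-\tfrac{1}{\vol E}\int_E u\big|\le C\,m(\rho)$; the same holds with $y$, whence $|A_\rho u(x)-A_\rho u(y)|\le 2C\,m(\rho)$. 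Combining, $|u^{*}(x)-u^{*}(y)|\le 2C'm_1(\rho)+2C\,m(\rho)\le C''m_1\big(\dist_g(x,y)\big)$, i.e. $m_u=O(m_1)$.

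It remains to address the fixed--centre upgrade, the heart of the matter. Heuristically the sliding-- and fixed--centre oscillation quantities are equivalent --- if $u$ is $L^2$--close at scale $r$ to its $r$--scale average it barely oscillates below scale $r$, so the average cannot oscillate much more than $u$ --- but a direct comparison is circular: bounding $\int_{B_g(x,r)}|A_ru-A_ru(x)|^2$ reintroduces the fixed--centre quantity at scale $2r$ with a geometric constant $\ge1$, and the ensuing iteration does not close. Part of the picture is cheap: in a coordinate chart with the Euclidean metric, $A_\rho$ is convolution with a fixed profile, so in the decomposition $A_ru-A_{r/2}u=A_r(u-A_{r/2}u)+(A_rA_{r/2}-A_{r/2}A_r)u+A_{r/2}(A_ru-u)$ the commutator vanishes and the scale--jump bound $|A_ru(x)-A_{r/2}u(x)|\le C\,m(r)$ drops out of the covering form of the hypothesis at scales $r$ and $r/2$, the genuine Riemannian metric contributing only an $O(\rho^2)$ curvature correction to the volume density on a ball of radius $\rho$ (absorbed since $m(\rho)\gtrsim\rho$). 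Turning this into the full fixed--centre Campanato bound --- uniformly in the geometry and for merely measurable $u$, which is what also delivers the two--point control above --- is exactly the content of Kovats' Dini--Campanato embedding \cite{Kov99}, and that is the argument I would follow here. The remaining work is careful bookkeeping of the covering constants, so that the final constant depends only on $m$ and the compact set.
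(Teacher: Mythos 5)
The paper offers no proof of this lemma: it is quoted from Kovats \cite{Kov99}. So your write-up can only be measured against the standard Dini--Campanato argument, and the bulk of it matches that argument exactly. Granted the fixed-centre bound $\int_{B_g(x,r)}|u-A_ru(x)|^2\,dV_g\le C\,r^{2n}m(r)^2$, your dyadic telescoping, the comparison $\sum_{j\ge0}m(2^{-j}r)\asymp m_1(r)$ from subadditivity and the Dini condition, the identification of the uniform limit of $A_\rho u$ with $u$ at Lebesgue points, and the two-point estimate through $E=B_g(x,\rho)\cap B_g(y,\rho)$ are all correct; this is the standard proof.

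The defect sits precisely in the step you single out as the heart of the matter and then outsource. Kovats' embedding is the \emph{fixed-centre} (equivalently, best $L^2$-constant) statement; it does not contain a passage from the sliding-centre quantity $\int_{B_g(p,r)}|u(x)-A_ru(x)|^2\,dV_g(x)$ to the fixed-centre one, and no such passage exists with constants depending only on $C_K$ and $m$. Indeed, on $\C^n$ with the flat metric a real-affine function $u(x)=\langle a,x\rangle$ satisfies $A_ru(x)=u(x)$ identically, so the sliding-centre hypothesis holds with left-hand side zero for every $m$, while $m_u(r)=|a|\,r$; since subadditivity forces $m_1(r)\le c_m\,r$ near $0$ to fail only when $m$ is trivial, the conclusion would have to bound $|a|$ by a constant depending only on $C_K$ and $m$, which is absurd. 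So your circularity worries are justified, and the reference you lean on does not close the gap. The resolution is that the inner average in the statement is best read as the average over the \emph{fixed} ball $B_g(p,r)$: in the one place the lemma is applied (the proof of Proposition \ref{pro:diameterYLi}), the hypothesis is produced by the Poincar\'e inequality, which yields exactly the fixed-centre Campanato estimate. Under that reading your "upgrade" step is vacuous and your argument is complete; as written, it attributes to \cite{Kov99} an implication that is not there and is, quantitatively, false.
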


\subsubsection{Domination by the ambient topology}

The following local diameter control is a slight generalization of 
\cite[Corollary 4.2]{Li20}, as observed by \cite[Theorem~2]{GPTW21}.

\begin{prop} \label{pro:diameterYLi}
Let $(X,\omega_X)$ be a  K\"ahler manifold of complex dimension $n \in \N^*$.
Assume $\f \in \PSH(X,\omega_X)$ is  continuous 
in an open set $\Omega \subset X$,
 with modulus of continuity
$m_{\f}$ which satisfies the condition
$
m_1(r):=\int_0^r \frac{\sqrt{m_{\f}(t)}}{t} dt <+\infty.
$
If  $\omega:=\omega_X+dd^c \f$ is a K\"ahler form in $\Omega$,
then for each compact set $K \subset \Omega$ there exists $C_K>0$ such that
for all $p,q \in \Omega \cap K$,
$$
d_{\omega}(p,q) \le C_K \, m_1 \circ d_{\omega_X}(p,q).
$$
\end{prop}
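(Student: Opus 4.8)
The plan is to estimate $d_\omega(p,q)$ by integrating the Kähler metric $\omega$ along a short $\omega_X$-geodesic joining $p$ and $q$, and to control $\omega$ along this path via the trace inequality $\tr_{\omega_X}\omega \le $ something involving the regularity of $\f$. More precisely, since the statement is local I will work in a coordinate chart around a point of $K$, identified with a ball, where $\omega_X$ is uniformly comparable to the Euclidean metric. Fix $p,q \in \Omega \cap K$ with $r:=d_{\omega_X}(p,q)$ small, and let $\gamma$ be a minimizing $\omega_X$-geodesic from $p$ to $q$; then $d_\omega(p,q) \le \int_0^1 |\dot\gamma(s)|_\omega\, ds \le \int_0^1 \sqrt{\tr_{\omega_X}\omega(\gamma(s))}\,|\dot\gamma(s)|_{\omega_X}\, ds$. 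So the whole point is a pointwise upper bound on $\tr_{\omega_X}\omega$ at points near $\gamma$, in terms of $m_\f$ evaluated at a suitable scale.

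The key step is the following: for $z$ in a slightly smaller compact set and $t>0$ small, the convexity of $\f$ along complex lines (coming from $\omega_X + dd^c\f \ge 0$, modulo a smooth error from $\omega_X$) gives a sub-mean-value inequality, and comparing $\f$ on a sphere of radius $t$ around $z$ with $\f(z)$ produces a bound of the shape
\[
\Delta_{\rm eucl}\f(z) \;\lesssim\; \frac{1}{t^2}\,\bigl(\,\text{oscillation of }\f\text{ at scale }t\,\bigr) \;+\; 1 \;\lesssim\; \frac{m_\f(t)}{t^2} + 1
\]
in the sense of distributions, where I will use the regularization $\rho_\tau \f$ from \eqref{phie} (or an ad hoc mollification in the chart) so that the computation is legitimate, together with Lemma \ref{lem:regDem} to control the loss of positivity. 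Since $\tr_{\omega_X}\omega$ is comparable to $n + \Delta_{\rm eucl}\f$ up to uniform constants, optimizing in $t$ — the natural choice being $t \asymp d_{\omega_X}(z,\gamma)$ or simply $t\asymp r$ along the path — yields $\tr_{\omega_X}\omega(\gamma(s)) \lesssim m_\f(r)/r^2 + 1$ uniformly along $\gamma$. Plugging this into the length estimate gives $d_\omega(p,q) \lesssim r\cdot\sqrt{m_\f(r)/r^2+1} \lesssim \sqrt{m_\f(r)}$ for $r$ small, which is already the $k=1$ case; to get the full Dini statement $d_\omega(p,q)\le C_K\, m_1\circ d_{\omega_X}(p,q)$ one refines the argument by chopping $\gamma$ into dyadic pieces at scales $2^{-j}$, bounding the contribution of the annulus at distance $\asymp 2^{-j}$ from the endpoints by $\sqrt{m_\f(2^{-j})}$, and summing: $\sum_j \sqrt{m_\f(2^{-j})} \asymp \int_0^r \frac{\sqrt{m_\f(t)}}{t}\,dt = m_1(r)$, which is finite by hypothesis.

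The main obstacle is the passage from the modulus of continuity of $\f$ to a pointwise (distributional) bound on its Laplacian at a given scale: one cannot differentiate a merely continuous function, so this must be done through mollification, and one has to check that the smoothing does not destroy the quasi-plurisubharmonicity in a way that ruins the trace bound — this is exactly where Demailly's regularization and Lemma \ref{lem:regDem} enter, since the error $-(Ac+K\tau)\omega_X$ there is absorbed into the harmless "$+1$" term once $c,\tau$ are taken of size $\asymp r$. A secondary technical point is that the $\omega_X$-geodesic $\gamma$ must stay inside $\Omega$ where $\omega$ is genuinely Kähler; this is guaranteed for $r$ small since $K$ is compactly contained in $\Omega$, and for $r$ bounded below the estimate is trivial after possibly enlarging $C_K$. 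I refer to \cite[Corollary 4.2]{Li20} and \cite[Theorem 2]{GPTW21} for the implementation of these dyadic summation and mollification arguments.
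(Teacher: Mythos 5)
Your plan hinges on the pointwise bound $\operatorname{tr}_{\omega_X}\omega(z)\lesssim m_\f(t)/t^{2}+1$, and this is where the argument breaks down: the modulus of continuity of a (quasi-)psh potential does \emph{not} control its Laplacian pointwise, only on average. What the sub-mean-value inequality actually gives is
\[
\frac{1}{t^{2n-2}}\int_{B_{\omega_X}(z,t)}\omega\wedge\omega_X^{n-1}\;\lesssim\; \frac{m_\f(2t)}{t^{2}}\,t^{2n-2}\cdot t^{2-2n}\cdot t^{2n-2}
\quad\text{i.e.}\quad
\int_{B_t}\operatorname{tr}_{\omega_X}\omega\,\omega_X^n\lesssim m_\f(t)\,t^{2n-2},
\]
an integral (Chern--Levine--Nirenberg type) estimate, not an $L^\infty$ one. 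A continuous, even Lipschitz, potential can have $dd^c\f$ concentrating enormous mass on tiny sets (think of a smoothing of $\max(\e\log|z_1|,-\e^{2})$: bounded oscillation, but the trace of the associated form blows up near a thin shell). Mollifying does not rescue the step: $\Delta(\rho_\tau\f)$ at a point is indeed an average of $\Delta\f$ and hence controlled, but then you are estimating the length of $\gamma$ in the \emph{regularized} metric, not in $\omega$, and comparing the two distances is precisely the problem you set out to solve. Worse, a fixed $\omega_X$-geodesic may pass straight through a region where $\operatorname{tr}_{\omega_X}\omega$ is huge, so $\int_\gamma|\dot\gamma|_\omega$ can be far larger than $d_\omega(p,q)$; one cannot certify an upper bound on the distance by testing a single competitor path whose $\omega$-length is not controlled.

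The paper's proof sidesteps both issues by working with the distance function $\rho(\cdot)=d_\omega(\cdot,p)$ itself, which is automatically $1$-Lipschitz for $\omega$, so that $|\nabla_{\omega_X}\rho|^{2}\le n\,\omega\wedge\omega_X^{n-1}/\omega_X^{n}$ pointwise; the continuity of $\f$ is then used only through the integral bound $\int_{B_r}|\nabla_{\omega_X}\rho|^{2}\omega_X^{n}\lesssim m_\f(r)\,r^{2n-2}$ obtained by integration by parts against a cut-off. Combined with the Poincar\'e inequality this places $\rho$ in a Dini--Campanato space with weight $\sqrt{m_\f}$, and Lemma~\ref{lem:campanato} converts that into the modulus of continuity $m_1(r)=\int_0^r t^{-1}\sqrt{m_\f(t)}\,dt$ for $\rho$ --- which is exactly where your dyadic sum $\sum_j\sqrt{m_\f(2^{-j})}$ reappears, but now legitimately, at the level of mean oscillations rather than pointwise traces. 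If you want to keep the spirit of your approach, you must replace the pointwise trace bound along one geodesic by an averaged statement over a family of paths or over balls; the Campanato route is the clean way to do that.
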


We let here $d_{\omega}$ denote the 
Riemannian distance associated to the Kähler form $\omega$.

\begin{proof}
Fix $p \in K$ and let $\rho: x \in \Omega \mapsto d_{\omega}(x,p) \in \R^+$ denote the distance
function with respect to the K\"ahler form $\omega=\omega_X+dd^c \f$.
The function $\rho$ is $1$-Lipschitz with respect to $d_{\omega}$ hence
$\nabla_{\omega} \rho$ is well defined almost everywhere with
$|\nabla_{\omega} \rho| \le 1$. We infer
$$
|\nabla_{\omega_X} \rho|^2 \le {\rm Tr}_{\omega_X}(\omega) |\nabla_{\omega} \rho|^2 \le n \frac{\omega \wedge \omega_X^{n-1}}{\omega_X^n}.
$$

Let $0 \le \chi \le 1$ be a cut off function such that 
$\chi \equiv 1$ in a neighborhood of $B_r:=B_{\omega_X}(p,r)$, ${\rm Supp}(\chi) \subset B_{2r}:=B_{\omega_X}(p,2r)$
and $dd^c \chi \le Cr^{-2} \omega_X$.
Note that $C=C_K$ can be chosen uniformly with respect to $p \in K$.
 It follows from Chern-Levine-Nirenberg inequalities that
$$
\int_{B_r} |\nabla_{\omega_X} \rho|^2 \omega_X^n 
\le  n \int_X \chi \omega \wedge \omega_X^{n-1} 
 \le  C r^{2n} +n \int_X [\f -\inf_{B_{2r}} \f ] dd^c \chi \wedge \omega_X^{n-1} 
 \le  C' m_{\f}(2r) r^{2n-2}.
$$
Since $m_{\f}(2r) \le 2 m_{\f}(r)$, it follows from Poincar\'e inequality that
 $$
 \int_{B_r} \left| \rho(x)-\frac{1}{{\rm Vol}(B_r(x))}\int_{B_r(x)} \rho \, \omega_X^n \right|^2 \omega_X(x)^n \le C_P r^2 \int_{B_r} |\nabla_{\omega_X} \rho|^2 \omega_X^n 
 \le C''  m_{\f}(r) r^{2n}.
 $$
 
 Of course $X$ may not satisfy a global Poincaré inequality but it   holds in a neighborhood of $K$. Thus $\rho$ belongs to a  generalized Morrey-Campanato space and Lemma \ref{lem:campanato}
applied with  $m=\sqrt{m_{\f}}$ ensures that for all $p,q \in K$,
$
d_{\omega}(p,q) \le  C m_1 \circ d_{\omega_X}(p,q).
$
\end{proof}

   \subsection{Condition (K)}
   \label{CK}
   
   In honor of the breakthrough result of S.Ko\l odziej \cite{Kol98}, we introduce the following notion.    
   \begin{defi}
   A positive Radon measure $\mu$ on $X$ satisfies \CK if there exists 
    $dV_X$  a  smooth volume form, $f \ge 0$  a Lebesgue-measurable function,
$w: \R^+ \rightarrow  [1,+\infty)$ 
 a convex increasing weight 
 s.t. $\mu=f dV_X$ with $\int_X w \circ f \, dV_X <+\infty$,
 where
 $$
 w(t)= t (\log t)^n (h \circ \log \circ \log (t+3))^n 
 \; \; \text{ and } \; \;
 \int^{+\infty} \frac{dt}{h(t)}<+\infty.
 $$
   \end{defi}

 Let $\omega_X$ be a K\"ahler form and let $\mu$ be a positive Radon measure 
 normalized so that $\mu(X)={\rm Vol}_{\omega_X}(X)$.
  If $\mu$ satisfies \CK, it has been proved by Kolodziej (see \cite[Theorem 2.5.2]{Kol98}) that 
  there exists a unique continuous $\omega$-psh function such that
  $$
  (\omega_X+dd^c \f)^n=\mu=fdV_X,
  $$
  up to an additive constant, together with a uniform a priori bound on ${\rm Osc}_X(\f)$.

  \smallskip

   We now establish   a precise  control of the modulus of continuity
  of the solution $\f$, when the density
  $f$ satisfies various integrability conditions.

\begin{thm} \label{thm:modulus1}
Let $(X,\omega_X)$ be a compact K\"ahler manifold of complex dimension $n \in \N^*$.
Let $\mu=f dV_X$ be a positive Radon measure which satisfies \CK
with weight $w$.
Let $\f \in \PSH(X,\omega_X) \cap {\mathcal C}^0(X)$ be a solution of the Monge-Amp\`ere equation
$$
(\omega_X+dd^c \f)^n =f dV_X.
$$
We let $m_{\f}$ denote the modulus of continuity of $\f$ and fix $\e>0$.

\smallskip

1) If $w(t) = t^{1+\e}$ then $m_{\f}(r) \le C_{\a} r^{\a}$,
for all $0< \a < \frac{2\e}{n(1+\e)+\e}$.
%i.e. $\f$ is H\"older continuous.

\medskip

2) If $w(t) = t (\log t)^{n+\e}$, then 
$m_{\f}(r) \le C (-\log r)^{-\e/n}$.

\medskip

3) If $w(t) = t (\log t)^n (\log \circ \log (t+3) )^{n+\e}$, then 
$m_{\f}(r) \le C (\log (-\log r))^{-\e/n}$.
\end{thm}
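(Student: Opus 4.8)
The plan is to follow Kolodziej's stability-type machinery, tracking constants carefully so as to convert the integrability gain encoded in the weight $w$ into a quantitative modulus of continuity for $\f$. The starting point is to compare $\f$ with its regularizations from Lemma \ref{lem:regDem}. Concretely, for small $\tau>0$ let $\f_{c,\tau}$ be the Demailly--Kiselman regularization attached to $u=\f$, which is smooth and satisfies $\omega_X+dd^c\f_{c,\tau}\ge -(Ac+K\tau)\,\omega_X$, while $\f_{c,\tau}\downarrow \f$ as $\tau\to 0$; moreover the sub-mean-value inequality gives $\f_{c,\tau}-\f \le C(c|\log\tau| + m_\f(\tau))$ near each point. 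The idea is then to estimate $\sup_X(\f_{c,\tau}-\f)$ from above, since this sup controls $m_\f$ once we optimize in $\tau$ and $c$: if we renormalize so $\psi:=\f_{c,\tau}/(1+Ac+K\tau)$ is genuinely $\omega_X$-psh, a comparison between $(\omega_X+dd^c\f)^n=f\,dV_X$ and $(\omega_X+dd^c\psi)^n$ via a Kolodziej-type $L^\infty$ estimate will bound $\sup_X(\psi-\f)$, hence $\sup_X(\f_{c,\tau}-\f)$, in terms of $\|(f)\cdot\chi_{\{\f<\psi-s\}}\|$ measured against the weight $w$.

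The heart of the matter is the quantitative $L^\infty$ estimate. I would invoke the version of Kolodziej's theorem adapted to a measure $\mu=f\,dV_X$ satisfying \CK: for a Borel set $K\subset X$ one has a capacity--volume inequality $\mu(K)\le F_w(\mathrm{Cap}_{\omega_X}(K))$, where $F_w$ is an explicit modulus determined by $w$ through the Orlicz--Hölder inequality (so $w(t)=t^{1+\e}$ gives $F_w(s)\sim s^{?}$-type power decay, $w(t)=t(\log t)^{n+\e}$ gives $F_w(s)\sim (\log\frac1s)^{-(n+\e)}$, and the iterated-log weight gives $F_w(s)\sim(\log\log\frac1s)^{-(n+\e)}$). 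Kolodziej's iteration lemma then turns such a bound on $F_w$ into a bound on the "size" $s$ where the comparison $\{\f<\psi - s\}$ becomes negligible, and the three regimes of $F_w$ produce respectively the Hölder, inverse-log, and inverse-iterated-log rates. The necessary capacity estimates and the iteration are all classical (Kolodziej \cite{Kol98}, see also \cite{DDGHKZ14}); the slightly delicate point is that the cohomology class here is fixed (it is just $[\omega_X]$), so no uniformity in the class is needed at this stage — that refinement belongs to the later Theorem \ref{thmB}-type statements.

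The second, more bookkeeping-heavy step is the optimization linking $\tau$, $c$, and the resulting control. From $\f(z)\le\f_{c,\tau}(z)\le \f(z)+\sup_X(\f_{c,\tau}-\f)$ and the reverse sub-mean inequality $\f_{c,\tau}(z)\ge \f(z') - m_\f(|z-z'|)-\ldots$ for $|z-z'|\lesssim\tau$, one extracts an inequality of the schematic form $m_\f(\tau)\le C\big(c|\log\tau| + F_w^{-1}\text{-type quantity}(c,\tau) \big)$; choosing $c=c(\tau)$ to balance the two terms (typically $c\sim 1/|\log\tau|$ up to logarithmic corrections) yields the three claimed bounds after elementary manipulation of $F_w$ in each case. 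I expect the main obstacle to be precisely this balancing in case 3): the iterated logarithm is so slowly varying that one must be careful that the loss of one power of $\tau$ in Lemma \ref{lem:regDem} (as discussed after that lemma) does not swamp the gain; checking that $m_\f(r)\le C(\log(-\log r))^{-\e/n}$ survives this loss is the place where the argument is tightest, and it is why the hypothesis is phrased with the extra exponent $\e$ rather than with a borderline Dini-type condition. Once this is in place, cases 1) and 2) follow by the same scheme with the more forgiving weights, giving the stated power and inverse-logarithmic rates.
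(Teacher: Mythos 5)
Your proposal follows essentially the same route as the paper's proof: Demailly--Kiselman regularization with the renormalization making $\psi_{c,\tau}$ genuinely $\omega_X$-psh, a capacity--volume inequality derived from H\"older--Young with the Orlicz weight $w$, a Kolodziej/EGZ-type stability estimate to bound $\sup_X(\psi_{c,\tau}-\f)$, and a final optimization in $\tau$, $\delta$ and $c(\tau)$ (including the lower bound on the infimum-realizing $t$ that converts the estimate into one on $\rho_\tau\f-\f$). You also correctly identify the two delicate points the paper flags, namely the weaker $O(\tau)$ loss of positivity in Lemma \ref{lem:regDem} and the tightness of the balancing in the iterated-log case 3).
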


The first item 
has been established by Kolodziej in \cite{Kol08} (see also \cite[Theorem A]{DDGHKZ14}).
The condition $\int_0 r^{-1} \sqrt{m_{\f}(r)} dr<+\infty$  is always satisfied in this case.

The second item has been addressed by Guo-Phong-Tong-Wang in \cite{GPTW21},
by using a new method introduced in \cite{GPT23}. 
%\marginpar{Isn't it $\ep>n$ instead?}
 The condition $\int_0 r^{-1} \sqrt{m_{\f}(r)} dr<+\infty$ is satisfied iff $\e>2n$, while
it   is never satisfied in the third case.

 \begin{proof}
 We provide a unified proof of these results,
 following the approach and notations of \cite[Theorem D]{DDGHKZ14}.
 Since 1) is treated there, we first treat 2).
% Let $\chi^*$ deniote the conjugate convex function of $\chi$.
For $w(t) = t (\log t)^{n+\e}$, it follows from H\"older-Young inequality 
that
$$
\mu=f dV_X \le C_0 \, {\rm Cap}_{\omega_X}^{1+\e/n}.
$$
  \cite[Proposition 2.6]{EGZ09} shows that if $\p$ is bounded and
 $\omega_X$-psh, then for all $0 < \delta <1$,
 \begin{equation} \label{eq:stab2}
  \|(\p-\f)_+\|_{L^{\infty}} \le \delta+C_0 \left[ {\rm Cap}_{\omega_X}\left( \f-\p <-\delta \right)  \right]^{\e/n^2},
 \end{equation}
where $(\p-\f)_+:=\max(\p-\f,0)$ denotes the positive part of $\p-\f$.
 Now
 \begin{eqnarray*}
  {\rm Cap}_{\omega_X}\left( \f-\p <-\delta \right) 
  &\le & C_1 \delta^{-n}  \| \1_{\{ \p-\f >\delta \}} \|_{L^{\chi^*}} \\
& \le & \frac{C_2}{\delta^n [-\log {\rm Vol}(\p-\f>\delta)]^{n+\e}} \\
&\le & \frac{C_3}{\delta^n \left[-\log \frac{\|(\p-\f)_+\|_{L^1}}{\delta} \right]^{n+\e}},
 \end{eqnarray*}
 where the latter inequality follows from Chebyshev inequality.
 
 \smallskip
 
We set $\Phi(z,w):=\rho_\tau \f(z)$ for $w\in \mathbb C,\ |w|=\tau$
and consider
$$
\f_{c,\tau}(z):=\inf _{0\le t\le \tau }\Big[\Phi(z,t)+Kt-K\tau-c
\log\Big(\frac t{\tau}\Big)\Big].
$$
Recall from Lemma \ref{lem:regDem} that
$
\omega_X+dd^c \f_{c,\tau}\ge -(A  c +K\tau)\,
\omega_X,
$
thus the function
$$
\psi_{c,\tau} :=   \left(1 - [A c+ K \tau] \right) \f_{c,\tau} 
$$
is $\omega_X$-plurisubharmonic on $X$. 
In what follows we set 
$
\psi_\tau := \psi_{c(\tau),\tau},
$
where $c=c(\tau)$ is chosen below so that $c(\tau)=o(1)$ and $\tau=o(c(\tau))$.

Observe  that $\f_{c,\tau} \le \rho_t \f$.
Since $\f$ is bounded we can shift by an additive constant and assume  that $\f \ge 1$.
For $\tau$ small enough this ensures that $\f_{c,\tau} \ge 0$,
hence
$$
\psi_{\tau}= \f_{c,\tau} - [A \tau+ K \tau] \f_{c,\tau}
 \le \f_{c,\tau}  \le \rho_{\tau} \f.
$$

Recall that  $\|\f-\rho_{\tau}\f\|_{L^1}=O(\tau^2)$.
Since
$
\|(\psi_{\tau}-\f)_+\|_{L^1} \le \|\rho_{\tau}\f -\f\|_{L^1}
$
we infer
 $$
 \| (\psi_{\tau}-\f)_+\|_{L^{\infty}}  \le \delta+\frac{C_4}{\delta^{\frac{\e}{n}} \left[-\log \tau^2 \delta^{-1} \right]^{\frac{\e(\e+n)}{n^2}}}.
 $$
 We optimize the choices of $\tau, \delta$ by choosing
 $
 \tau^2=\delta\exp \left( - C_4\delta^{-n/\e} \right),
 $
 which leads to 
 $$
  \|(\psi_{\tau}-\f)_+\|_{L^{\infty}} \le \frac{C_5}{(-\log \tau)^{\e/n}}.
 $$

We choose $c(\tau)=(-\log \tau)^{-\e/n}$.
  The previous inequality yields
 $$
  \p_{\tau}  \le \f+\frac{C_6}{(-\log \tau)^{\e/n}}
  =\f+C_6 c(\tau).
 $$

 This provides  a uniform lower bound on $t=t(z)$ which realizes the infimum in the definition
 of $\f_{\tau,\tau}$. Recall indeed that $\rho_t \f+Kt \ge \f$, hence
 \begin{eqnarray*}
 \f(z)+C_6 c(\tau) &\ge &  \rho_t\f(z)+Kt-K\tau- c(\tau) \log\Big(\frac t{\tau}\Big)\\
  &\ge& \f(z)-K \tau -c(\tau) \log\Big(\frac t{\tau}\Big)
  \end{eqnarray*}
which leads to
 $
 \log\Big(\frac t{\tau}\Big)  \ge \frac{-K \tau}{c(\tau)}-C_6 \ge -C_7.
 $
 Thus $\exp(-C_7) \tau \le t \le \tau$, showing that
  $\rho_{\tau}\f \le\f+ C_8 c(\tau)$.
    Using that $\f$ is quasi-plurisubharmonic, we
 finally observe that $\f \le \rho_{\tau} \f+B\tau^2 \le \f+B'c(\tau)$,
 since . Thus
for all $0<\tau<\tau_0$,
 $$
  \|\f-\rho_{\tau}\f \|_{L^{\infty}} \le \frac{C_9}{(-\log \tau)^{\e/n}}.
 $$
 This provides the desired control on the modulus of continuity of $\f$, as 
 convolutions $\rho_t \f(z)$ and $\sup_{B(z,t)} \f$ are uniformly comparable
 for quasi-psh functions (see \cite{Z20}).

 \medskip
 
 We finally take care of the third item.
 The proof follows the same path, but
 the stability estimate \eqref{eq:stab2} has to be modified as follows :
 \begin{equation} \label{eq:stab3}
\max_X (\psi-\varphi) \le  \frac{B'}{\left[\log \left(- \log (\Vert \psi - \varphi\Vert_1)\right)\right]^{\varepsilon \slash n}},
\end{equation}
provided that $\Vert \psi-\varphi\Vert_1 < 1 \slash e$, where $B' > 0$ is a uniform constant.

   Indeed for $w (t) = t  (\log  t)^n (\log \circ \log (t+3) )^{n + \varepsilon}$,    
   H\"older-Young inequality  allows one to show
   that there exists $A > 0$ such that for any Borel subset $K \subset X$, 
\begin{equation} \label{eq:capestimate}
\mu (K) \, \le \,  A   \Vert f\Vert_w   \,  
%\frac{\text{Cap}_{\omega_X} (K)}{ \left(- \log  \text{Cap}_{\omega_X} (K)\right)^{1 + \varepsilon \slash n}}.
\text{Cap}_{\omega_X} (K) \left(- \log  \text{Cap}_{\omega_X} (K)\right)^{-1 - \varepsilon \slash n}.
\end{equation}
Arguing  as in \cite[Proposition 2.6]{EGZ09} we 
infer that for any bounded $\omega$-plurisubharmonic functions  $\psi$ such that $\psi \ge  \varphi$ 
and any $\delta \in ]0,1[$,
$$
\|\psi-\varphi\|_{L^{\infty}} \le \delta +  \frac{B}{\left[ (- \log  \left(\text{Cap}_{\omega_X}\{ \psi-\varphi > \delta\}\right)
\right]^{\varepsilon\slash n}},
$$
The comparison principle  yields
$
\text{Cap}_{\omega_X} (\{ \psi-\varphi > \delta\}) \le C \delta^{- n}  \int_{\{\psi - \varphi > \delta\}} (\omega_X + dd^c \varphi)^n .
$
Since $(\omega_X + dd^c \varphi)^n = f d V$, it follows from  H\"older-Young inequality  that 
\begin{eqnarray*}
\text{Cap}_{\omega_X} (\{ \psi-\varphi > \delta\}) &\le &  C \delta^{- n} \Vert f\Vert_w \Vert  {\bf 1}_{\{\psi - \varphi > \delta\}}\Vert_{w^*} \\
& \le & \frac{C' \Vert f\Vert_w }{  \delta^{n} \left[- \log (-\log \text{Vol} (\{\psi - \varphi > \delta\})\right]^{\varepsilon \slash n}} \\
& \le & \frac{C' \Vert f\Vert_w }{  \delta^{n} \left[- \log (-\log  (\delta \slash \Vert \psi - \varphi\Vert_1)\right]^{\varepsilon \slash n}} .
\end{eqnarray*}
We infer
$$
\max_X (\psi-\varphi) \le \delta +  {\frac{B}{\left[-\log \left(- \log (\delta \slash \Vert \psi-\varphi\Vert_1)\right)\right]^{\varepsilon\slash n}}},
$$
provided that $\Vert \psi-\varphi\Vert_1 < 1 \slash e$. Optimizing the right hand side by taking
$$
\delta = \frac{1}{\left[\log \left(- \log (\Vert \psi - \varphi\Vert_1)\right)\right]^{\varepsilon \slash n}},
$$
 we obtain \eqref{eq:stab3}. Using Demailly-Kiselman regularization technique as above, this eventually yields the
 predicted control on the modulus of continuity.
 \end{proof}

 \begin{rem} \label{rem:modulus}
 Observe that   the constants C only weakly depend on the reference K\"ahler form $\omega_X$.
 In particular if there exist K\"ahler forms $\omega_1,\omega_2$
 such that $\omega_1 \le \omega_X \le \omega_2$, then these constants can be shown to only
 depend on the forms $\omega_1,\omega_2$. This will be useful in proving Theorem \ref{thmB}
 in Section \ref{sec:main}.
 \end{rem}

\section{Diameter control using a Ricci lower bound}

In this section we establish diameter upper bounds  by
requiring a Ricci lower bound.

     \subsection{Good representative of positive cohomology classes}
     
 Let $(X,\omega_X)$ be a compact Kähler manifold and let $dV_X:=\omega_X^n$.  
  Pick ${\mathcal B}=\{\theta_1,\ldots,\theta_s\}$ smooth closed differential forms
  whose cohomology classes constitue a basis of the finite dimensional vector space $H^{1,1}(X,\R)$.
  If $\omega$ is a K\"ahler form, it follows from the $\partial \overline{\partial}$-lemma that 
  there exists unique $(t_1,\ldots,t_s) \in \R^s$ and  
  $\f^{\mathcal B}_{\omega} \in \PSH(X,\sum_j t_j \theta_j)$ such that
  $$
  \omega=\sum_{j=1}^s t_j \theta_j+dd^c \f^{\mathcal B}_{\omega},
  \; \; \text{ with } \; \; \int_X \f^{\mathcal B}_{\omega} dV_X=0.
  $$
  
  The following elementary result is left to the reader.
  
  \begin{lem} \label{lem:boundedclasses}
  A subset ${\mathcal K}$ of the K\"ahler cone is bounded in $H^{1,1}(X,\R)$
  if and only if there exists $C>0$ such that for all $\omega \in {\mathcal K}$,
  one has $[\omega] \cdot [\omega_X]^{n-1}=\int_X \omega \wedge \omega_X^{n-1} \leq C$.
  
  Fix ${\mathcal B}$ and ${\mathcal B}'$ two bases of differential forms,
  and ${\mathcal K}$ a bounded subset of $H^{1,1}(X,\R)$.
  The following properties are equivalent.
  \begin{itemize}
  \item there  exists $C_{{\mathcal B}, {\mathcal K}}>0$ such that 
  $\|\f^{\mathcal B}_{\omega}\|_{\infty} \leq C_{{\mathcal B}, {\mathcal K}}$
  for all $\omega \in {\mathcal K}$;
  \item there  exists $C_{{\mathcal B}', {\mathcal K}}>0$ such that 
  $\|\f^{\mathcal B'}_{\omega}\|_{\infty} \leq C_{{\mathcal B}', {\mathcal K}}$
  for all $\omega \in {\mathcal K}$.
  \end{itemize}
    \end{lem}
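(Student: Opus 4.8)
The first assertion says that $\alpha\mapsto\alpha\cdot[\omega_X]^{n-1}$ restricts to a norm on the closed K\"ahler cone. One implication is immediate: this map is a linear functional on the finite-dimensional space $H^{1,1}(X,\R)$, hence bounded on bounded sets. For the converse I would show that the closed convex set
\[
\mathcal{N}_C:=\bigl\{\alpha\in\overline{K_X}\ :\ \alpha\cdot[\omega_X]^{n-1}\le C\bigr\}
\]
is bounded, where $K_X$ denotes the K\"ahler cone; since $\mathcal{K}\subset K_X$, this yields the claim. I would argue by rescaling: if $\alpha_j\in\mathcal{N}_C$ with $\|\alpha_j\|\to\infty$, then $\beta_j:=\alpha_j/\|\alpha_j\|$ has norm $1$ and, after passing to a subsequence, converges to some $\beta\in\overline{K_X}$ with $\|\beta\|=1$ and $\beta\cdot[\omega_X]^{n-1}=\lim\alpha_j\cdot[\omega_X]^{n-1}/\|\alpha_j\|=0$.

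The only point requiring an argument — the main obstacle, such as it is — is that a \emph{nonzero} nef class cannot be orthogonal to $[\omega_X]^{n-1}$. This follows from the inclusion of $\overline{K_X}$ (the nef cone) into the pseudoeffective cone: the class $\beta$ above is represented by a closed positive $(1,1)$-current $T$, and $\beta\cdot[\omega_X]^{n-1}=\int_X T\wedge\omega_X^{n-1}$ is precisely the mass of $T$ with respect to $\omega_X$. A positive current of zero mass vanishes, so $\beta=[T]=0$, contradicting $\|\beta\|=1$. Hence $\mathcal{N}_C$ is bounded, and the first equivalence follows; in particular $\mathcal{K}$ is bounded if and only if $\sup_{\mathcal{K}}\alpha\cdot[\omega_X]^{n-1}<\infty$.

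For the second equivalence it suffices, by symmetry, to prove one implication. Write $[\theta_j]=\sum_{k=1}^s a_{kj}[\theta'_k]$ for an invertible matrix $(a_{kj})$; by the $\partial\overline{\partial}$-lemma there exist fixed smooth functions $\psi_1,\dots,\psi_s$ on $X$, depending only on $\mathcal{B},\mathcal{B}'$ and $dV_X$, which I normalize by $\int_X\psi_j\,dV_X=0$, such that $\theta_j=\sum_k a_{kj}\theta'_k+dd^c\psi_j$. Given $\omega\in\mathcal{K}$ with $\omega=\sum_j t_j\theta_j+dd^c\varphi^{\mathcal{B}}_{\omega}$, substituting yields $\omega=\sum_k t'_k\theta'_k+dd^c\bigl(\varphi^{\mathcal{B}}_{\omega}+\sum_j t_j\psi_j\bigr)$ with $t'_k=\sum_j a_{kj}t_j$; comparing with the defining relation for $\varphi^{\mathcal{B}'}_{\omega}$ and integrating against $dV_X$ (where all three normalizing integrals vanish) gives the exact identity $\varphi^{\mathcal{B}'}_{\omega}=\varphi^{\mathcal{B}}_{\omega}+\sum_j t_j\psi_j$. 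Finally, since $\{[\theta_j]\}_j$ is a basis, the map $[\omega]\mapsto(t_1,\dots,t_s)$ is a linear isomorphism $H^{1,1}(X,\R)\to\R^s$, so boundedness of $\mathcal{K}$ forces a uniform bound $|t_j|\le M=M(\mathcal{B},\mathcal{K})$ for $\omega\in\mathcal{K}$. Therefore $\|\varphi^{\mathcal{B}'}_{\omega}\|_\infty\le\|\varphi^{\mathcal{B}}_{\omega}\|_\infty+M\sum_j\|\psi_j\|_\infty$, which provides the required constant $C_{\mathcal{B}',\mathcal{K}}$, and exchanging the roles of $\mathcal{B}$ and $\mathcal{B}'$ gives the reverse implication.
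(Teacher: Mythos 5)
Your proof is correct and complete; note that the paper explicitly leaves this lemma to the reader, so there is no written proof to compare against. Your two key points are exactly the expected ones: for the first assertion, the rescaling/compactness argument reduces everything to showing that a nonzero nef class $\beta$ satisfies $\beta\cdot[\omega_X]^{n-1}>0$, which you correctly deduce from the inclusion of the nef cone in the pseudoeffective cone together with the fact that a closed positive $(1,1)$-current of zero mass vanishes; for the second, the exact identity $\varphi^{\mathcal{B}'}_{\omega}=\varphi^{\mathcal{B}}_{\omega}+\sum_j t_j\psi_j$ (valid because all three normalizing integrals vanish) combined with the uniform bound $|t_j|\le M$ coming from boundedness of $\mathcal{K}$ gives the equivalence.
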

    
    To lighten notations in what follows we get rid of the upper-script and simply write $\f_{\omega}$
    to denote the normalized potential of $\omega$ with respect to a given
    basis ${\mathcal B}$.
 
 \smallskip
   
   We now explain, given particular bounded subsets of the Kähler cone, how to 
   choose a basis ${\mathcal B}$ with specific curvature properties.

     \begin{lem} \label{lem:goodref}
 Let ${\mathcal K} \subset H^{1,1}(X,\R)$ be a bounded subset of the K\"ahler cone of $X$.
 \begin{enumerate}
 \item If ${\mathcal K}$ is closed, then
 there exists $C=C(\mathcal K)>0$ such that for each $\alpha \in {\mathcal K}$, 
 one can find a K\"ahler form $\omega_{\alpha} \in \alpha$ such that 
 $C^{-1} \omega_X \le \omega_{\alpha} \le C \omega_X$
 % \mbox{and}  \pm \Theta(T_X, \omega_\alpha) \le C\omega_\alpha \otimes \mathrm{Id}_{T_X}.$
 \item If $\overline{{\mathcal K}} \subset H_{\rm big}^{1,1}(X,\R)$, then 
 there exists $C=C(\mathcal K)>0$ 
 and finitely many quasi-psh functions 
 with analytic singularities $\rho_i$
 such that for each $\alpha \in {\mathcal K}$, 
 one can find a smooth closed $(1,1)$-form $\eta_{\alpha} \in \alpha$  such that 
 $C^{-1} \omega_X \le \eta_{\alpha}+dd^c \rho_{i_{\alpha}}$ and $\eta_{\alpha} \le C \omega_X$.
 \end{enumerate}
  \end{lem}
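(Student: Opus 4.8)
The plan is to prove both statements by the same two-step scheme: reduce to finitely many classes by a compactness argument, then obtain the representative of a nearby class by a small perturbation. Fix a norm $\|\cdot\|$ on the finite-dimensional space $H^{1,1}(X,\R)$ and a basis $\{[\theta_1],\dots,[\theta_s]\}$ of it given by smooth closed real $(1,1)$-forms. For $\alpha,\beta\in H^{1,1}(X,\R)$ write $\alpha-\beta=\sum_j c_j(\alpha,\beta)[\theta_j]$; the coefficients depend linearly, hence continuously, on $\alpha-\beta$, and there is $\Lambda>0$ depending only on the $\theta_j$ and $\omega_X$ such that one has the pointwise inequality of real $(1,1)$-forms
$$-\lambda(\alpha,\beta)\,\omega_X\ \le\ \sum_j c_j(\alpha,\beta)\,\theta_j\ \le\ \lambda(\alpha,\beta)\,\omega_X,\qquad \lambda(\alpha,\beta):=\Lambda\sum_j|c_j(\alpha,\beta)|.$$
In particular $\lambda(\alpha,\beta)\to 0$ as $\beta\to\alpha$, so that classes close in $H^{1,1}(X,\R)$ differ by forms that are small relative to $\omega_X$.

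For part (1), note that, $H^{1,1}(X,\R)$ being finite-dimensional, the bounded closed set $\mathcal{K}$ is compact, while the K\"ahler cone is open. For each $\alpha\in\mathcal{K}$ I would pick a K\"ahler form $\omega_\alpha^0\in\alpha$, so that $\delta_\alpha\,\omega_X\le\omega_\alpha^0\le M_\alpha\,\omega_X$ for suitable $0<\delta_\alpha\le M_\alpha$, and then a radius $r_\alpha>0$ small enough that $\lambda(\beta,\alpha)\le\delta_\alpha/2$ whenever $\|\beta-\alpha\|<r_\alpha$. For such $\beta$ the form $\omega_\alpha^0+\sum_j c_j(\beta,\alpha)\theta_j$ represents $\beta$ and satisfies $\tfrac12\delta_\alpha\,\omega_X\le\omega_\alpha^0+\sum_j c_j(\beta,\alpha)\theta_j\le(M_\alpha+\tfrac12\delta_\alpha)\omega_X$. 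I would then extract a finite subcover $\{B(\alpha_i,r_{\alpha_i})\}_{1\le i\le N}$ of $\mathcal{K}$, let $i(\alpha)$ be the least index with $\alpha\in B(\alpha_{i(\alpha)},r_{\alpha_{i(\alpha)}})$, and set $\omega_\alpha:=\omega_{\alpha_{i(\alpha)}}^0+\sum_j c_j(\alpha,\alpha_{i(\alpha)})\theta_j$. This yields $C^{-1}\omega_X\le\omega_\alpha\le C\omega_X$ with $C$ depending only on the finitely many chosen data $\alpha_i,\omega_{\alpha_i}^0,\delta_{\alpha_i},M_{\alpha_i}$, hence only on $\mathcal{K}$.

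For part (2), $\overline{\mathcal{K}}$ is a compact subset of the open big cone, and I would run the same argument with K\"ahler forms replaced by K\"ahler currents with analytic singularities. The one genuinely non-elementary ingredient is that a class is big if and only if it contains a K\"ahler current and, by Demailly's regularization theorem, it then contains a K\"ahler current with analytic singularities. So for each $\alpha\in\overline{\mathcal{K}}$ I would fix $T_\alpha=\eta_\alpha^0+dd^c\rho_\alpha\ge\varepsilon_\alpha\,\omega_X$ with $\eta_\alpha^0$ smooth, $\rho_\alpha$ quasi-psh with analytic singularities, $\varepsilon_\alpha>0$, and $M_\alpha$ with $\eta_\alpha^0\le M_\alpha\,\omega_X$ (shrinking $\varepsilon_\alpha$ we may assume $\varepsilon_\alpha\le M_\alpha$). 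Choosing $r_\alpha>0$ with $\lambda(\beta,\alpha)\le\varepsilon_\alpha/2$ for $\|\beta-\alpha\|<r_\alpha$, extracting a finite subcover $\{B(\alpha_i,r_{\alpha_i})\}_{1\le i\le N}$ of $\overline{\mathcal{K}}$, and for $\alpha\in\mathcal{K}$ setting $\eta_\alpha:=\eta_{\alpha_{i(\alpha)}}^0+\sum_j c_j(\alpha,\alpha_{i(\alpha)})\theta_j$ together with $\rho_{i_\alpha}:=\rho_{\alpha_{i(\alpha)}}$, one obtains
$$\eta_\alpha+dd^c\rho_{i_\alpha}=T_{\alpha_{i(\alpha)}}+\sum_j c_j(\alpha,\alpha_{i(\alpha)})\theta_j\ \ge\ \tfrac12\varepsilon_{\alpha_{i(\alpha)}}\,\omega_X,$$
as well as $\eta_\alpha\le(M_{\alpha_{i(\alpha)}}+\tfrac12\varepsilon_{\alpha_{i(\alpha)}})\omega_X$, so that both required bounds hold with a suitable $C$ depending only on the finitely many chosen data; moreover only the finitely many functions $\rho_{\alpha_1},\dots,\rho_{\alpha_N}$ occur.

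The argument is entirely elementary except for one input: Demailly's regularization theorem, guaranteeing that every big class contains a K\"ahler current with analytic singularities. This is the main substantive ingredient, and the point I would expect to be the least routine; the open-cover-plus-perturbation mechanism around it is formal. Two further points still deserve attention: the perturbation estimate is uniform over each ball automatically, because in the finite-dimensional space $H^{1,1}(X,\R)$ all norms — and all sizes relative to $\omega_X$ of the correction forms $\sum_j c_j\theta_j$ — are comparable; and adding a smooth form to a current with analytic singularities leaves its singular locus and Lelong numbers unchanged, so the perturbed currents in part (2) retain the stated structure. Finally one should fix $i(\alpha)$ by a definite rule (say, the least index) so that $\alpha\mapsto\omega_\alpha$, resp. $\alpha\mapsto(\eta_\alpha,\rho_{i_\alpha})$, is well defined on all of $\mathcal{K}$.
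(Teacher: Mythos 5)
Your proof is correct and follows essentially the same route as the paper's: cover the compact set $\mathcal K$ (resp. $\overline{\mathcal K}$) by balls around finitely many classes, fix a good representative (a K\"ahler form, resp. a K\"ahler current with analytic singularities furnished by Demailly regularization) in each center, and perturb by $\sum_j c_j\theta_j$, which is small relative to $\omega_X$ for nearby classes. The only difference is that you spell out the uniform pointwise bound on the correction forms and the selection rule $i(\alpha)$, which the paper leaves implicit.
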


 \begin{proof}
 Let $\theta_1,\ldots,\theta_s$ be smooth closed $(1,1)$-forms such that the
 cohomology classes $\{\theta_j\}$ generate the finite dimensional vector space $H^{1,1}(X,\R)$. We endow the latter with the euclidean structure induced from the choice of the above basis. 
 
 Assume first that $\mathcal K$ is closed.
  Given $\alpha\in \mathcal K$, let us pick $\omega\in \alpha$ an arbitrary Kähler form. For $t\in \mathbb R^s$, consider $\omega_t:=\omega+\sum_{k=1}^s t_k \theta_k$. Clearly, there exist $\ep_\omega, C_\omega>0$ such that for $|t|<\ep_\omega$, we have $C_\omega^{-1} \omega_X \le \omega_t \le C_\omega \omega_X$. %Moreover, 
% The curvature tensor $\Theta(T_X, \omega_t)$ varies smoothly with $t$ (as long as $|t|<\ep_\omega$) hence up to increasing $C_\omega$ we can assume that  $\pm \Theta(T_X, \omega_t) \le C\omega_t \otimes \mathrm{Id}_{T_X}$.
 By compactness of $\mathcal K$, one can find a finite set $I$ and Kähler forms $(\omega_i)_{i\in I}$ along with positive numbers $\ep_i, C_i>0$ such that
 $\bigcup_{i\in I} B([\omega_i], \ep_i) \supset \mathcal K$ and,
 for $|t|<\ep_i$, the form $\omega_{t,i}:=\omega_i+\sum_{k=1}^s t_k \theta_k$ satisfies 
 \[C_i^{-1} \omega_X \le \omega_{t,i} \le C_i \omega_X.
 % \quad \mbox{and} \quad \pm \Theta(T_X, \omega_{t,i}) \le C_i\omega_{t,i} \otimes \mathrm{Id}_{T_X}.
 \]
  The first item follows with the choice of the constant $C:=\max_{i \in I} C_i$. 
  
  \smallskip
  
 We now treat the case when $\overline{\mathcal K}$ may leave the K\"ahler cone, but still
 consists of big classes (which are also nef by definition).
Given $\alpha\in \overline{\mathcal K}$, we let $\eta+dd^c \rho \in \alpha$ be an arbitrary Kähler current with analytic singularities. 
By definition $\rho$ is smooth in some Zariski open set, and $\delta \omega_X \leq \eta+dd^c \rho $
for some $\delta>0$.
Consider $\eta_t:=\eta+\sum_{k=1}^s t_k \theta_k$
for $t\in \mathbb R^s$. 
 Clearly, there exist $\ep, C>0$ such that for $|t|<\ep$, we have 
 $$
 C^{-1} \omega_X \le \eta_t +dd^c \rho
 \; \; \text{ and } \; \; \eta_t \le C \omega_X.
 $$
  %Moreover, 
% The curvature tensor $\Theta(T_X, \omega_t)$ varies smoothly with $t$ (as long as $|t|<\ep_\omega$) hence up to increasing $C_\omega$ we can assume that  $\pm \Theta(T_X, \omega_t) \le C\omega_t \otimes \mathrm{Id}_{T_X}$.
  The second  item follows now by compactness of $\overline{{\mathcal K}}$.
  \end{proof}

   \subsection{Exponential integrability of Ricci deviations}

   We now provide a proof of a Theorem~\ref{thmA} relying on purely complex-analytic methods. Actually, the theorem below allows for the Kähler class to degenerate (unlike in Theorem~\ref{thmA}) and we will appeal to \cite{GPSS22} in a crucial way to be able to treat the degenerate setting.
   
   \begin{thm} \label{thm:open}
Let $(X,\om_X)$ be a  compact K\"ahler manifold and let ${\mathcal K} \subset H^{1,1}(X,\R)$ be a bounded subset of the K\"ahler cone of $X$
such that $\overline{{\mathcal K}} \subset H_{\rm big}^{1,1}(X,\R)$.
Fix $A,B,C \in \R$ and
let $\omega$ be a K\"ahler form such that  $[\omega] \in {\mathcal K}$,
$$
\|\f_{\omega}\|_{\infty} \leq C
\; \; \text{ and } \; \; 
{\rm Ric} (\omega) \ge -A \omega-B\omega_X.
$$

\noindent
Then ${\rm diam}(X,\omega) \le D$, where $D$ depends
on $A,B,C$ and ${\mathcal K}$.
\end{thm}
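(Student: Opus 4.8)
The strategy is to reduce the diameter bound to the modulus-of-continuity result of Theorem~\ref{thm:modulus1} combined with the local diameter control of Proposition~\ref{pro:diameterYLi}. The key point is to extract, from the twisted Ricci lower bound ${\rm Ric}(\omega)\ge -A\omega-B\omega_X$ together with $\|\f_\omega\|_\infty\le C$, a quantitative integrability statement on the density $f_\omega=\omega^n/dV_X$ that is uniform over ${\mathcal K}$. Writing $\omega=\eta_\alpha+dd^c\psi$ for the good reference form $\eta_\alpha\in[\omega]$ provided by Lemma~\ref{lem:goodref} (so that $C^{-1}\omega_X\le\eta_\alpha+dd^c\rho_{i_\alpha}$ and $\eta_\alpha\le C\omega_X$, with $\psi$ uniformly bounded in view of Lemma~\ref{lem:boundedclasses}), the Ricci equation reads
\[
dd^c\log f_\omega = -{\rm Ric}(\omega)+{\rm Ric}(dV_X)\le A\omega+(B+C_0)\omega_X,
\]
so $u:=\log f_\omega$ is $\theta$-psh for a fixed smooth form $\theta$ in a bounded cohomology class. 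Hence $e^u=f_\omega$ is, up to normalization, the density of a Monge--Amp\`ere measure whose potential is controlled.

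The heart of the argument is to turn this into an \emph{exponential integrability} bound: I would show that $\int_X e^{\lambda u}\,dV_X=\int_X f_\omega^{\,1+\lambda}\,dV_X$ is bounded by a constant depending only on $A,B,C,{\mathcal K}$ for some fixed $\lambda>0$. This is where the semicontinuity of complex singularity exponents \cite{DK} and the openness theorem \cite{GuanZhou15} enter: the family of normalized potentials $\{\f_\omega : \omega\in{\mathcal H}_{A,B,C,{\mathcal K}}\}$ is relatively compact in $L^1$ (uniform bound, uniform cohomology class), and likewise the family of $\theta$-psh functions $u$ (once we know a uniform upper bound on $\sup_X u$, which follows from $\int_X f_\omega\,dV_X=\int_X\omega^n$ being bounded over ${\mathcal K}$ and the mean-value/$L^1$-compactness). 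A uniform Skoda-type / Demailly--Koll\'ar integrability exponent along this compact family then gives the desired uniform $L^{1+\lambda}$ bound on $f_\omega$. In particular $f_\omega$ satisfies \CK\ uniformly (indeed the much stronger $L^{1+\lambda}$ condition), so Theorem~\ref{thm:modulus1}(1), in the uniform form of Remark~\ref{rem:modulus} using the sandwich $\omega_1\le\omega_X\le\omega_2$ coming from Lemma~\ref{lem:goodref}, yields a uniform H\"older modulus of continuity $m_{\f_\omega}(r)\le C_\alpha r^\alpha$ for the Monge--Amp\`ere potential. Then Proposition~\ref{pro:diameterYLi} (whose Dini condition $\int_0 r^{-1}\sqrt{m_{\f_\omega}(r)}\,dr<\infty$ is automatic for a H\"older modulus) gives $d_\omega(p,q)\le C_K\,m_1\circ d_{\omega_X}(p,q)$ uniformly, hence a uniform diameter bound.

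Finally, when $\overline{{\mathcal K}}$ leaves the K\"ahler cone but stays in the big cone, the reference form $\eta_\alpha$ is no longer K\"ahler but only $\eta_\alpha+dd^c\rho_{i_\alpha}\ge C^{-1}\omega_X$ off a Zariski-closed set, and the Monge--Amp\`ere potentials are a priori unbounded; here one invokes \cite{GPSS22} to recover a uniform $L^\infty$-bound on the relevant potentials (equivalently a uniform Koll\'ar-type estimate) and re-runs the same argument on $X^\circ$, absorbing the singular part $\rho_{i_\alpha}$ (which has analytic singularities, finitely many choices) into the exponential-integrability step. The main obstacle I anticipate is precisely the uniformity of the singularity exponent: one must check that the constant $\lambda>0$ and the bound on $\int_X f_\omega^{1+\lambda}\,dV_X$ can be chosen independently of $\omega\in{\mathcal H}_{A,B,C,{\mathcal K}}$, which requires a genuine compactness argument (semicontinuity of the function $\omega\mapsto$ log-canonical threshold of $u=\log f_\omega$ along $L^1$-convergent sequences, plus the openness theorem to rule out a limiting exponent equal to the boundary value), rather than a pointwise Skoda estimate.
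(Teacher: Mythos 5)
Your overall architecture matches the paper's Step 1: rewrite the Ricci bound as the quasi-plurisubharmonicity of $\psi:=A\f_\omega-\log f_\omega$ (note your sign is off: it is $-\log f_\omega$, corrected by $A\f_\omega$, that is $\theta$-psh, and $\int_X e^{\lambda u}dV_X=\int_X f_\omega^{\lambda}dV_X$, not $f_\omega^{1+\lambda}$), then use $L^1$-compactness of this family together with the Demailly--Koll\'ar semicontinuity of singularity exponents and the openness theorem to extract a uniform $p>1$ with $\|f_\omega\|_{L^p}$ bounded, and conclude via Theorem~\ref{thm:modulus1} and Proposition~\ref{pro:diameterYLi}. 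This is exactly the paper's route for the K\"ahler case.

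There is, however, a genuine gap. For the Demailly--Koll\'ar argument you need the family $\{\psi\}$ to lie in a \emph{compact} subset of $L^1$ on which every element (including limit points) satisfies $\int_X e^{-\psi}\omega_X^n<+\infty$; this requires a uniform \emph{upper} bound on $\sup_X\psi$, equivalently a uniform positive \emph{lower} bound on the density $f_\omega=e^{A\f_\omega-\psi}$. You assert that this upper bound "follows from $\int_X f_\omega\,dV_X=[\omega]^n$ being bounded and the mean-value/$L^1$-compactness", but this is false: the soft arguments only yield the reverse inequalities, namely $\int_X e^{-\psi}\omega_X^n\le e^{AC}[\omega]^n$ and hence a \emph{lower} bound on $\sup_X\psi$ (via Jensen and the sub-mean-value property). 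An upper bound on $\sup_X\psi$ amounts to $\inf_X f_\omega\ge c>0$, which certainly does not follow from an $L^1$ bound on $f_\omega$; and without it the constants $p$ and $B$ produced by \cite{DK} depend on the unbounded quantity $M=\sup_X\psi$. This is precisely the content of the paper's Step 2, which is a substantial analytic argument: one applies the Chern--Lu inequality to $\log\mathrm{Tr}_\omega(\omega_X)$, corrects by $-C_3(\f-\rho)$ where $\rho$ is the function with analytic singularities from Lemma~\ref{lem:goodref}, and runs the maximum principle on the Zariski-open set where $\rho$ is smooth to get $\mathrm{Tr}_\omega(\omega_X)\le C_4e^{-C_3\rho}$, hence $nC_3\rho+\psi\le C_6$ and finally $\sup_X\psi\le M(A,B,C,\mathcal K)$. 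Your proposal contains no substitute for this step. (Relatedly, in the big-cone case the paper does not re-run the H\"older argument but invokes \cite[Theorem~1.1]{GPSS22} directly, whose hypothesis $f_\omega\ge\gamma>0$ is again supplied by this same missing step.)
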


Upper bounds on the diameter had been previously obtained under Ricci lower bound hypotheses
by P\u{a}un \cite[Theorem 2]{Paun01}, 
Tosatti \cite[Theorem 3.1]{Tos09},
 Zhang \cite[Theorem 2.1]{RZ} and Fu-Guo-Song \cite{FGS20} (see Corollary \ref{cor:fgs}).
 
   \begin{rem} Let us formulate a couple of comments on the result above. 
   \begin{enumerate}[label=$(\roman*)$]
   \item The assumptions made on ${\mathcal K}$ are equivalent to the existence of $C>0$
   such that for all $\a \in {\mathcal K}$ one has
   $C^{-1} \leq \a^n$ and $\alpha \cdot [\omega_X]^{n-1} \leq C$.
   \item The main outcome  is that under the assumptions, one can write $\omega^n= f dV_X$, with $\|f\|_p$ uniformly bounded, $p>1$, i.e. \CK is satisfied in a strong way. 
   
\item The a priori bound on $\|\varphi_\omega\|_\infty$ does not follow from the control on the Ricci curvature from below, as Example~\ref{Fano} below shows. 
%even when $A= 0$.
 Moreover, it can happen that the diameter is unbounded in the absence of the a priori 
 bound of $\varphi_{\omega}$, cf Example~\ref{Poincare}.
\end{enumerate}
\end{rem}

   \begin{proof}
    We can assume without loss of generality
    that the reference K\"ahler form 
    $\omega_X$ is scaled so that   $\int_X \omega_X^n=1$,
    and that $A \geq 0$.
     Let $\alpha:=[\omega]$; we write $\omega=\eta_\a+dd^c \varphi_\omega$, where
     $\eta_{\a}$ is provided by Lemma \ref{lem:goodref}.
         From now on, we simply write $\varphi=\varphi_\omega$. 
  
     By assumption ${\rm Ric} (\omega)= -A\omega -B\omega_X+T$,
  where $T \ge  0$ is a semi-positive closed $(1,1)$-form. It follows from
  the $\partial\overline{\partial}$-lemma that 
  $$
  T={\rm Ric} (\omega_X)+B\omega_X+A\eta_\a+dd^c \p,
  $$
   for some quasi-psh function $\p$ which is uniquely determined up to an additive constant.
   We fix the latter by  imposing
   $
   \omega^n =(\eta_\alpha+dd^c \f)^n=e^{A\f-\p} \omega_X^n.
   $
We easily check that 
  \begin{equation}
  \label{borne psi}
  \int_X e^{-\psi} \omega_X^n \le e^{A C} [\omega]^n, \quad \mbox{and} \quad \sup_X \psi \ge-AC-\log [\omega]^n 
  \end{equation}
The assumptions made on ${\mathcal K}$ now ensure
that we can find a constant $C_{\mathcal K}>0$ such that  one has a uniform upper bound $\int_X e^{-\psi} \omega_X^n \le C_{\mathcal K}e^{A C} $, as well as a uniform lower bound $\sup_X \psi \geq -AC-C_{\mathcal K}$.

   \medskip
   
   We set $M:=\sup_X \psi$. We proceed in two steps. In the first step, we show that there exists a constant $D=D(A, B, C, \mathcal K, M)$ such that $\mathrm{diam}(X, \omega) \le D$. In the second step, we show that one can bound $M$ in terms of $A, B,C$ and $\mathcal K$. 
   
      \medskip
   
\noindent {\bf Step 1.} {\em Diameter bound relying on an upper bound for $\psi$.}
Given any smooth form $\chi$ and any real number $E>0$, the set 
   $$
   {\mathcal C}_{\chi,M,E}=\{ u \in \PSH(X,\chi), \, \sup_X u \le  M
   \text{ and } \int_X e^{-u}\omega_X^n \le  E \}
   $$
is a compact subset of $L^1(X)$, as follows from Fatou's and Hartogs lemmas  (see \cite{GZbook}).   
We use this observation here with  (cf Lemma \ref{lem:goodref})
$$
\chi:={\rm Ric} (\omega_X)+(B+AC_{\mathcal K}')\omega_X \geq {\rm Ric} (\omega_X)+B\omega_X+A\eta_\a
$$
and $E:=e^{AC}C_{\mathcal K}$, and simply write $\mathcal C:={\mathcal C}_{\chi,M,E}$. Thanks to \eqref{borne psi}, we have $\psi \in \mathcal C$. 

Any function $u$ in ${\mathcal C}$ satisfies $\int_X e^{-u} \omega_X^n <+\infty$,
hence its complex singularity exponent is larger than one $c(u) \ge  1$.
Since quasi-psh functions are integrable at the critical exponent $c(u)$
\cite{BoB15, GuanZhou15}, and since $u \mapsto c(u)$ is lower semi-continuous
\cite[Theorem 0.2.1]{DK}, we can find $p>1$ such
that $p \le  c(u)$ for all $u \in {\mathcal C}$.

It follows now from \cite[Theorem 0.2.2]{DK} that
$$
\sup_{u \in {\mathcal C}} \int_X e^{-p u} \omega_X^n \le  B,
$$
for some uniform constant $B>0$. In particular 
$e^{-\p} \in L^p$ and $\|e^{-\p}\|_{L^p(\omega_X^n)}$ is uniformly bounded.
We infer that $\omega^n=f \omega_X^n$ with $f \in L^p$
and $\|f\|_{L^p(\omega_X^n)} \le  B'$ uniformly bounded.

When ${\mathcal K}$ is compact it  follows from Theorem \ref{thm:modulus1} that $\f$ is H\"older continuous, with a uniform control on its modulus of continuity as $[\omega]$ moves
inside ${\mathcal K}$ (see Remark~\ref{rem:modulus}).
We conclude with Proposition~\ref{pro:diameterYLi} that 
${\rm diam}(X,\omega)$ is uniformly bounded from above.
When  $\overline{{\mathcal K}}$ is merely a compact subset
of the big cone, one can invoke \cite[Theorem~1.1]{GPSS22} to conclude,
noting that $f=e^{A\f-\p} \geq c>0$ is uniformly bounded below.

   \medskip
   
\noindent {\bf Step 2.}   {\em Upper bound for $\psi$ depending only on $A,B, C$ and $\mathcal K$.}
We fix $C_2>0$ such  that the holomorphic bisectional  curvature 
${\rm Bisec} \, \omega_X \le  C_2$ is bounded from above. 
Recall that $\omega=\eta_{\a}+dd^c \f$ and Lemma \ref{lem:goodref} provides us with
a function $\rho$ with analytic singularities and  $\delta=\delta({\mathcal K})>0$
such that $\eta_{\alpha}+dd^c \rho \geq \delta \omega_X$.

It follows from Chern-Lu inequality 
(see e.g. \cite[Proposition 7.2]{Rub14})
that
$$
\Delta_{\omega} \log {\rm Tr}_{\omega}(\omega_X) \ge  -A-(2B+C_2) {\rm Tr}_{\omega}(\omega_X).
$$
Setting $H=\log {\rm Tr}_{\omega}(\omega_X)-C_3 (\f-\rho)$ we infer
\begin{eqnarray*}
\Delta_{\omega} H &\ge &  -A-(2B+C_2){\rm Tr}_{\omega}(\omega_X) -C_3 \Delta_{\omega}(\f-\rho) \\
&=& -A-C_3n-(2B+C_2){\rm Tr}_{\omega}(\omega_X) +C_3 {\rm Tr}_{\omega}(\eta_{\a}+dd^c \rho) \\
&\geq& -A-C_3n+{\rm Tr}_{\omega}(\omega_X),
\end{eqnarray*}
by choosing $C_3 \delta=1+2B+C_2$.

Since $\rho$ is smooth in some Zariski open set $\Omega$ and $\rho \rightarrow -\infty$ on $\partial \Omega$, we can apply the maximum principle in $\Omega$.
Together with the uniform bound on $\f$, we obtain
$$
{\rm Tr}_{\omega}(\omega_X) \le  C_4e^{-C_3 \rho},
\; \text{ hence } \; 
\omega_X \le  C_4 e^{-C_3 \rho} \omega.
$$
We infer $\omega_X^n \le  C_4^n e^{-nC_3 \rho} \omega^n=e^{A\f-\p-nC_3\rho+C_5}   \omega_X^n$,
hence $nC_3 \rho+\p \leq C_5+A\varphi\le C_6$.
Since $\int_X \rho \omega_X^n$ is under control, this yields a uniform upper bound on
$\int_X \p \omega_X^n$, hence on $\sup_X \p$
(see \cite[Proposition 8.5]{GZbook}).
   \end{proof}

   \subsection{Examples and consequences}

   \subsubsection{Infinite diameter or oscillation}
   
   %Taking inspiration from the Poincar\'e metric in the unit disk, 
   This first example 
   %below 
   exhibits a family of K\"ahler metrics with unbounded diameters, yet
   whose Ricci curvature is  bounded below.

   \begin{exa}
\label{Poincare}
%Here we provide a family of 
%We provide Kähler metrics $\omega_\ep=\omega_X+dd^c \vp_\ep$ on a 
%fixed 
%compact Kähler manifold $X$  such that $\Ric \ome \ge -\om_\ep$
%but  $\mathrm{diam}(X, \om_\ep)\to+\infty$. Of course $\mathrm{Osc}_X \vp_\ep \to +\infty$. 
Let $X$ be a projective manifold on which there exists a smooth ample divisor $D$ such that $K_X+D$ is ample  (e.g. $X=\mathbb P^n$ and $D$ is a smooth hypersurface of degree $d> n+1$). 
Pick a smooth hermitian metric $h$ with positive curvature on $D$ and a volume form $dV_X$ 
such that $\om_X:=-\mathrm{Ric}(dV_X)+i\Theta_h(D)\ge 0$. Next, let $s$ be a section of $D$ such that $(s=0)=D$. Set $\psi_\ep:=\log(|s|_h^2+\ep)$; one can check that $i\Theta_h(D)+dd^c \psi_\ep \ge 0$. We consider the solution $\varphi_\ep \in \PSH(X, \om_X)$ of the Monge-Ampère equation
   \[(\omega_X+dd^c\vp_\ep)^n=e^{\vp_e-\psi_\ep}dV_X,\]
and we set $\omega_\ep=\omega_X+dd^c \vp_\ep$. Clearly, one has $\Ric \ome \ge -\ome$ but we claim that $\mathrm{diam}(X,\ome)\to +\infty$. Indeed, it is somehow classical 
(see e.g. the proof of \cite[Theorem 4.5]{DnL15})
to see that $\omega_\ep$ converges locally smoothly on $X\setminus D$ to the Kähler-Einstein metric $\om_0$ with Poincaré singularities along $D$ constructed by R. Kobayashi \cite{KobR} and Tian-Yau \cite{TY87}. The latter metric is complete, hence has infinite diameter. 
%This readily implies that 
Thus the diameter of $(X, \om_\ep)$ cannot be bounded. 
 Of course $\mathrm{Osc}_X \vp_\ep \to +\infty$. 
\end{exa}

   We now  provide a family of K\"ahler metrics with uniformly positive Ricci curvature and bounded diameters, 
   whose potentials are not uniformly bounded.
   
      \begin{exa}  \label{Fano}
Let $X$ be a $K$-semistable Fano manifold which is not $K$-stable, hence does not admit a 
K\"ahler-Einstein metric. A classic example is a general deformation of the Mukai-Umemura threefold \cite{Tian97}, cf also \cite{ChiLi17} for the general picture. 	

Pick $\omega_X \in c_1(X)$ a K\"ahler form, and let $h$ denote the
unique smooth function such that $\Ric \omega_X=\omega_X+dd^c h$
and $\int_X e^{h} \omega_X^n =\int_X \omega_X^n$. 

It is classical (see 
%\cite[Section 6.2]{Tianbook}
\cite{Tian97}) that one
can find, for all $0 \leq t<1$, a K\"ahler form $\omega_t=\omega_X+dd^c u_t$ such that
$\Ric \omega_t=t \omega_t+(1-t) \omega_X$, but the situation degenerates as $t \rightarrow 1$.
More precisely
$$
(\omega_X+dd^c u_t)^n=e^{-t u_t+h} \omega_X^n
$$
for an appropriate normalization of $u_t$, and the non-existence of a K\"ahler-Einstein metric
is equivalent to  $\lim_{t\to 1} \mathrm{Osc}_X u_t \rightarrow +\infty$.

Let us set $v_t=u_t -\int_X u_t \om_X^n$; we claim that $\lim_{t\to 1} \|v_t\|_\infty =+\infty$, although it follows from Myers theorem that ${\rm diam}(X,\omega_t) \le c_n$ when $t$ is away from zero since $\Ric \omega_t \ge t \omega_t$. The claim can be checked as follows. Since $v_t$ has zero mean, we have $\sup_X v_t \le C$, hence $-\inf_{X} v_t\ge \mathrm{Osc}(v_t)-C$. Since $\mathrm{Osc}(v_t)=\mathrm{Osc}(u_t)$ diverges, we obtain the claim.
\end{exa}

    \subsubsection{Orlicz integrability}

Theorem \ref{thm:open} allows one to extend and clarify 
 a method of Fu-Guo-Song 
 which provides an upper bound on diameters under \CK
and a lower bound on the Ricci curvature.

\begin{cor} \label{cor:fgs}
Let $(X,\omega_X)$ be a  compact K\"ahler manifold.
% of complex dimension $n \in \N^*$.
Assume that a function $\f \in \PSH(X,\omega_X) \cap {\mathcal C}^{\infty}(X)$ satisfies
$
(\omega_X+dd^c \f)^n=e^{\lambda \f} f dV_X,
$
where 
$dV_X$ is a smooth volume form, $\lambda \in \R^+$, and 
%\marginpar{remplacer par $\lambda\in \{-1,0,1\}$ et supprimer dépendance en $\lambda$}
\begin{enumerate}
\item $\omega_X \ge \theta$ with $\theta$ smooth, semi-positive and big (i.e. $\int_X \theta^n >0$);
\item ${\rm Ric} (\omega) \ge -A \omega-B\om_X$ with $\omega:=\omega_X+dd^c \f$;
\item $\mu=fdV_X$ satisfies \CK with weight $w$ such that $\int_X w \circ f dV_X \le C$.
\end{enumerate}
Then ${\rm diam}(X,\omega) \le D$, where $D$ depends
on $A,B,C, w,\theta,dV_X,\lambda$. 
\end{cor}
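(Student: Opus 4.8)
The plan is to reduce the statement to Theorem~\ref{thm:open}. Since $[\omega]=[\omega_X]$ is a fixed Kähler class, $\mathcal K:=\{[\omega_X]\}$ is trivially a compact subset of the Kähler cone, and hypothesis~(2) is precisely the twisted Ricci lower bound required by that theorem. Writing $\varphi_\omega$ for the potential of $\omega$ normalized by $\int_X\varphi_\omega\,dV_X=0$, it therefore suffices to produce a \emph{uniform} bound $\|\varphi_\omega\|_\infty\le C'$, with $C'$ depending only on $A,B,C,w,\theta,dV_X,\lambda$; Theorem~\ref{thm:open} then yields $\mathrm{diam}(X,\omega)\le D$. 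Because $\|\varphi_\omega\|_\infty\le\mathrm{Osc}_X\varphi_\omega=\mathrm{Osc}_X\varphi$, the whole argument reduces to a uniform estimate on $\mathrm{Osc}_X\varphi$.

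To bound $\mathrm{Osc}_X\varphi$ I would exploit the favorable sign $\lambda\ge 0$ together with Ko\l odziej's a priori estimate \cite{Kol98}. First, $\int_X f\,dV_X\le C''$: indeed $w(t)/t\to+\infty$, so $\int_X f\,dV_X\le t_0\,\mathrm{Vol}(X)+\int_X(w\circ f)\,dV_X$ for a suitable $t_0=t_0(w)$. Now freeze the exponential factor: if $\lambda>0$, let $v\in\PSH(X,\omega_X)\cap\mathcal C^0(X)$ solve the \emph{linear} equation $(\omega_X+dd^c v)^n=c_0\,f\,dV_X$, where $c_0>0$ is chosen so that $c_0\int_X f\,dV_X=\int_X\omega_X^n$ and $\sup_X v=0$; then $\mathrm{Osc}_X v\le M_0$ by Ko\l odziej's theorem. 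Since $v\le 0$ and $v\ge -M_0$, the sign condition $\lambda\ge 0$ makes $v+\tfrac1\lambda\log c_0$ a sub-solution and $v+\tfrac1\lambda\log c_0+M_0$ a super-solution of the equation $(\omega_X+dd^c u)^n=e^{\lambda u}f\,dV_X$, so by uniqueness and the comparison principle $\varphi$ is trapped between them, whence $\mathrm{Osc}_X\varphi\le 2M_0$. (Alternatively, apply the maximum principle to $\varphi-v$, using the Hessian comparison at its extrema and plugging in both equations.) The case $\lambda=0$ is of course Ko\l odziej's theorem applied directly, since then $\int_X f\,dV_X=\int_X\omega_X^n$.

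The main obstacle is to make $M_0$ \emph{uniform} in the data: the frozen density $c_0f$ may be large when $\int_X f\,dV_X$ degenerates, so one cannot simply invoke Ko\l odziej in the class $[\omega_X]$. This is exactly the point where hypothesis~(1) enters, along the lines of \cite{FGS20}: since $\theta\le\omega_X$, both $\varphi$ and $v$ are $\theta$-psh, and running the a priori estimate in the big semipositive class $[\theta]$ --- where $\int_X\theta^n>0$ plays the role of the (positive) total mass --- one obtains $M_0=M_0(w,C,\theta,dV_X)$ depending only on the Orlicz bound $\int_X(w\circ f)\,dV_X\le C$ and on $\theta,dV_X$. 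Once $\mathrm{Osc}_X\varphi\le C'$ is established, one concludes by the reduction above; the fact that the final constant depends only on $\theta$ and $dV_X$ rather than on the full form $\omega_X$ follows by tracking the constants as in Remark~\ref{rem:modulus}. (If one prefers a route not quoting Theorem~\ref{thm:open} as a black box, after the oscillation bound one may re-run its argument: writing $\omega^n=e^{A\varphi-\psi}dV_X$, hypothesis~(2) places $\psi$ in a compact family of quasi-psh functions, so the semicontinuity of complex singularity exponents \cite{DK} and the openness theorem \cite{GuanZhou15} give $\int_X e^{-p\psi}\,dV_X\le B$ for a uniform $p>1$, hence $\omega^n=g\,dV_X$ with $\|g\|_{L^p}$ uniformly bounded, and Theorem~\ref{thm:modulus1} together with Proposition~\ref{pro:diameterYLi} finishes the proof.)
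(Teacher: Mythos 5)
Your proof follows the same route as the paper's: Condition (K) yields a uniform bound on $\|\varphi_\omega\|_\infty$ via Ko\l odziej--EGZ a priori estimates, and one then concludes by Theorem~\ref{thm:open}; the paper's own proof is precisely this two-line reduction, citing \cite{Kol98,EGZ09} for the $L^\infty$ bound. Your sub/super-solution argument for absorbing the factor $e^{\lambda\varphi}$ (using $\lambda\ge 0$, the normalization by $c_0$, and the comparison principle) is a correct way of filling in that citation. One assertion is backwards, however: from $\theta\le\omega_X$ one gets $\PSH(X,\theta)\subset\PSH(X,\omega_X)$, so $\varphi$ and $v$ are \emph{not} in general $\theta$-psh, and one cannot literally ``run the a priori estimate in the class $[\theta]$.'' The uniformity in $\omega_X$ instead comes from the capacity comparison $\mathrm{Cap}_\theta\le\mathrm{Cap}_{\omega_X}$ together with the volume lower bound $\int_X\omega_X^n\ge\int_X\theta^n>0$, which is how \cite{EGZ09,FGS20} make Ko\l odziej's oscillation bound depend only on $\theta$, $dV_X$ and the Orlicz norm of $f$; with that correction your argument coincides with the intended proof.
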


This result is established in \cite{FGS20} when $B=0$ and 
$\int_X f^{1+\e} dV_X \le C$. 
%for some $\e>0$. 
An extension has been proposed in \cite{GS22} 
to the case when $B=0$ and $\int_X f (\log f)^{n+\e} dV_X \le C$; however, it follows from Theorem \ref{thm:open} that the two conditions are equivalent.

\begin{proof}
When $\mu=fdV_X$ satisfies \CK, it follows from \cite{Kol98,EGZ09}
that $\|\f_{\omega}\|_{\infty}$ is uniformly bounded.
Thus Corollary \ref{cor:fgs} is a consequence of Theorem \ref{thm:open}.
%if the reference cohomology class remains in a compact subset of the K\"ahler cone.
\end{proof}

 \section{The main result} \label{sec:main}
 
In this section we prove our main result Theorem \ref{thmB}.

 \subsection{The special case of the projective space}
 
 To explain the main idea in a simpler setting, we first treat the case of the projective space
 $X=\C\PP^n$. In this case $H^{1,1}(X,\R)=\R\{\omega_{\rm FS}\}$ is one-dimensional and
 the K\"ahler cone is the half-line $\R^+_*\{\omega_{\rm FS}\}$  generated by the Fubini-Study K\"ahler class.
 A rescaling argument thus reduces Theorem \ref{thmB} to the following:

 \begin{thm} \label{thm:projective}
 Let $\omega=\omega_{\rm FS}+dd^c \f$ be a K\"ahler form cohomologous to the Fubini-Study form
 $\omega_{\rm FS}$ on $\C\PP^n$.
 Set  $f_{\omega}=\omega^n/\omega_{\rm FS}^n$ and fix $A>0$ and $p>2n$.
 There exists $C(A,p)>0$ such that if
 $$
 \int_X f_{\omega} |\log f_\omega|^n (\log\circ\log (f_{\omega}+3))^{p}  \omega_{\rm FS}^n \le A,
 $$
 then ${\rm diam}(X,\omega) \le C(A,p)$.
More precisely, for any $\gamma<p-2n$ there exists a constant $C=C(A,p,\gamma)$ such that for two points $x,y\in X$, we have
\[d_\omega(x,y) \le \frac{C}{(\log |\log d_{\omega_X}(x,y)|)^{\frac{\gamma}{2}}}.\]
 \end{thm}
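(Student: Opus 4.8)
The plan is to treat $\C\PP^n$ as a model case in which the reference class is K\"ahler, so that the sharp modulus of continuity of the Monge--Amp\`ere potential from Theorem~\ref{thm:modulus1} can be combined with Y.Li's distance-function technique, here reinforced by the \emph{explicit} Green function of $\omega_{\rm FS}$. First I would normalise $\int_X\omega_{\rm FS}^n=1$, so that $f_\omega=\omega^n/\omega_{\rm FS}^n$ has mass $1$, and note that the integrability hypothesis is exactly the assertion that $\mu:=f_\omega\,\omega_{\rm FS}^n$ satisfies \CK with weight $w(t)=t(\log t)^n(\log\circ\log(t+3))^p$: taking $h(s):=s^{p/n}$ one has $w(t)=t(\log t)^n\big(h\circ\log\circ\log(t+3)\big)^n$ and $\int^{+\infty}ds/h(s)<+\infty$ because $p>n$. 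Thus Kolodziej's theorem gives that $\varphi$ is continuous with ${\rm Osc}_X\varphi\le C(A)$, and Theorem~\ref{thm:modulus1}(3), applied with $\varepsilon=p-n$, yields
\[ m_\varphi(r)\ \le\ C\,\big(\log(-\log r)\big)^{-(p-n)/n}\ =\ C\,\big(\log(-\log r)\big)^{-1-\delta},\qquad \delta:=\tfrac{p-2n}{n}>0. \]

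Next I would bring in the distance function. Fix $x\in X$ and put $\rho:=d_\omega(x,\cdot)$. As in the proof of Proposition~\ref{pro:diameterYLi}, $\rho$ is $1$-Lipschitz for $d_\omega$, so $d\rho\wedge d^c\rho\le\omega$ and $|\nabla_{\omega_{\rm FS}}\rho|^2\le{\rm Tr}_{\omega_{\rm FS}}(\omega)$ almost everywhere; in particular $\rho\in W^{1,2}(X,\omega_{\rm FS})$ with $\int_X|\nabla_{\omega_{\rm FS}}\rho|^2\,\omega_{\rm FS}^n\le n[\omega]\cdot[\omega_{\rm FS}]^{n-1}$. A Chern--Levine--Nirenberg estimate against a cut-off supported in $B_{2r}:=B_{\omega_{\rm FS}}(z_0,2r)$ gives, uniformly in $z_0$ and for $r$ small,
\[ \int_{B_r}{\rm Tr}_{\omega_{\rm FS}}(\omega)\,\omega_{\rm FS}^n\ \le\ C\,r^{2n-2}\,m_\varphi(2r). \]
The obstruction is that with $m_\varphi(r)\sim(\log(-\log r))^{-1-\delta}$ the Dini condition $\int_0 r^{-1}\sqrt{m_\varphi(r)}\,dr<+\infty$ of Proposition~\ref{pro:diameterYLi} \emph{fails}: a bare Cauchy--Schwarz on dyadic annuli around $x$ only produces the divergent series $\sum_k\sqrt{m_\varphi(2^{-k}r)}$, so that proposition does not apply and genuinely new input is needed.

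That new input --- and the step I expect to be the main difficulty --- is to substitute, for the Green function $G_\omega$ of the unknown metric used by Guo--Phong--Song--Sturm, the ambient Monge--Amp\`ere Green function attached to $\omega_{\rm FS}$, which on $\C\PP^n$ is explicit, bounded from above, and has only a logarithmic diagonal singularity (so that its gradient is integrable to some power $>1$). I would represent $\rho$ minus its $\omega_{\rm FS}$-average through this Green function, feed in the trace estimate above, and then absorb the slow decay of $m_\varphi$ by comparing with the solution of an auxiliary Monge--Amp\`ere equation for $\omega_{\rm FS}$, along the lines of \cite{GPSS22}; this is the step where the \emph{positivity} of the Fubini--Study class replaces the lower bound on $f_\omega$ needed in \cite{GPSS22}. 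Bookkeeping the resulting iteration should give, for every $\gamma<p-2n=\delta n$ and all $x,y\in X$ with $d_{\omega_X}(x,y)$ small,
\[ d_\omega(x,y)\ \le\ \frac{C(A,p,\gamma)}{\big(\log|\log d_{\omega_X}(x,y)|\big)^{\gamma/2}}, \]
and letting $x,y$ be arbitrary one reads off ${\rm diam}(X,\omega)\le C(A,p)$. The hard part is exactly to make this Monge--Amp\`ere comparison quantitative enough that the stated double-logarithmic modulus comes out --- not merely that $\rho$ lies in some generalised Campanato space; once that is in place, the normalisation, the Chern--Levine--Nirenberg bound and the Green representation are routine.
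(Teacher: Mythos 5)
Your preliminaries are correct and match the paper's: the hypothesis is exactly \hyperref[CK]{Condition (K)} with $h(s)=s^{p/n}$, Theorem~\ref{thm:modulus1}(3) with $\e=p-n$ gives $m_\varphi(r)\le C(\log(-\log r))^{-1-\delta}$, $\delta=\frac{p-2n}{n}>0$, and you rightly observe that the Dini condition of Proposition~\ref{pro:diameterYLi} then fails, so a new mechanism is needed. You also name the right object: the explicit pluricomplex Green function $g_x(y)=\log\frac{|x\wedge y|}{|x||y|}$ with $(\omega_{\rm FS}+dd^c g_x)^n=\delta_x$. But the core of the proof is missing. Your description of how that Green function is used --- representing $\rho$ minus its $\omega_{\rm FS}$-average, comparing with solutions of auxiliary \MoAmp{} equations ``along the lines of \cite{GPSS22}'', and an unspecified ``iteration'' --- is left entirely as a black box, and it is not how the positivity of the class actually enters. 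The GPSS comparison scheme genuinely needs the lower bound $f_\omega\ge\gamma$, and you give no argument for how the ambient Green function would circumvent it.

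What the paper actually does at this point is elementary but delicate. Writing $f(x)=\int_X f\,[(\omega_{\rm FS}+dd^c g_x)^n-\omega_{\rm FS}^n]-\int_X f\,[(\omega_{\rm FS}+dd^c g_{x_0})^n-\omega_{\rm FS}^n]$ and expanding the difference as $dd^c g_x\wedge\sum_k\omega_{g_x}^k\wedge\omega_{\rm FS}^{n-1-k}$, one integrates by parts and applies a \emph{twisted} Cauchy--Schwarz inequality with weight $\chi''\circ g_x$, where $\chi(t)=t/(\log(B-t))^\gamma$ and $\gamma<\delta$. The first factor is bounded by $1$ because $\chi\circ g_x\in\PSH(X,\omega_{\rm FS})$; the second reduces to $\int_X(\chi''\circ g_x)^{-1}\,\omega\wedge\omega_{g_x}^k\wedge\omega_{\rm FS}^{n-1-k}$, which one controls by moving $dd^c$ off $\varphi$ onto $h\circ g_x$ with $h=(\chi'')^{-1}$ (justifying the integration by parts via $\omega_{g_x}\le e^{-2g_x}\omega_{\rm FS}$, and truncating $g_x$ when $k=n-1$), and then using precisely $|\varphi|\lesssim(\log(-g_x))^{-1-\delta}$ together with the convergence of $\int_0\frac{dr}{r(-\log r)(\log(-\log r))^{1+\eta}}$. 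This interplay between the choice of $\gamma<\delta$ in the weight and the double-logarithmic modulus of $\varphi$ is where the constraint $\gamma<p-2n$ and the final rate $(\log|\log d_{\omega_X}(x,y)|)^{-\gamma/2}$ come from (the latter via the same scheme applied to $g_y-g_x$); none of this is present or replaceable by the comparison argument you sketch, so the proposal as it stands does not constitute a proof.
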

 
 \begin{rem}
 Yang Li's observation yields a uniform bound on the diameter if 
 $\int_X f_{\omega} |\log f_{\omega}|^{p} dV_X <+\infty$ with $p>3n$,
as follows from Theorem \ref{thm:modulus1} and Proposition \ref{pro:diameterYLi}.
Example \ref{exa:radial}.3 shows that our Orlicz condition 
on the density is almost optimal.
 \end{rem}
 
 \begin{rem} \label{rem:lip}
The proof will show, more precisely, that any function
$f$ on $X$ which is $1$-Lipschitz with respect to $\omega$
satisfies $\mathrm{Osc}(f)\le C(X,[\omega])$. This is however equivalent to the above statement:
consider indeed $\gamma$ a geodesic joining two points $x$ and $y$, then
 \[|f(x)-f(y)|= \left|\int_0^1 df(\gamma'(t)) dt\right| \le \int_0^1 |\nabla f|_\omega |\gamma'(t)|_\omega dt \le d_\omega(x,y).\]
 \end{rem}

 \begin{proof}
Let $x=[x_0: \ldots :x_n]$ denote the homogeneous coordinates of $X:=\C\PP^n$ and set 
$$
g_x(y)=g(x,y)=\log \frac{|x \wedge y|}{|x|\cdot |y|}
\in \PSH(X,\omega_{\rm FS}) .
$$
 This "Green function"  $g_x$ is non-positive, smooth away from $x$ and its complex Monge-Amp\`ere measure is well-defined, satisfying
\begin{equation} \label{eq:dirac}
(\omega_{\rm FS}+dd^c_y g_x)^n=\delta_x=\text{Dirac mass at } x.
\end{equation}

\bigskip

\noindent {\bf Step 1. The diameter bound.}

\smallskip

 Fix $x_0 \in X$ and set $f(x)=d_{\omega}(x,x_0)$. This is a $1$-Lipschitz function
 with respect to $d_{\omega}$, hence it is differentiable a.e. with norm at most one; in particular 
 $0 \le df \wedge d^c f \le \omega$.
 Multiplying \eqref{eq:dirac} by $f$ and integrating, we obtain for all $x \in X$,
 $$
 f(x)=\int_X f (\omega_{\rm FS}+dd^c g_x)^n
 \; \; \text{ and } \; \;
 0=\int_X f (\omega_{\rm FS}+dd^c g_{x_0})^n,
 $$
 so that
 $$
 f(x)=\int_X f \left[ (\omega_{\rm FS}+dd^c g_x)^n-\omega_{\rm FS}^n \right]
  -\int_X f \left[ (\omega_{\rm FS}+dd^c g_{x_0})^n-\omega_{\rm FS}^n \right].
 $$
 
 It thus suffices to establish   bounds on 
 $\int_X f \left[ (\omega_{\rm FS}+dd^c g_x)^n-\omega_{\rm FS}^n \right]$ which are uniform in $x \in X$.
 Observe that  $(\omega_{\rm FS}+dd^c g_x)^n-\omega_{\rm FS}^n =dd^c g_x \wedge \sum_{k=0}^{n-1} \omega_{g_x}^k \wedge \omega_{\rm FS}^{n-1-k}$, where $\omega_{g_x}:=\omega_{\rm FS}+dd^c g_x$.
Stokes theorem and  Cauchy-Schwarz inequality yield
 {\small
 \begin{eqnarray*}
 \lefteqn{
 \left| \int_X f dd^c g_x \wedge \omega_{g_x}^k \wedge \omega_{\rm FS}^{n-1-k} \right|
 = \left| \int_X d f \wedge d^c g_x \wedge \omega_{g_x}^k \wedge \omega_{\rm FS}^{n-1-k} \right|} \\
 & \le & \left| \int_X \chi'' \circ g_x \, d g_x \wedge d^c g_x \wedge \omega_{g_x}^k 
 \wedge \omega_{\rm FS}^{n-1-k} \right|^{1/2}
 \left| \int_X (\chi'' \circ g_x)^{-1} \, d f \wedge d^c f \wedge \omega_{g_x}^k \wedge \omega_{\rm FS}^{n-1-k} \right|^{1/2}.
 \end{eqnarray*}
 }
 Here $\chi:\R^- \rightarrow \R$ is a smooth convex increasing function to be chosen below, whose growth
 is slightly smaller than $t$ at $-\infty$ so that $\chi'(-\infty)=0$.
 We normalize $\chi$ so that $\chi'(0) \le 1$ and observe that
 $\chi \circ g_x \in \PSH(X,\omega_{\rm FS})$, with
 \begin{eqnarray*}
  \omega_{\rm FS}+dd^c \chi \circ g_x &=& \chi'' \circ g_x \, dg_x \wedge d^c g_x
 +\chi' \circ g_x \omega_{g_x}+[1-\chi' \circ g_x] \omega_{\rm FS} \\
 &\ge & \chi'' \circ g_x \, dg_x \wedge d^c g_x \ge 0.
  \end{eqnarray*}
  We infer, using Stokes theorem,
  $$
  0 \le \int_X \chi'' \circ g_x \, d g_x \wedge d^c g_x \wedge \omega_{g_x}^k \wedge \omega_{\rm FS}^{n-1-k}
  \le \int_X \omega_{\chi \circ g_x}  \wedge \omega_{g_x}^k \wedge \omega_{\rm FS}^{n-1-k}
  =\int_X \omega_{\rm FS}^n=1.
  $$
  
  \medskip
  \noindent
  To bound the second integral we recall that $df \wedge d^c f \le \omega$ so that
  %and $\omega_{g_x} \le e^{-2g_x} \omega_{\rm FS}$, by \cite[Lemma 2.8]{DGG23}.
   for all $0\le k \le n-1$, 
   %one has 
  \begin{equation*}
   \int_X (\chi'' \circ g_x)^{-1} \, d f \wedge d^c f \wedge \omega_{g_x}^k \wedge \omega_{\rm FS}^{n-1-k} \le \int_X (\chi'' \circ g_x)^{-1} \omega   \wedge \omega_{g_x}^k \wedge \omega_{\rm FS}^{n-1-k}%  & \le &  \int_X (\chi'' \circ g_x)^{-1} e^{-2(n-1)g_x}\, \omega \wedge  \omega_{\rm FS}^{n-1} \\
%  & = & \underbrace{ \int_0^{+\infty} h'(t) \, \sigma_{\omega} (g_x<-t) dt}_{=:I(x)}
   \end{equation*}
%   where $\sigma_{\omega}:=\omega \wedge  \omega_{\rm FS}^{n-1}$ is the trace measure of $\omega$
  We set $h(t):=\chi''(t)^{-1}, \psi_x:=h\circ g_x$, 
  \[I_{k,x}:=\int_X \psi_x \, \omega   \wedge \omega_{g_x}^k \wedge \omega_{\rm FS}^{n-1-k}\]
  and we choose $\chi(t)=\frac{t}{\log(B-t)^\gamma}$, where $\gamma>0$ is small enough to be determined later and $B>0$ is a uniform constant large enough
  so that $\chi$ is convex increasing on $\R^-$ with $\chi'(0) \le 1$. We have 
      \begin{equation}
    \label{psi}
dd^c \psi_x = h'\circ g_x \, dd^c g_x+h''\circ g_x \, dg_x\wedge d^c g_x 
  \end{equation}
  A  straightforward computation shows that when $t\sim +\infty$, one has
  \begin{equation}
  \label{h}
  h(-t) \sim t (\log t)^{1+\gamma}, \quad  h'(-t)\sim  (  \log t)^{1+\gamma}, \quad h''(-t) \sim \frac{(\log t)^\gamma}{t}
  \end{equation}
  A direct computation (see  \cite[Lemma 2.8]{DGG23}) shows that 
  \[\omega_{g_x} \le e^{-2g_x}\omega_{\rm FS}.\]
  Thanks to the first estimate in \eqref{h}, 
   we see   that   the integral   $\int_X \psi_x  \omega_{g_x}^k \wedge \omega_{\rm FS}^{n-k}$ is uniformly bounded in $x$,
since $k \leq n-1$.
Thus everything comes down to estimating
   \[I'_{k,x}:=\int_X \psi_x dd^c \vp   \wedge \omega_{g_x}^k \wedge \omega_{\rm FS}^{n-1-k}\]
Clearly, the integral above is invariant if we translate $\varphi$ by a constant so we can freely normalize the potential $\varphi$ so that $\varphi(x)=0$. In particular, Theorem~\ref{thm:modulus1} implies that there exist constants $\delta,C>0$ independent of $x,\omega$ such that  
  \begin{equation}
  \label{mod}
  |\varphi| \le \frac{C}{\log(-g_x)^{1+\delta}}.
  \end{equation}
  We are now ready to bound $I'_{k,x}$; we are going to distinguish two cases. \\

  \noindent {\it Case 1. Assume $k<n-1$}. 
    We claim that under this assumption, we have
  \[I'_{k,x}=\int_X \vp dd^c \psi_x \wedge  \omega_{g_x}^k \wedge \omega_{\rm FS}^{n-1-k}. \]
  Fix $\ep>0$ any (small) positive number.
It follows from \eqref{psi} and $\eqref{h}$ that 
there exists $C_{\e}>0$ such that the absolute value of the integral above is bounded 
%up to a universal constant 
by 
\[\|\vp\|_\infty \int_X(h'\circ g_x) e^{-2(k+1)g_x} \om_{\rm FS}^n 
\leq C_{\e} \|\vp\|_\infty \int_Xe^{-2(n-1+\ep)g_x} \om_{\rm FS}^n
\leq C_{\e}\frac{2n}{1-\e} \|\vp\|_\infty, \]
as follows from \cite[Lemma 2.8.ii]{DGG23}. 
Now let us justify the integration by parts that we claimed. Set $T_k:=\omega_{g_x}^k \wedge \omega_{\rm FS}^{n-1-k}$; we have the identity
\begin{equation}
\label{IBP 1}
(\vp \partial \bar \partial \psi_x-\psi_x \partial \bar \partial \vp)\wedge T_k = d\left( (\vp \bar \partial \psi_x+\psi_x\partial \vp)\wedge T_k\right). 
\end{equation}  
The current whose differential we are taking in the RHS above is well-defined, as
% since we are taking the 
a wedge product of currents whose coefficients are functions 
%such that their 
whose product is $L^1$. Indeed, \eqref{psi} and \eqref{h} show that $\psi_x \partial \vp \ \wedge T_k$ has coefficients dominated by $e^{-2(k+\ep)g_x}$ while $\bar \partial \psi_x \wedge T_k$ has coefficients dominated by $e^{-2(k+1+\ep)g_x}$, and both are dominated by $e^{-2(n-1+\ep)g_x}$. The claim follows from integrating \eqref{IBP 1} and applying Stokes theorem. \\

\noindent    {\it Case 2. Assume $k=n-1$. }
    Let $g_{x,C}:=\max\{g_x,-C\}$; we have $g_{x,C}\in \PSH(X, \om_{\rm FS})$. Given the definition of the Monge-Ampère operator, it is sufficient to bound
  \begin{equation}
  \label{IBP 2}
I_C:=\int_X \psi_x dd^c \vp \wedge  \omega_{g_{x,C}}^{n-1}=\int_X \vp dd^c \psi_x \wedge  \omega_{g_{x,C}}^{n-1} 
\end{equation}
independently of $C,x$ and $\vp$. Given \eqref{psi}, we can split this integral as follows
\[I_C=\underbrace{\int_X \vp (h'\circ g_x) \om_{g_x}\wedge \omega_{g_{x,C}}^{n-1}}_{=:I_{C,1}}+\underbrace{\int_X \vp (h''\circ g_x) dg_x \wedge d^cg_x \wedge  \omega_{g_{x,C}}^{n-1}}_{=:I_{C,2}}-\underbrace{\int_X \vp (h'\circ g_x) \om_{\rm FS} \wedge \omega_{g_{x,C}}^{n-1}}_{=:I_{C,3}}.\]

Now, we choose $\gamma<\delta$ so that, we have
\begin{equation}
\label{phi h'}
|\varphi \, h'\circ g_x| \lesssim \frac{1}{\log(-g_x)^{\delta-\gamma}} \le M,
\end{equation}
where $M>0$ is some uniform constant. In particular, 
\begin{equation}
\label{C1C3}
I_{C_1}+I_{C_3} \le M \left(\int_{X}\om_{g_x}\wedge \omega_{g_{x,C}}^{n-1}+\int_X \om_{\rm FS} \wedge \omega_{g_{x,C}}^{n-1}\right) = 2M.
\end{equation}
%The third integral $I_{C,3}$ is easily dealt with since $I_{C,3} \lesssim \|\vp\|_{\infty} \int_Xe^{-2(n-1+\ep)} \om_{\rm FS}^n$ which is bounded uniformly. 
As for the second integral, it follows from \eqref{h} and \eqref{mod} that the density of the integrand with respect to $\omega_{\rm FS}^n$ is dominated by 
%\marginpar{Faire attention à ce qu'il se passe sur $(g_x=-C)$}
\begin{equation}
\label{C2}
\frac{1}{e^{2ng_x}(-g_x)\log(-g_x)^{1+\delta-\gamma}}.
\end{equation}
Using polar coordinates we are reduced to showing that the integral 
$\int_0 \frac{dr}{r\log r (\log(-\log r))^{1+\eta}}$
%$=\int^{+\infty} \frac{du}{u^{1+\delta}}$ (we have set $u=\log (-\log r)$) 
converges for any $\eta>0$, which is indeed the case (set $u=\log (-\log r)$ and perform the change of variable).

All in all, we have showed that $I_{C}$ is uniformly bounded independently of $C$, hence the first part of the theorem.

\bigskip

\noindent {\bf Step 2. Controlling the distance function.}

\smallskip
We now fix a point $x\in X$ and look at $f(y)=d_\omega(x,y)$ for $y$ close to $x$. We introduce the function $\rho=\rho_{x,y}$ defined by
$\rho:=g_y-g_x$
so that $dd^c \rho = \om_{g_y}-\om_{g_x}$. 
Observe that $|z-y|^2(z_i-x_i)-|z-x|^2(z_i-y_i)=|z-x|^2(y_i-x_i)+(z_i-x_i)(|x-y|^2+2\mathrm{Re} \langle z-x,x-y\rangle$, so that using the triangle inequality $|z-x|\le |z-y|+|x-y|$, we obtain 
\begin{equation*}
\label{diff1}
 \left|\frac{z_i-y_i}{|z-y|^2}-\frac{z_i-x_i}{|z-x|^2}\right|\le \frac{2|x-y|}{|z-x||z-y|}\cdot \left(1+\frac{|x-y|}{|z-y|}\right).
\end{equation*}
hence
\begin{equation}
\label{diff2}
d\rho \wedge d^c \rho \le \frac{C|x-y|^2}{|z-x|^2|z-y|^2}\cdot \left(1+\frac{|x-y|}{|z-y|}\right)^2 \om_{\rm FS}.
\end{equation}

We are going to use the pluricomplex Green function to estimate the distance function $f$. Since $f(x)=0$, we have from \eqref{eq:dirac} 
\begin{eqnarray*}
f(y)&=&\int_X f(\om_{g_y}^n-\om_{g_x}^n)
= \sum_{k=0}^{n-1} \int_X f dd^c\rho\wedge \om_{g_y}^k \wedge \om_{g_x}^{n-1-k}\\
&=&-\sum_{k=0}^{n-1} \int_X df\wedge d^c \rho \wedge \om_{g_y}^k\wedge \om_{g_x}^{n-1-k}.
\end{eqnarray*}
We define $\psi_0=\psi_x=h\circ g_x$, $\psi_{n-1}=\psi_y=h\circ g_y$ and for $1\le k \le n-1$, we set $\psi_k=\psi_{x,y}:=h\circ (g_x+g_y)$ where $h(t)=|t| (\log |t|)^{1+\gamma}$ for $\gamma>0$ small to be determined later (cf \eqref{h}). 
By Cauchy-Schwarz inequality, we have
\begin{equation}
\label{CS}
f(y)\le \sum_{k=0}^{n-1} \Big(\underbrace{\int_X \psi_k^{-1} d\rho \wedge d^c \rho \wedge \om_{g_y}^k\wedge\om_{g_x}^{n-1-k}}_{=:J_k}\Big)^{\frac 12}\Big(\underbrace{\int_X \psi_k\, \omega \wedge \om_{g_y}^k\wedge\om_{g_x}^{n-1-k}}_{=:I_k}\Big)^{\frac 12} 
\end{equation}

\noindent {\it 2.1. Estimating the $J_k$ integrals.}
We split the computation of the integral into two zones. First, we look outside a small neighborhood of $x$ (and $y$, which is assumed to be close to $x$), and then we look at what happens near $x$. 
We next use a local coordinate system $(z_i)$ such that $x=0$ hence $g_x = \log |z|+O(1)$ and $g_y = \log |z-y|+O(1)$. Let $\ep:=|y|$ and let $\bar y := \ep^{-1}y$, which satisfies $\bar y \in  S^{2n-1}$. Finally, we introduce the variable $w=\ep^{-1} z$. \\

$\bullet$ The first, easiest, case is when we look at $J_k$ on $X\setminus (|z|\le C^{-1})$ for $C>0$ large but fixed. There, the function $\rho$ is a smooth function of $z,x,y$ with uniformly bounded derivatives (in all variables). In particular  $d\rho\wedge d^c \rho  \le C |x-y|^2 \om_{\rm FS}$ 
and $\om_{g_x}+\om_{g_y} \le C \om_{\rm FS}$ for some uniform $C$ hence $J_k$ on the considered zone is less than $C|x-y|^{2} \int_X \om_{\rm FS}^n=C\ep^{2}$.\\

$\bullet$ Next, we  estimate $J_k$ over  $(|z|\le C\ep)$ for  $C>0$ large. 

We have to estimate
\[\int_{|z|\le C\ep}\psi_k(z)^{-1} \left(\frac{1}{|z|^2}+\frac{1}{|z-y|^2}\right) \frac{1}{|z-y|^{2k}|z|^{2(n-1-k)}}|dz|^2\]
which is equal to 
\[\int_{|w|\le C}\psi_k(\ep w)^{-1} \left(\frac{1}{|w|^2}+\frac{1}{|w-\bar y|^2}\right) \frac{1}{|w-\bar y|^{2k}|w|^{2(n-1-k)}}|dw|^2.\]
By symmetry, it is enough to consider the two cases $k=0$ and $0<k<n-1$. There are two meaningful zones which are near $w=0$ and $w=\bar y$. Near $w=0$, we have $g_x=\log\ep |w|+O(1)$ and $g_y=\log \ep+O(1)$ hence $\psi_k(z)\ge h(\log \ep |w|)+O(1)$ no matter the value of $k$.   Therefore the integral behaves (uniformly in $\bar y\in S^{2n-1}$) like 
\[\int_{|w|\le \frac 12} \frac{|dw|^2}{|w|^{2(n-k)}(-\log |\ep  w|)(\log(-\log |\ep  w|))^{1+\gamma}}.\]
Clearly, the most singular case is when $k=0$ where the integral becomes
 \begin{equation}
 \label{log log}
 \int_{|w|\le \frac{\ep}{2}} \frac{|dw|^2}{|w|^{2n}(-\log |w|)(\log(-\log |w|))^{1+\gamma}}=O\Big(\frac{1}{\log (-\log \ep)^\gamma}\Big).
 \end{equation}
 Near $w=\bar y$, $\psi_k(z)\ge -\log \ep+O(1)$ hence the integral is dominated by 
\[\int_{|w-\bar y |\le \frac 12} \frac{|dw|^2}{|w-\bar y|^{2(k+1)}(-\log \ep)(\log(-\log \ep))^{1+\gamma}}=O\Big(\frac{1}{\log \ep}\Big)\]
since $k+1<n$. \\

 $\bullet$ Finally, we  analyze $J_k$ in the zone $(C\ep \le |z| \le C^{-1})$, or equivalently $(C \le |w| \le (C\ep)^{-1})$. Here again, we assume that $k<n-1$. Then we have $\psi_k\ge h(\log\ep |w|)+O(1)$. Moreover, it follows from \eqref{diff2} that in the above zone, $J_k$ is dominated by 
{\small \[\int_{C \le |w| \le (C\ep)^{-1}}\frac{1}{\psi_k(\ep w)} \left(\frac{\ep}{\ep^2 |w| |w-\bar y|}\right)^{2}\left(1+\frac{1}{|w-\bar y|}\right)^2\frac{1}{(\ep|w-\bar y|)^{2k}(\ep |w|)^{2(n-1-k)}}\ep^{2n} |dw|^2\]
}
which is controlled by 
\[\int_{C \le |w| \le (C\ep)^{-1}} \frac{|dw|^2}{|w|^{2n+2}(-\log |\ep w|)(\log(-\log |\ep w|))^{1+\gamma}}.\]
Reversing to the variable $z=\ep w$ and using polar coordinates, this integral becomes
\[\ep^2\int_{C\ep \le r \le C^{-1}} \frac{dr}{r^3(-\log r)(\log(-\log r))^{1+\gamma}}.\]
Setting $u=-\log r$, the above quantity can be rewritten as 
\[\ep^2\int_{-\log C \le u \le -\log C\ep} \frac{e^{2u}du}{u(\log u)^{1+\gamma}}.\] \\
For $u$ large enough, the integrand is increasing hence the integral is dominated by 
\[\ep^2(-\log \ep) \frac{e^{-2\log \ep}}{(-\log \ep)(\log (-\log \ep))^{1+\gamma}} \lesssim \frac{1}{\log (-\log \ep)^{1+\gamma}}\]
hence the result.\\

\noindent {\it 2.2. Estimating the $I_k$ integrals.} 
 When bounding the diameter, we have already proved that $I_0$ and $I_{n-1}$ are bounded uniformly, so from now on we assume that $0<k<n-1$. 
 
 \smallskip

$\bullet$ On $X\setminus (|z|\le 1)$, $\psi_k$ is bounded hence on that zone $I_k$ is less than $ C \int_X \omega \wedge\om_{g_y}^k \wedge \om_{g_x}^{n-1-k}= C$. \\

From now on, we look at $I_k':=\int_{|z|\le 1} \psi_k \om \wedge \om_{g_y}^k \wedge \om_{g_x}^{n-1-k}$ and one can assume without loss of generality that $\psi_k$ has compact support in $(|z|\le 1)$. This will allow us to perform integration by parts without having to deal with boundary terms. 
%In particular, we find (thanks to \eqref{IBP 1} and the few lines below it) that 
\[I_k'=\underbrace{\int_{|z|\le 1}\psi_k \om_{\rm FS} \wedge \om_{g_y}^k \wedge \om_{g_x}^{n-1-k}}_{=:I_{k,1}'}+\underbrace{\int_{|z|\le 1}\psi_k dd^c\varphi \wedge \om_{g_y}^k \wedge \om_{g_x}^{n-1-k}}_{=:I_{k,2}'}\]

$\bullet$ The integral $I_{k,1}'$. It is dominated by 
\[\int_{|z|\le 1} \frac{|\log^2(|z||z-y|)|dz|^2}{|z-y|^{2k}|z|^{2(n-1-k)}}=\int_{|w|\le \ep^{-1}} \frac{\ep^2\log^2(\ep^2|w||w-\bar y|)|dw|^2}{|w-\bar y|^{2k}|w|^{2(n-1-k)}}.\]
Near $w=0$, we have for any small $\delta>0$ that the integrand is dominated by $\ep^2(\ep^2|w|)^{-\delta} |w|^{-2(n-1-k)} \le  |w|^{-2n+1}$, and convergence follows. The behavior near $w=\bar y$ 
is dealt with similarly. Finally, near infinity, the integrand is dominated by $\ep^{2+2\delta} |w|^{-2(n-1-\delta)}$ while $\int^{\ep^{-1}}\frac{\ep^{2+2\delta} |dw|^2}{ |w|^{2(n-1-\delta)}}=\int^{\ep^{-1}} \ep^{2+2\delta} r^{1+2\delta} dr \simeq 1$; we are done with $I_{k,1}'$.\\

$\bullet$ The integral $I_{k,2}'$. One can assume that $\varphi(x)=0$, hence 
\begin{equation}
\label{mod2}
|\varphi(z)|\le \frac{C}{(\log(-\log |z|))^{1+\delta}}
\end{equation}
for some $\delta>0$. An integration by parts shows that 
$I_{k,2}'=\int_{|z|\le 1}\varphi dd^c \psi_k  \wedge \om_{g_y}^k \wedge \om_{g_x}^{n-1-k}.$
Next, if we set $\sigma:=g_x+g_y$, \eqref{h} and \eqref{mod2} show that if $\gamma <\delta$, one has
\begin{equation}
\label{hp}
|\varphi h'(\sigma)(\ep w)| \le 
\begin{cases} 
C \cdot \frac{ \log(-\log |w-\bar y|)^{1+\gamma}}{\log(-\log \ep)^{1+\delta}} & \mbox{if} \, |w-\bar y| \le \frac 12; \\
C & \mbox{otherwise for } \, |w|\le \ep^{-1}.
\end{cases}
\end{equation}
as well as 
\begin{equation}
\label{hpp}
|\varphi h''(\sigma)(\ep w)| \le 
\begin{cases} 
C & \mbox{if } \, |w|\le C; \\
 \frac{ C }{(-\log \ep |w|)(\log(-\log \ep|w|))^{1+\delta-\gamma}} & \mbox{if} \, C\le |w| \le \ep^{-1}. 
\end{cases}
\end{equation}
Now, let us write
\begin{eqnarray*}
dd^c \psi_k &=& h'(\sigma)(dd^c g_x+dd^c g_y)+h''(\sigma)d(g_x+g_y) \wedge d^c(g_x+g_y)\\
&=&h'(\sigma)(\om_{g_x}+\om_{g_y}-2\om_{\rm FS})+h''(\sigma)d(g_x+g_y) \wedge d^c(g_x+g_y). 
\end{eqnarray*}

As in the previous computations, we split the integral $I_{k,2}'$ into the zones $(|w|\le C)$ and $(C\le |w| \le \ep^{-1})$.  
 On $(|w|\le C)$ we have by \eqref{hpp},
\[ |\varphi h''(\sigma)|d\sigma \wedge d^c\sigma \wedge \om_{g_y}^k \wedge \om_{g_x}^{n-1-k} \le C \Big( \frac{1}{|w|^2}+\frac{1}{|w-\bar y|^2}\Big)\cdot \frac{ |dw|^2}{|w-\bar y|^{2k}|w|^{2(n-1-k)}}\]
whose integral on the considered zone converges since $k>0$ and $k+1<n$. Next, by \eqref{hp} we have on  $(|w|\le C)\cap (|w-\bar y| \ge \frac 12)$ the following inequality
\[ |\varphi h'(\sigma)|(\om_{g_x}+\om_{g_y}+\om_{\rm FS}) \wedge \om_{g_y}^k \wedge \om_{g_x}^{n-1-k}\le  \frac{ C|dw|^2}{|w|^{2(n-k)}}\] 
and the integral of the RHS is clearly convergent on the latter domain. On $(|w-\bar y| \ge \frac 12)$, \eqref{hp} shows that we have
\[ |\varphi h'(\sigma)|(\om_{g_x}+\om_{g_y}+\om_{\rm FS}) \wedge \om_{g_y}^k \wedge \om_{g_x}^{n-1-k}\le \frac{C \log(-\log |w-\bar y|)^{1+\gamma}|dw|^2}{|w-\bar y|^{2(k+1)}}\]
and the convergence follows since $k+1<n$. Combining those three inequalities shows that $I_{k,2}'$ is uniformly convergent over $(|w|\le C)$. \\

For the zone $(C\le |w|\le (C\ep)^{-1})$ we use \eqref{hpp} again to obtain
\[ 
|\varphi h''(\sigma)|d\sigma \wedge d^c\sigma \wedge \om_{g_y}^k \wedge \om_{g_x}^{n-1-k} \le  \frac{C|dw|^2}{|w|^{2n}(-\log \ep |w|)(\log(-\log \ep|w|))^{1+\delta-\gamma}}.
\]
The integral of the RHS is the same as $\int_{\ep C \le |z| \le C^{-1}}\frac{|dz|^2}{|z|^{2n}(-\log |z|)(\log(-\log |z|))^{1+\delta-\gamma}}$ which is uniformly bounded as $\ep\to 0$ since $\gamma<\delta$. Finally, we have thanks to \eqref{hp}
\[ |\varphi h'(\sigma)|(\om_{g_x}+\om_{g_y}+\om_{\rm FS}) \wedge \om_{g_y}^k \wedge \om_{g_x}^{n-1-k} \le C(\om_{g_x}+\om_{g_y}+\om_{\rm FS}) \wedge \om_{g_y}^k \wedge \om_{g_x}^{n-1-k} \]
and the integral of the RHS can be computed in cohomology; it is equal to $3$. This ends to show that $I_{k,2}'$ is uniformly bounded, hence the theorem is proved. 
 \end{proof}

 \subsection{General case}

 We now deal with the general case whose statement we recall:

\begin{thm}
\label{thm:general}
Let $(X,\om_X)$ be a compact K\"ahler manifold of complex dimension $n$ and let
$dV_X:=\frac{\omega_X^n}{n!}$ the associated smooth volume form.
Let ${\mathcal K} \subset H^{1,1}(X,\R)$ be a compact subset of the K\"ahler cone of $X$.
Given $\omega$ a K\"ahler form in ${\mathcal K}$, we set $f_{\omega}=\omega^n/dV_X$
and consider, for $A>0$ and  $p>2n$ fixed,
$$
{\mathcal K}_{A,p}:=\{ \omega \in  {\mathcal K} \text{ such that } 
\int_X f_{\omega} |\log f_{\omega}|^{n} (\log(\log f_\om+3))^p  dV_X \le A \}.
$$
 For any $\gamma<p-2n$, there exists a constant $C=C({\mathcal K},dV_X,A,p,\gamma)>0$ such that for all $\omega \in {\mathcal K}_{A,p}$ and any two points $x,y\in X$, we have 
 \[d_\omega(x,y) \le \frac{C}{(\log |\log d_{\omega_X}(x,y)|)^{\frac{\gamma}{2}}}.\]
  \end{thm}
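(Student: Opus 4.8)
The plan is to reproduce the argument of Theorem~\ref{thm:projective}, the roles of $\omega_{\rm FS}$ and of the Fubini--Study Green function being taken respectively by a reference form attached to the class $\alpha:=[\omega]$ and by the ambient Monge--Amp\`ere Green function of $(X,\omega_X)$. First I would rescale $\omega_X$ so that $\int_X\omega_X^n=1$. Given $\omega\in\mathcal K_{A,p}$, Lemma~\ref{lem:goodref}(1) provides a K\"ahler form $\omega_\alpha\in\alpha$ with $C^{-1}\omega_X\le\omega_\alpha\le C\omega_X$, $C=C(\mathcal K)$; write $\omega=\omega_\alpha+dd^c\varphi$ with $\varphi$ normalized. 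The Orlicz bound on $f_\omega=\omega^n/dV_X$ says precisely that $\omega^n$ obeys \CK with weight $w(t)=t|\log t|^n(\log\circ\log(t+3))^p$ and with $\int_X w\circ f_\omega\,dV_X$ uniformly bounded, so Theorem~\ref{thm:modulus1}(3) applied with reference form $\omega_\alpha$, combined with Remark~\ref{rem:modulus} (the constants depending only on $C^{-1}\omega_X\le\omega_\alpha\le C\omega_X$), yields a modulus of continuity estimate $m_\varphi(r)\le C(\log(-\log r))^{-1-\delta}$ that is \emph{uniform} over $\mathcal K_{A,p}$ with a \emph{uniform} $\delta>0$; it is exactly the hypothesis $p>2n$ that makes $\delta$ positive, and I would fix $\gamma$ as in the statement, smaller than this $\delta$.

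Next I would bring in the ambient Monge--Amp\`ere Green function of $(X,\omega_\alpha)$ from \cite{DGG23}: for every $x\in X$ a function $g_x\in\PSH(X,\omega_\alpha)\cap\mathcal C^\infty(X\setminus\{x\})$, normalized by $\sup_X g_x=0$, with $(\omega_\alpha+dd^c g_x)^n=\alpha^n\,\delta_x$, with $g_x(z)=\log d_{\omega_X}(x,z)+O(1)$ near $x$, and with $\omega_{g_x}:=\omega_\alpha+dd^c g_x\le Ce^{-2g_x}\omega_\alpha$; by Lemma~\ref{lem:goodref}(1) and compactness of $\mathcal K$, every implied constant, as well as $\alpha^n$, is pinched between two positive constants depending only on $\mathcal K$ and $\omega_X$, uniformly in $x$ and $\alpha$. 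With this uniform data in place, the two steps of the proof of Theorem~\ref{thm:projective} go through. In \textbf{Step~1} one takes $f(\cdot)=d_\omega(x_0,\cdot)$, which is $1$-Lipschitz for $\omega$ so that $0\le df\wedge d^c f\le\omega$; the Dirac-mass property gives $f(x)=\alpha^{-n}\int_X f\,(\omega_{g_x}^n-\omega_{g_{x_0}}^n)$, one expands $\omega_{g_x}^n-\omega_\alpha^n=dd^c g_x\wedge\sum_k\omega_{g_x}^k\wedge\omega_\alpha^{n-1-k}$, integrates by parts and applies Cauchy--Schwarz against the convex weight $\chi(t)=t/(\log(B-t))^\gamma$: the factor $\int\chi''\circ g_x\,dg_x\wedge d^c g_x\wedge(\cdots)$ is bounded cohomologically since $\chi\circ g_x\in\PSH(X,\omega_\alpha)$, while $\int(\chi''\circ g_x)^{-1}\,\omega\wedge(\cdots)$ is controlled by splitting $\omega=\omega_\alpha+dd^c\varphi$, a further integration by parts on the $dd^c\varphi$ part, the modulus estimate for $\varphi$, the bound $\omega_{g_x}\le Ce^{-2g_x}\omega_\alpha$ and the integrability $\int_X e^{-2(n-1+\varepsilon)g_x}\omega_X^n<\infty$ (\cite[Lemma~2.8]{DGG23}), together with the elementary convergence of $\int_0\frac{dr}{r(-\log r)(\log(-\log r))^{1+\eta}}$ for $\eta>0$. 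This bounds $\mathrm{Osc}_X f=\diam(X,\omega)$ in terms of $\mathcal K,dV_X,A,p$ alone.

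In \textbf{Step~2}, for $x$ and $y$ close I would set $\rho:=g_y-g_x$ and $f(\cdot)=d_\omega(x,\cdot)$, so that $f(y)=-\sum_{k=0}^{n-1}\int_X df\wedge d^c\rho\wedge\omega_{g_y}^k\wedge\omega_{g_x}^{n-1-k}$, and apply Cauchy--Schwarz, $f(y)\le\sum_k J_k^{1/2}I_k^{1/2}$ with $J_k=\int_X\psi_k^{-1}\,d\rho\wedge d^c\rho\wedge(\cdots)$ and $I_k=\int_X\psi_k\,\omega\wedge(\cdots)$, where $\psi_k$ is $h\circ g_x$, $h\circ g_y$ or $h\circ(g_x+g_y)$ with $h(t)=|t|(\log|t|)^{1+\gamma}$. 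Writing $\varepsilon=d_{\omega_X}(x,y)$ and working in normal coordinates at $x$ (so $g_x=\log|z|+O(1)$, $g_y=\log|z-y|+O(1)$, uniformly since $\omega_\alpha\asymp\omega_X$), the $J_k$ split into the region away from $x,y$, the $\varepsilon$-ball around $x$, and the intermediate annulus, the dominant contribution being $O\big((\log(-\log\varepsilon))^{-\gamma}\big)$; the $I_k$ are cohomological away from the poles and, near them, bounded by integration by parts using $|\varphi|\le C(\log(-\log|z|))^{-1-\delta}$ and $\gamma<\delta$. Combining yields $d_\omega(x,y)=f(y)\le C(\log|\log d_{\omega_X}(x,y)|)^{-\gamma/2}$ with $C$ uniform over $\mathcal K_{A,p}$, which is the asserted estimate (and in particular $\diam(X,\omega)\le C$).

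The main obstacle is not any single estimate — each integral estimate above is literally the one carried out in the proof of Theorem~\ref{thm:projective} — but the \emph{uniformity} of all constants as $\alpha$ ranges over the compact set $\mathcal K$. This rests on three facts used together: Lemma~\ref{lem:goodref}(1), giving a reference form $\omega_\alpha$ uniformly equivalent to $\omega_X$ with $\alpha^n$ pinched between positive constants; Remark~\ref{rem:modulus}, giving a modulus of continuity for $\varphi$ that does not degenerate with $\alpha$; and the uniform-in-$x$ (hence, via the first point, uniform-in-$\alpha$) estimates for $g_x$ from \cite{DGG23}. A secondary point to verify is that the integrations by parts remain legitimate — the currents involved have $L^1$ coefficients whose relevant products are integrable, exactly as justified in the projective case — and that the passage to normal coordinates around $x$ and $y$ is uniform, which again follows from $\omega_\alpha\asymp\omega_X$.
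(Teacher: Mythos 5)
Your overall architecture (uniform reference form via Lemma~\ref{lem:goodref}, uniform modulus of continuity via Theorem~\ref{thm:modulus1} and Remark~\ref{rem:modulus}, then the two-step argument of Theorem~\ref{thm:projective}) is the right one, but the proposal rests on an object that is not available: an \emph{exact} pluricomplex Green function $g_x\in\PSH(X,\omega_\alpha)$ with $(\omega_\alpha+dd^c g_x)^n=\alpha^n\,\delta_x$, smooth away from $x$, with a logarithmic pole and satisfying $\omega_{g_x}\le Ce^{-2g_x}\omega_\alpha$. These are properties of the explicit Fubini--Study Green function $\log\frac{|x\wedge y|}{|x||y|}$ on $\C\PP^n$, and \cite{DGG23} is used in the paper only for that explicit function. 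The existence of such an exact Green function on a general compact K\"ahler manifold --- with a Dirac-mass Monge--Amp\`ere measure, regularity off the pole, and the quantitative upper bound on $\omega_{g_x}$ --- is not known; the paper flags this explicitly, saying that in the general case one ``can only construct an approximate Green function for the complex Monge--Amp\`ere operator'' (citing \cite{CG09}). Without such a $g_x$, the identity $f(x)=\alpha^{-n}\int_X f\,(\omega_{g_x}^n-\omega_{g_{x_0}}^n)$ on which both of your steps rest cannot even be written down.

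The paper's proof therefore replaces your $g_x$ by the glued local function $g_x=\chi_j\,(\log|z-x|-C)$, which after rescaling is $\omega_X$-psh but whose Monge--Amp\`ere measure is only $\omega_X^n+\delta_x+\Theta_x+dd^c g_x\wedge\omega_X\wedge S_x$, with $\Theta_x$ a smooth measure and $S_x$ a combination of wedge products of $\omega_{g_x}$ and $\omega_X$. Two extra families of terms must then be controlled: the mixed terms $\int_X f\,dd^c g_x\wedge\omega_X\wedge S_x$, which succumb to the same integration by parts and twisted Cauchy--Schwarz as the main terms; and the term $\int_X f\,[\Theta_x-\Theta_{x_0}]$, which requires a genuinely new ingredient --- writing $\Theta_x-\Theta_{x_0}=dd^c\eta(x,x_0)$ with $\eta$ a smooth $(n-1,n-1)$-form produced by the Green operator of Hodge theory, decomposing $\eta$ via a partition of unity into smooth functions times \emph{closed positive} $(n-1,n-1)$-forms, and integrating by parts against $df\wedge d^cf\le\omega$. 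This is the content that your proposal is missing. Your discussion of uniformity over $\mathcal K$ (Lemma~\ref{lem:goodref}, Remark~\ref{rem:modulus}, uniformity of the local coordinates) is correct and matches the paper.
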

 
 The proof is similar to that of Theorem \ref{thm:projective},
  but it requires to control extra terms due to the fact that
 we can only construct an approximate Green function for the complex Monge-Amp\`ere
 operator (see \cite{CG09}).

\begin{proof}
We use a double cover of $X$ by finitely many open charts $U_j \subset V_j$ and fix
$\chi_j \in {\mathcal C}^{\infty}_c(V_j)$ such that  $0 \le \chi_j \le 1$
with $\chi_j \equiv 1$ near $\overline{U}_j$.

For $x \in X$, we let $\ell_{j,x}(z)$ denote the function $\log |z-x|-C$ in the chart $V_j$,
where $z$ denotes holomorphic coordinates,
and set $g_x(z)=\chi_j \ell_{j,x}(z)$. If $x \in X$ belongs to several charts,
we choose one index $j=j(x)$ so that $g_x$ is uniquely determined.
The constant $C>0$ is chosen so that $g_x \le -1$ on $X$.

The function $g_x$ is psh near $x$ and smooth in $X \setminus \{x\}$, hence it is quasi-psh.
Moreover 
$$
dd^c g_x \ge  d \chi_j \wedge d^c \ell_{j,x}
+d \ell_{j,x} \wedge d^c \chi_j +\ell_{j,x} dd^c \chi_j \ge -C_2 \omega_X
$$
for a constant $C_2>0$ independent of $x$. 

Rescaling $\omega_X$ we can assume that $C_2=1$,
so that each function $g_x$ belongs to $\PSH(X,\omega_X)$. We set $\omega_{g_x}:=\om_X+dd^c g_x$. 
The complex Monge-Amp\`ere measure of this approximate Green function is well-defined
and satisfies
\begin{eqnarray}
\label{om gx}
(\omega_X+dd^c g_x)^n&=&\om_X^n+(dd^c g_x)^n+\sum_{k=1}^{n-1}{n \choose k} \omega_X^k \wedge (dd^c g_x)^{n-k} \\
&=& \om_X^n+\delta_x+\Theta_x+\sum_{k=1}^{n-1}{n \choose k} \omega_X^k \wedge (\om_{g_x}-\om_X)^{n-k} \nonumber \\
&=&\omega_X^n+\delta_x+\Theta_x+dd^c g_x \wedge \omega_X \wedge S_x, \nonumber
\end{eqnarray}
where 
\begin{itemize}
\item[$\bullet$] $\delta_x$ is the Dirac measure at point $x$,
\item[$\bullet$] $\Theta_x$ is a smooth measure on $X$,
\item[$\bullet$] and $S_x=\sum_{\ell=0}^{n-2} c_\ell (\omega_X +dd^c g_x)^\ell \wedge \omega_X^{n-2-\ell}$,
for some $c_j \in \R$ independent of $x$.
\end{itemize}

As in the proof of Theorem \ref{thm:projective}, we consider 
$f(x)=d_{\omega}(x,x_0)$, where $x_0$ is a fixed base point.
This is a $1$-Lipschitz function with respect to $d_{\omega}$, hence
it s differentiable almost everywhere with
$0 \le df \wedge d^c f \le \omega$. Observe that for all $x \in X$,
\begin{equation}
\label{kernel}
f(x)=\int_X f (\omega_X+dd^c g_x)^n
-\int_X f \omega_X^n-\int_X f \Theta_x-\int_X f dd^c g_x \wedge \omega_X \wedge S_x.
\end{equation}
For $x=x_0$ we obtain $f(x_0)=0$, hence
\begin{eqnarray*}
f(x) &=& \int_X f [(\omega_X+dd^c g_x)^n-\omega_X^n]
-\int_X f [(\omega_X+dd^c g_{x_0})^n-\omega_X^n] \\
&+& \int_X f dd^c g_{x_0} \wedge \omega_X \wedge S_{x_0}
-\int_X f dd^c g_x \wedge \omega_X \wedge S_x+\int_X f[\Theta_{x_0}-\Theta_{x}].
\end{eqnarray*}

We are going to bound each of these integrals from above, this will provide a uniform bound on $f$
hence on the diameter of $(X,\omega)$. Decomposing
\begin{equation}
\label{difference}
(\omega_X+dd^c g_x)^n-\omega_X^n=dd^c g_x \wedge \sum_{k=0}^{n-1} \omega_{g_x}^k \wedge \omega_X^{n-1-k},
\end{equation}
controlling the first four integrals reduces to establish a uniform bound on
integrals of the form 
$\int_X f dd^c g_x \wedge  \omega_{g_x}^k \wedge \omega_X^{n-1-k}$.
This can be treated as in the proof of Theorem \ref{thm:projective} by
integrating by parts and using a twisted Cauchy-Schwarz inequality.

By comparison with the proof of Theorem \ref{thm:projective}, we
%need to deal with two additional issues. First, equality \eqref{UM} does not hold since the Monge-Ampère of $g_x$ is not exactly the Dirac mass at $x$. Instead, we get from \eqref{om gx} and \eqref{phi h'}
%\[\int_{U_M} \varphi |h'\circ g_x| \om_{g_x} \wedge \omega_{g_{x,C}}^{n-1} \lesssim \int_X e^{-2(n-1+\ep)g_x} \om_X^n\]
%which is uniformly bounded. Next, we 
need to see that the bounds on all integrals involved
are uniform with respect to $\omega \in {\mathcal K}$. 
Using Lemma \ref{lem:goodref}  we
decompose $\omega=\omega_{\alpha}+dd^c \f$, where 
$C^{-1} \omega_X \le \omega_{\alpha} \le C \omega_X$
for some uniform constant $C=C({\mathcal K})>0$. The salient point is then to
establish a uniform bound on the modulus of continuity of $\f$.
The latter follow from Theorem \ref{thm:modulus1} and Remark \ref{rem:modulus}.

\smallskip

We finally focus on the last term $\int_X f[\Theta_x-\Theta_{x_0}]$.
The smooth $(n,n)$-form $\Theta_x-\Theta_{x_0}$ is $dd^c$-cohomologous to 
$\delta_{x_0}-\delta_x$, hence it can be written as $dd^c \eta(x,x_0)$
for some smooth form $\eta$ of bidegree $(n-1,n-1)$ defined
by the Green operator of Hodge theory (see \cite{GH78}). 
In particular  $\eta$ depends smoothly on $x,x_0$.
Using a partition of unity, we can   decompose 
$$
\eta=\sum_{\ell,I,J} \eta_{\ell,I,\overline{J}} \beta_{\ell,I,\overline{J}},
$$
where $\eta_{\ell,I,\overline{J}}$ are smooth functions and 
$\beta_{\ell,I,\overline{J}}$ is a local basis of smooth forms of bidegree $(n-1,n-1)$
which are {\it closed} and {\it positive}. Integrating by parts we thus obtain
\begin{eqnarray*}
\int_X f[\Theta_x-\Theta_{x_0}]
&=& \sum_{\ell,I,J} \int_X f dd^c \eta_{\ell,I,\overline{J}} \wedge \beta_{\ell,I,\overline{J}} 
= \sum_{\ell,I,J} -\int_X df \wedge d^c \eta_{\ell,I,\overline{J}} \wedge \beta_{\ell,I,\overline{J}}\\
&\le &  \sum_{\ell,I,J}
\left( \int_X df \wedge d^c f \wedge  \beta_{\ell,I,\overline{J}} \right)^{1/2}
\left( \int_X d \eta_{\ell,I,\overline{J}} \wedge d^c \eta_{\ell,I,\overline{J}} \wedge  \beta_{\ell,I,\overline{J}} \right)^{1/2} \\
&\le &  C \sum_{\ell,I,J}
\left( \int_X \omega \wedge  \beta_{\ell,I,\overline{J}} \right)^{1/2} 
\le  C' \left( \int_X \omega \wedge  \omega_X^{n-1} \right)^{1/2},
\end{eqnarray*}
bounding each form $\beta_{\ell,I,\overline{J}}$ by a uniform multiple of 
the reference form $\omega_X^{n-1}$.
 Observe finally that $\int_X \omega \wedge  \omega_X^{n-1}$ remains uniformly bounded from above
as the cohomology class $[\omega]$ evolves in a bounded subset of $H^{1,1}(X,\R)$.

\smallskip

The more precise bound
$d_\omega(x,y) \le C(\log |\log d_{\omega_X}(x,y)|)^{-\frac \gamma 2}$
is proved similarly.
  \end{proof}

\section{Diameter bounds for currents}

In this section, we prove Theorem \ref{thmC} and
adopt the following setup. 

\begin{setup}
\label{setup}
Let $(X,\om_X)$ be a compact Kähler manifold and let $E$ be a divisor with simple normal crossings. Let $X^\circ:=X\setminus E$ and let $T=\theta+dd^c\varphi$ be a closed positive $(1,1)$-current where $\theta$ is a smooth closed $(1,1)$-form. We assume that $\omega:=T|_{X^\circ}$ is a Kähler form. Given $x\in X^\circ$, we define $\rho_x(y)=d_\omega(x,y)$ for any $y\in X^\circ$. 
\end{setup}

\subsection{Integrability properties of the distance}

The following result builds upon \cite[Lemma~1.3]{DPS}.

\begin{lem}
\label{dist L2}
In the Setup~\ref{setup}, the function $\rho_x$ belongs to $W^{1,2}(X, \omega_X^n)$ for any $x\in X^\circ$.
\end{lem}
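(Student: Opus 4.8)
The plan is to combine the Lipschitz character of $\rho_x$ with the finiteness of the mass of $T$, and then to cross the divisor $E$ by a capacity argument. First, since $\rho_x(y)=d_\omega(x,y)$ is $1$-Lipschitz for $d_\omega$, it is continuous on $X^\circ$ and differentiable almost everywhere with $|\nabla_\omega\rho_x|_\omega\le 1$. Exactly as in the proof of Proposition~\ref{pro:diameterYLi}, the pointwise inequality $|\nabla_{\omega_X}\rho_x|_{\omega_X}^2\le{\rm Tr}_{\omega_X}(\omega)\,|\nabla_\omega\rho_x|_\omega^2\le{\rm Tr}_{\omega_X}(\omega)$ together with the identity ${\rm Tr}_{\omega_X}(\omega)\,\omega_X^n=n\,\omega\wedge\omega_X^{n-1}$ gives
\[
\int_{X^\circ}|\nabla_{\omega_X}\rho_x|_{\omega_X}^2\,\omega_X^n\ \le\ n\int_{X^\circ}\omega\wedge\omega_X^{n-1}\ \le\ n\int_X T\wedge\omega_X^{n-1}\ =\ n\,[\theta]\cdot[\omega_X]^{n-1}\ <\ +\infty,
\]
since $\omega=T|_{X^\circ}$ and the trace measure $T\wedge\omega_X^{n-1}$ of the closed positive current $T$ is a finite positive measure on $X$. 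Thus $\rho_x$ has finite Dirichlet energy on $X^\circ$.

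The heart of the matter is to promote this to global $W^{1,2}$ integrability across $E$, and for that one first needs an $L^2$ bound on $\rho_x$ itself. Here lies the new difficulty with respect to \cite[Lemma~1.3]{DPS}: $\rho_x$ need not be bounded, as the Kähler metric $\omega$ may well be complete near $E$ — for instance of Poincaré type, in which case $\rho_x$ grows slowly to $+\infty$ along $E$. I would argue locally: in a coordinate polydisc where $E$ is a union of coordinate hyperplanes (which is possible since $E$ has simple normal crossings), I would freeze the coordinates transverse to one branch of $E$ and apply, on the dyadic annuli shrinking toward the corresponding puncture, a scale-invariant one-dimensional Poincaré inequality combined with a discrete Hardy inequality; this bounds $\int|\rho_x|^2$ over the chart in terms of $\int|\nabla_{\omega_X}\rho_x|^2$ over the chart and of $\sup|\rho_x|$ over a fixed compact subset of $X^\circ$, the latter being finite because $\rho_x$ is continuous on $X^\circ$. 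Iterating over the finitely many branches of $E$ and covering $X$ by such charts yields $\rho_x\in L^2(X,\omega_X^n)$.

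Finally, since $E$ is a divisor it is negligible for the $W^{1,2}$-capacity: there exist Lipschitz cut-off functions $\chi_j$ on $X$ with $0\le\chi_j\le 1$, $\chi_j\equiv 0$ near $E$, $\chi_j\to 1$ pointwise and $\|d\chi_j\|_{L^2(X,\omega_X^n)}\to 0$ (the usual logarithmic capacitors around each branch). Each $\rho_x\chi_j$ lies in $W^{1,2}(X,\omega_X^n)$, being compactly supported in $X^\circ$ where $\rho_x$ is locally Lipschitz; moreover $\rho_x\chi_j\to\rho_x$ in $L^2$ by the previous step, while $d(\rho_x\chi_j)=\chi_j\,d\rho_x+\rho_x\,d\chi_j$ with $\chi_j\,d\rho_x\to d\rho_x$ in $L^2$ and $\|\rho_x\,d\chi_j\|_{L^1}\le\|\rho_x\|_{L^2(X,\omega_X^n)}\|d\chi_j\|_{L^2(X,\omega_X^n)}\to 0$. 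Hence $\rho_x\in W^{1,1}(X)$ with distributional differential equal to the $L^2$ form of the first step, and therefore $\rho_x\in W^{1,2}(X,\omega_X^n)$.

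I expect the main obstacle to be the $L^2$-estimate of the second paragraph: one must extract a genuine gain of integrability from a mere finite-Dirichlet-energy bound near $E$, with constants uniform across all dyadic scales and stable under the iteration over the branches of $E$. This is precisely where the one-dimensional Hardy inequality is indispensable — a naive telescoping of annular averages would only yield the crude bound $\rho_x=O(\dist(\cdot,E)^{-1})$, which fails to be $L^2$ — and it is the point at which the argument genuinely extends \cite[Lemma~1.3]{DPS} beyond the bounded case.
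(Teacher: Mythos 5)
Your overall architecture is sound and your first and third paragraphs essentially coincide with what the paper does: the Dirichlet energy bound $\int_{X^\circ}|\nabla_{\omega_X}\rho_x|^2\,\omega_X^n\le n\int_X T\wedge\omega_X^{n-1}$ is exactly the cohomological estimate used in the paper, and your capacity cut-off argument (log-capacitors around the codimension-two set $E$, with $\|d\chi_j\|_{L^2}\to 0$) is the same mechanism as the paper's Step 2, which uses cut-offs with $d\chi_\delta\wedge d^c\chi_\delta\le\omega_P$ for a Poincar\'e-type metric $\omega_P$ to show that $d\rho_x$ does not charge $E$. Where you genuinely diverge is the $L^2$ bound on $\rho_x$ itself. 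The paper follows Demailly--Peternell--Schneider: it bounds $\int_{K\times K}d_\omega(x,y)^2$ by $\int_{K\times K}\ell_\omega(L_{x,y})^2$ over straight segments $L_{x,y}$ (which avoid $E$ for a.e.\ pair $(x,y)$), and after Cauchy--Schwarz and a change of variables this is controlled by $\int_X T\wedge\omega_X^{n-1}$; the triangle inequality then upgrades ``$\rho_x\in L^2$ for a.e.\ $x$'' to ``for every $x\in X^\circ$''. This is self-contained, purely cohomological, and does not use the gradient bound at all. Your route instead deduces $L^2$ from the finite Dirichlet energy near $E$, and it can be made to work, but as written it is only a sketch and the delicate point is not quite where you place it: the dyadic annuli $\{2^{-k-1}\le|z_1|\le 2^{-k}\}\times\Delta^{n-1}$ are product regions of diameter $O(1)$, so the ``scale-invariant'' Poincar\'e inequality you invoke does not directly produce the gain you need; one must first average over the transverse polydisc (the sliced average has finite Dirichlet energy on a genuinely two-dimensional punctured disc by Cauchy--Schwarz) and only then run the annular telescoping, which yields growth $O(\sqrt{-\log|z_1|})$ for the averages and hence square-integrability. (Incidentally, your parenthetical claim that naive telescoping only gives $O(\operatorname{dist}(\cdot,E)^{-1})$ is not accurate once the reduction to two real dimensions is done.) In short: your proof is a legitimate alternative whose payoff is that it never mentions segments or the double integral, at the cost of a more delicate local analysis near $E$; the paper's segment trick buys an explicit, uniform bound $\|\rho_x\|_{L^2}^2\lesssim[\theta]\cdot[\omega_X]^{n-1}$ with no dependence on $\sup_K\rho_x$.
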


\begin{proof}
We proceed in two steps. 

\medskip

\noindent {\it Step 1:  $\rho_x\in L^2(X)$}. 
This step is very similar to \cite[Lemma~1.3]{DPS}, and we will explain how to deal with the singularities near $E$. 
We pick two open neighborhoods $E\subset U \Subset U'$ of $E\subset X$ such that $U'=\cup_{i=1}^N U_i'$ is covered by finitely many coordinate charts $U_i'\simeq \Delta^n$ where $\Delta:=\{z\in \mathbb C; |z|<2\}$ and $E\cap U_i'$ is a union of hyperplanes. Next, on can assume that each $\overline U_i:=\overline U\cap U_i'$ is  the convex set $K:=[-1, 1]^{2n}$ under the natural embedding $[-1,1]^2\subset \Delta$. We let $C>0$ be such under the suitable identifications, we have $C^{-1} \omega_{\rm eucl} \le \omega_X|_{U_i'} \le C \omega_{\rm eucl}$ where $\omega_{\rm eucl}$ is the standard euclidean metric on $\mathbb C^n$.

Now, let us pick $x_0\in X^\circ$.Clearly, $\rho_{x_0}$ is bounded on $X\setminus U$, so it is enough to show that ${\rho_{x_0}}|_{U_i}$ is $L^2$ for each $i$. From now on, we fix $i$ and we work exclusively on $U_i\simeq K$  . Given $x,y\in K$, consider the line $L_{x,y}=\{tx+(1-t)y, t\in [0,1]\}$. We claim that the set $S:=\{(x,y)\in K\times K; L_{x,y}\cap E \neq \emptyset\}$ is included in a finite union of proper real analytic subvarieties of $K\times K$. Indeed, if we write $x=(u_1, v_1, \ldots, u_n, v_n)$ and let $k$ be such that $E\cap U'_i=(z_1\cdots z_k=0)$, then 
\[S\subset (E\times K) \cup (K\times E) \cup \bigcup_{j=1}^k \{(x,x')|u_jv'_j-v_ju'_j=0\},\]
which shows the claim. 

%  Let $dV_\omega^{\otimes 2}=p_1^*\omega^n\otimes p_2^*\omega^n$ be the product Kähler-Einstein volume form on $X\times X$ where $p_i$ are the two projections onto $X$. Since $dV_\omega^{\otimes 2}$ is absolutely continuous with respect to $p_1^*\omega_X^n\otimes p_2^*\omega_X^n$, we have $\int_S dV_\omega^{\otimes 2}=0$. 

Let $dV^{\otimes 2}= p_1^*\omega_{\rm eucl}^n\otimes p_2^*\omega_{\rm eucl}^n$ (and similarly for $dV_X$) be the product volume form on $K\times K$ where $p_i$ are the two projections onto $K$.  We have $\int_S dV^{\otimes 2}=0$. For $x,y\in K\times K\setminus S$, we have $d_\omega(x,y)\le \ell_\omega(L_{x,y})$ where $\ell_\omega$ means the length with respect to $\omega$. 

Therefore, we have
\begin{equation}
\label{comp int}
\int_{K\times K} d_\omega^2(x,y)dV^{\otimes 2}(x,y) \le \int_{K\times K} \ell_{\omega}^2(L_{x,y}) dV^{\otimes 2}(x,y)
\end{equation}
where both integrals are unambiguously defined thanks to the previous observation. As a side note, we should say that the choice of $S$ as before makes the presentation a bit cleaner but we could just as well have worked with $S=E\times E$ since for $(x,y)\notin E\times E$, the line $L_{x,y}$ meets $E$ finitely many times so we can still define $\ell_\omega(L_{x,y})$. 

Using Cauchy-Schwarz inequality it suffices to bound from above the   integral
\[I=\int_{K\times K}\int_{0}^1 \omega_{tx+(1-t)y}(x-y)dtdV^{\otimes 2}(x,y) , \]
where $x-y$ is seen as a (constant) tangent vector. Let $h=\mathrm{tr}_{\om_{\rm eucl}} \omega$. Using the elementary inequality $\omega \le \mathrm{tr}_{\om_{\rm eucl}} \omega \cdot \om_{\rm eucl}$, we get
\begin{eqnarray*}
I &\le & \int_{0}^1 dt \int_{K\times K} \|x-y\|_{\rm eucl}^2 h(tx+(1-t)y)dV^{\otimes 2}(x,y)\\
&\le & 4\int_{0}^1 dt \int_{K\times K} h(tx+(1-t)y)dV^{\otimes 2}(x,y).
\end{eqnarray*}
Now, given $x\in K$ and $t\in [0,\frac{1}{2}]$, consider the linear isomorphism $u:\mathbb C^n\to \mathbb C^n$ defined by $u(y)=tx+(1-t)y$. We have $u(K)\subset K$ hence the change of variable yields 
\[\int_0^{1/2} dt \int_{y\in K}h(tx+(1-t)y) \om_{\rm eucl}^n(x) \le 2^{2n-1}\int_K h(u)\om_{\rm eucl}^n(u).\]
The symmetric operation  yields
$I\le 2^{2n} \mathrm{vol}_{\rm eucl}(K) \int_Kh\, \om_{\rm eucl}^n$
when $t\in [\frac 12 ,1]$.
Now, since $\mathrm{tr}_{\om_{\rm eucl}} \omega \cdot \omega_{\rm eucl}^n=n\omega \wedge \omega_{\rm eucl}^{n-1}$, we obtain 
%\begin{eqnarray*}
%I&\le& n2^{4n} \int_K\omega \wedge \omega_{\rm eucl}^{n-1}\\
%&\le & n2^{4n}C^{n-1} \int_X T \wedge \om_X^{n-1}.
%\end{eqnarray*}
$$
I \le  n2^{4n} \int_K\omega \wedge \omega_{\rm eucl}^{n-1}
\le  n2^{4n}C^{n-1} \int_X T \wedge \om_X^{n-1}.
$$
By \eqref{comp int}, this implies that
\[\int_{U\times U}d_\omega^2(x,y)dV_X^{\otimes 2}(x,y) \le n2^{4n}C^{3n-1}N \int_X T \wedge \om_X^{n-1},\]
hence $\rho_x$ is in $L^2(\omega_X^n)$ 
for almost every $x\in X^\circ$. 
Now, given two points $x,x' \in X\setminus E$, the triangle inequality yields 
$|\rho_x-\rho_{x'}|\le d_\omega(x,x')$
 hence  $\rho_x$ is in $L^2(\omega_X^n)$ for every $x\in X^\circ$. \\
%Since $\omega_X \ge \pi^*\omega_V$, the lemma follows. 

\medskip

\noindent {\it Step 2: $d\rho_x\in L^2(X)$.} 
This amounts to showing that the current $d\rho_x$ does not charge $E$. It is standard (see e.g. \cite[\textsection  9]{CGP} )
to construct a family of cut-off functions $(\chi_\delta)$ with values in $[0,1]$ such that 
\begin{enumerate}[label=$\bullet$]
\item $\chi_\delta \equiv 1$ near $E$ and $\mathrm{Supp}(\chi_\delta)$ converges to $E$ when $\delta\to 0$.
\item $d\chi_\delta\wedge d^c \chi_\delta \le \omega_P$ where $\omega_P$ is a Kähler metric on $X\setminus E$ with Poincaré type growth near $E$. 
\end{enumerate}
%We refer to for the explicit construction of $\chi_\delta$.
 Let $S_\delta$ be the support of $d\chi_\delta$. Let $\alpha$ be any smooth $(2n-1)$-form on $X$; up to replacing $\omega_X$ with a large multiple, we have
\begin{eqnarray*}
\left|\int_X \chi_\delta d\rho_x \wedge \alpha \right| & \le & \int_{S_\delta} \rho_x \omega_X^{n}+\left|\int_X \rho_x d\chi_\delta\wedge \alpha\right| \\
  &\le & \int_{S_\delta} \rho_x\omega_X^n+\int_{S_\delta}  \rho_x (\mathrm{tr}_{\omega_X} \omega_P)^{\frac 12} \omega_X^n \\
  &\le &  \int_{S_\delta} \rho_x\omega_X^n+ \int_{S_\delta} \rho_x^2\omega_X^n+ \int_{S_\delta} n\omega_P\wedge \omega_X^{n-1}.
\end{eqnarray*} 
Since $\rho_x\in L^2(\omega_X^n)$ by the first step and $\omega_P\wedge \omega_X^{n-1}$ puts no mass on $E$, 
%(e.g. it is dominated by $\omega_P^n$ which has finite volume), 
the RHS converges to zero when $\delta \to 0$, hence the lemma.
\end{proof}

Combining Lemma~\ref{dist L2} and Yang Li's results \cite[Theorem~4.1]{Li20}, we can drastically improve the integrability properties of $\rho_x$: 

\begin{lem}
\label{exp}
In the Setup~\ref{setup}, there exists $\alpha>0$ such that for any $x\in X^\circ$, 
%we have 
$e^{\alpha \rho_x} \in L^1(X, \omega_X^n)$.  
\end{lem}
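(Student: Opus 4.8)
The plan is to feed the two facts now available about $\rho_x$ — that it is $1$-Lipschitz for $\omega$, and that it lies in $W^{1,2}(X,\omega_X^n)$ by Lemma~\ref{dist L2} — into Yang Li's integrability improvement \cite[Theorem~4.1]{Li20}. First I would observe that, $y\mapsto\rho_x(y)=d_\omega(x,y)$ being $1$-Lipschitz for the Riemannian metric attached to $\omega$, one has the pointwise bound $d\rho_x\wedge d^c\rho_x\le\omega$ on $X^\circ$, and trivially $\rho_x\ge 0$. The first task is then to promote this to a global statement on $X$: by Lemma~\ref{dist L2} we know $\rho_x\in W^{1,2}(X,\omega_X^n)$ and, as shown in its proof, the current $d\rho_x$ carries no mass on $E$; since $d\rho_x\in L^2(X,\omega_X^n)$, the positive current $d\rho_x\wedge d^c\rho_x$ has density in $L^1(X,\omega_X^n)$ and likewise puts no mass on $E$, so the inequality valid on the dense open set $X^\circ$ extends to $d\rho_x\wedge d^c\rho_x\le T$ as positive $(1,1)$-currents on all of $X$.

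Next I would invoke \cite[Theorem~4.1]{Li20}, which is precisely the integrability upgrade needed: a non-negative function $u\in W^{1,2}(X,\omega_X^n)$ whose $du\wedge d^c u$ is dominated by a closed positive current of finite mass (Kähler on a dense open set) satisfies $\int_X e^{\alpha u}\,\omega_X^n<+\infty$ for some $\alpha>0$, with $\alpha$ and the value of the integral controlled only by $(X,\omega_X)$ and $\int_X T\wedge\omega_X^{n-1}$. Applying this to $u=\rho_x$ and the current $T$ yields $e^{\alpha\rho_x}\in L^1(X,\omega_X^n)$ with $\alpha$ independent of $x\in X^\circ$. Under the hood this is a John-Nirenberg type argument: combining $d\rho_x\wedge d^c\rho_x\le T$ with Chern-Levine-Nirenberg estimates on small $\omega_X$-balls and the Poincaré inequality shows that $\rho_x$ obeys a BMO-type mean-oscillation bound at scale $r^{2n}$, which self-improves to uniform exponential integrability.

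The main obstacle is, as everywhere in this part of the paper, the behaviour near the singular divisor $E$: a priori $(X^\circ,\omega)$ is incomplete, $\rho_x$ need not be bounded near $E$, and the Lipschitz bound is only available on $X^\circ$, so the local integrability machinery cannot be applied naively. What dissolves the difficulty is exactly Lemma~\ref{dist L2}: it says $\rho_x$ extends to an honest $W^{1,2}$ function on $X$ whose gradient does not concentrate along $E$, which is the hypothesis under which Li's theorem (equivalently, the Campanato/John-Nirenberg estimates) produces a uniform exponential integrability constant. Everything else is routine, and the independence of $\alpha$ on $x$ comes for free from the uniformity of these constants, which depend only on $(X,\omega_X)$ and $\int_X T\wedge\omega_X^{n-1}$.
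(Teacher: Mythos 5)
Your argument is correct and follows essentially the same route as the paper: feed the $W^{1,2}$ extension of $\rho_x$ across $E$ from Lemma~\ref{dist L2} and the $O(r^{2n-2})$ bound on $\int_{B(r)}T\wedge\omega_X^{n-1}$ into the BMO/John--Nirenberg mechanism of \cite[Theorem~4.1]{Li20}. The only (harmless) over-statement is that the value of $\int_X e^{\alpha\rho_x}\,\omega_X^n$ is uniformly controlled: what is uniform is $\alpha$ and the mean-oscillation bound, while the integral itself involves $\int_X\rho_x\,\omega_X^n$, which need not be bounded independently of $x$ a priori --- but the lemma only asserts finiteness for each $x$, so this does not affect the proof.
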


\begin{proof}
The proof is identical to that of \cite[Theorem~4.1]{Li20}, so we'll only briefly sketch it. Given a small ball of radius $r$ in $X$, it follows from Lelong's results that the masses $\frac{1}{r^{2n-2}}\int_{B(r)} T\wedge \om_X^{n-1}$ increase with $r$ so that
\[\int_{B(r)} T \wedge \omega_X^{n-1} \le C r^{2n-2}\]
 where $C$ depends only on $X$ and the cohomology class of $T$. On $X^\circ$, we have $|d\rho_x|^2_{\omega_X} \le \mathrm{tr}_{\om_{X}}\om \cdot |d\rho_x|^2_\omega= \mathrm{tr}_{\om_{X}}\om$ hence $\int_{B(r)} |d\rho_x|^2_{\omega_X} \om_X^n \le Cr^{2n-2}$. Since $d\rho_x \in L^2(X)$ by Lemma~\ref{dist L2}, the previous inequality holds for any balls (even those centered on $E$) and we can appeal to the Poincaré inequality for small balls on $(X,\om_X)$  to infer that $\rho_x$ has bounded mean oscillation, hence John-Nirenberg inequality yields $\alpha>0$ such that
 \[\int_Xe^{\alpha(\rho_x-\int_X \rho_x \om_X^n)}\om_X^n<+\infty,\]
which concludes the proof.
\end{proof}

\subsection{Control on the diameter}

So far we have assumed nothing on the regularity of $T$ near $E$. 
If the potential $\varphi$ of $T$ is continuous and  its modulus of continuity $m_\f$ with respect to $d_{\om_X}$ satisfies the condition $m_1(r):=\int_0^r t^{-1}\sqrt{m_{\varphi}(t)} dt<+\infty$ (e.g. if $\varphi$ is Hölder continuous), then the proof of Proposition~\ref{pro:diameterYLi} goes through verbatim and shows that $d_\omega\le m_1 \circ d_{\om_X}$ on $X^\circ \times X^\circ$. In particular, $\mathrm{diam}(X^\circ,\omega)<+\infty$. 

The next result provides a sharp criterion to guarantee that this diameter is finite.

\begin{thm}\label{thm:diameter general}
In the Setup~\ref{setup}, assume that there exists $\delta>0$ such that the modulus of continuity $m_\varphi$ of $\varphi$ satisfies $m_\varphi(r)\le \frac{C}{(\log(-\log r))^{1+\delta}}$ for some $C>0$. Then we have 
\[\mathrm{diam}(X^\circ, \omega)<+\infty.\]
\end{thm}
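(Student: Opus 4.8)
The plan is to run the argument of Theorem~\ref{thm:general} in the present, possibly singular, setting, the hypothesis on $m_\varphi$ now playing the role that the Orlicz bound (via Theorem~\ref{thm:modulus1}) played there. First I would fix a base point $x_0\in X^\circ$ and set $f:=\rho_{x_0}=d_\omega(x_0,\cdot)$. By Lemma~\ref{dist L2}, $f\in W^{1,2}(X,\omega_X^n)$; in particular $df\wedge d^cf$ has $L^1$ coefficients and puts no mass on $E$. Since $f$ is $1$-Lipschitz for $d_\omega$ one has $0\le df\wedge d^cf\le\omega$ on $X^\circ$, hence $df\wedge d^cf\le T$ as currents on $X$; and by Lemma~\ref{exp}, $e^{\alpha f}\in L^1(X,\omega_X^n)$ for some $\alpha>0$, so $f\in L^p(X)$ for every $p<\infty$. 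It suffices to bound $f$ uniformly on $X^\circ$, and since $f(x_0)=0$ this amounts to a uniform bound on $f(x)$, $x\in X^\circ$.

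Next I would use the approximate Green function from the proof of Theorem~\ref{thm:general}, built from a fixed finite covering of $X$ by coordinate charts (which in particular covers $E$): cut-offs $\chi_j$ and $g_x=\chi_{j(x)}\bigl(\log|z-x|-C\bigr)$, with $C$ uniform so that $g_x\le-1$ and, after rescaling $\omega_X$, $g_x\in\PSH(X,\omega_X)$. One has the same identity $(\omega_X+dd^cg_x)^n=\omega_X^n+\delta_x+\Theta_x+dd^cg_x\wedge\omega_X\wedge S_x$, with $\Theta_x$ smooth and $S_x$ a fixed positive combination of the $\omega_{g_x}^\ell\wedge\omega_X^{n-2-\ell}$; since the charts are fixed, all constants below will be independent of $x\in X^\circ$, even as $x$ approaches $E$. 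Pairing with $f$ (licit because $f$ is continuous, hence bounded, near $x$ and $x_0$ and belongs to $L^p(X)$) and subtracting the identity at $x_0$, I would obtain
\[
f(x)=\int_X f\,[\omega_{g_x}^n-\omega_X^n]-\int_X f\,[\omega_{g_{x_0}}^n-\omega_X^n]+\int_X f\,dd^cg_{x_0}\wedge\omega_X\wedge S_{x_0}-\int_X f\,dd^cg_x\wedge\omega_X\wedge S_x+\int_X f\,[\Theta_{x_0}-\Theta_x],
\]
and bound each term uniformly in $x$.

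I would then carry out the estimates of Theorems~\ref{thm:projective} and~\ref{thm:general}. For the last term, write $\Theta_x-\Theta_{x_0}=dd^c\eta(x,x_0)$ with $\eta$ smooth in all variables, integrate by parts against a local basis of closed positive $(n-1,n-1)$-forms, and apply Cauchy--Schwarz with $df\wedge d^cf\le T$ to bound it by $C\bigl(\int_XT\wedge\omega_X^{n-1}\bigr)^{1/2}=C([\theta]\cdot[\omega_X]^{n-1})^{1/2}$. For the remaining terms, expand $\omega_{g_x}^n-\omega_X^n=dd^cg_x\wedge\sum_{k=0}^{n-1}\omega_{g_x}^k\wedge\omega_X^{n-1-k}$ (and likewise for the $S_x$-terms), integrate by parts, and apply the twisted Cauchy--Schwarz inequality with a convex weight $\chi$ of slightly sublinear growth at $-\infty$: the $\chi''\circ g_x$-factor is controlled by $\omega_X+dd^c(\chi\circ g_x)\ge\chi''\circ g_x\,dg_x\wedge d^cg_x$ pushed to cohomology, and the square of the complementary factor is $\le\int_X\psi_x\,\omega\wedge\omega_{g_x}^k\wedge\omega_X^{n-1-k}$ (using $df\wedge d^cf\le\omega$), where $\psi_x=(\chi''\circ g_x)^{-1}=h\circ g_x$ and $h(t)=|t|(\log|t|)^{1+\gamma}$, $0<\gamma<\delta$. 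Writing $\omega=\theta+dd^c\varphi$, the $\theta$-contribution is finite because $k\le n-1$ and locally $\omega_{g_x}\le C\,e^{-2g_x}\omega_X$ (cf.\ \cite{DGG23}), while the $dd^c\varphi$-contribution is unchanged by adding a constant to $\varphi$: normalizing $\varphi(x)=0$, the hypothesis gives $|\varphi(z)|\le m_\varphi(d_{\omega_X}(x,z))\le C\bigl(\log(-\log d_{\omega_X}(x,z))\bigr)^{-1-\delta}\lesssim\bigl(\log(-g_x(z))\bigr)^{-1-\delta}$ at every point of $X$, the divisor causing no trouble precisely because $m_\varphi$ is a modulus of continuity on all of $X$. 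Integrating by parts once more to $\int_X\varphi\,dd^c\psi_x\wedge\omega_{g_x}^k\wedge\omega_X^{n-1-k}$ and using $h'\circ g_x\sim(\log(-g_x))^{1+\gamma}$ and $h''\circ g_x\sim(\log(-g_x))^{\gamma}/(-g_x)$, one finds $\varphi\,h'\circ g_x$ bounded and $\varphi\,h''\circ g_x\lesssim(\log(-g_x))^{\gamma-1-\delta}/(-g_x)$, so that in polar coordinates the decisive integral is $\int_0\frac{dr}{r(-\log r)(\log(-\log r))^{1+\delta-\gamma}}$, which converges because $\delta-\gamma>0$.

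The hard part will not be conceptual but a matter of care. One must justify the integrations by parts where $g_x$ has a logarithmic pole at $x$ and where the integrands are merely $L^1$; this can be done, exactly as in Case~2 of the proof of Theorem~\ref{thm:projective}, by working with the bounded truncations $\max(g_x,-C)$ and letting $C\to+\infty$, and by using that near $x$ (the only genuinely singular locus of $g_x$) the function $f$ is bounded. One must also check that the divisor $E$ contributes nothing extra, which it does not: $df\wedge d^cf$ puts no mass on $E$ (Lemma~\ref{dist L2}), $\int_{X^\circ}\omega\wedge\beta=\int_XT\wedge\beta<+\infty$ for every smooth closed $(n-1,n-1)$-form $\beta$, and the bound on $m_\varphi$ holds on all of $X$. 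Once $\sup_{X^\circ}f<+\infty$ is obtained for one (hence every) base point, $\mathrm{diam}(X^\circ,\omega)<+\infty$ follows.
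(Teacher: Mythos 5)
Your proposal is correct and follows essentially the same route as the paper: reduce to the kernel identity and twisted Cauchy--Schwarz estimates of Theorems~\ref{thm:projective} and~\ref{thm:general}, using Lemma~\ref{dist L2} to upgrade $df\wedge d^cf\le\omega$ on $X^\circ$ to $df\wedge d^cf\le T$ weakly on $X$ and to justify the integrations by parts across $E$, with the hypothesis on $m_\varphi$ replacing Theorem~\ref{thm:modulus1}. The only cosmetic difference is that you bound $\int_X f\,[\Theta_{x_0}-\Theta_x]$ via the Hodge-theoretic $dd^c\eta$ trick of Theorem~\ref{thm:general}, whereas the paper simply uses $\pm\Theta_x\le C\,\omega_X^n$ together with $f\in L^1(\omega_X^n)$; both work.
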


\begin{proof}
The proof is essentially the same as that of Theorems~\ref{thm:projective} and \ref{thm:general} but we need to treat the integrations by parts carefully here. Let us fix a point $x_0\in X^\circ$ and consider $f:=\rho_{x_0}$. We know that $f$ is Lipschitz on $X^\circ$ and satisfies $df\wedge d^c f \le \omega$ on that open set. Thanks to Lemma~\ref{dist L2}, we know that $f\in W^{1,2}(X)$; in particular, the previous inequality extends to the inequality 
\begin{equation}
\label{weak gradient}
df\wedge d^c f \le T \quad {\rm weakly \, on \,\,\, } X.
\end{equation}

Given \eqref{kernel} and \eqref{difference}, we have for all $x\in X^\circ$
\begin{equation}
\label{kernel2}
f(x)=\sum_{k=0}^{n-1}c_k\int_X fdd^c g_x\wedge \omega_{g_x}^k \wedge \omega_X^{n-1-k}-\int_X f \Theta_x
\end{equation}
where $c_k\in \mathbb R$ and $\Theta_x$ is a smooth $(n,n)$-form varying continuously with $x$. 
In particular, there exists $C>0$ independent of $x$ such that $\pm\Theta_x \le C \omega_V^n$ so that $\left| \int_X f \Theta_x\right| \le C \|f\|_{L^1(\omega_X^n)}$. We are left to controlling the integrals 
\[J_{k,x}:=\int_X fdd^c g_x\wedge \omega_{g_x}^k \wedge \omega_X^{n-1-k}.\]
% uniformly in $x$. 
Note that the current $S_k:=fd^cg_x\wedge \omega_{g_x}^k \wedge \omega_X^{n-1-k}$ is well-defined for all $k$. 
Now $\int_X dS_k=0$, hence
\begin{equation}
\label{stokes}
J_{k,x}   =- \int_X d f \wedge d^c g_x \wedge \omega_{g_x}^k \wedge \omega_{X}^{n-1-k}.
\end{equation}
Since $df\in L^2$ we can use Cauchy-Schwarz and \eqref{weak gradient} to obtain
\[|J_{k,x}|\le  \left| \int_X \chi'' \circ g_x \, d g_x \wedge d^c g_x \wedge \omega_{g_x}^k  \wedge \omega_{X}^{n-1-k} \right|^{1/2}\left| \int_X (\chi'' \circ g_x)^{-1} \,T \wedge \omega_{g_x}^k \wedge \omega_{X}^{n-1-k} \right|^{1/2}\]
for any smooth convex function $\chi$. 

The proof of Theorem~\ref{thm:projective} then goes through without a change, once we observe that one can perform the integration by parts \eqref{IBP 1} and \eqref{IBP 2} just like in the smooth case. Indeed, with the notation of \eqref{IBP 1}, $T_k$ and $\psi_x$ are smooth away from $x$ (hence near $E$, too)  and $\vp \in L^1(X, \om_X^n)$ is smooth near $x$ so that the current  $(\vp \bar \partial \psi_x+\psi_x\partial \vp)\wedge T_k$ is well-defined globally on $X$. 
\end{proof}

 \subsection{Application to singular K\"ahler-Einstein metrics} \label{sec:KE}

  \subsubsection{Intrinsic distance} \label{sec:Loja}

We assume here that $V \subset \PP^N(\C)$ is 
a closed irreducible analytic subvariety.
We let $\Gamma(V)$ denote the set of continuous
arcs $\gamma:[0,1] \rightarrow V$ that are
piecewise ${\mathcal C}^1$-smooth, and set
$$
\ell(\gamma):=\int_0^1 \|\gamma'(t)\|_{\omega_{\rm FS}}dt.
$$

\begin{defi}
For $(x,y) \in V$, we set
$$
d_V(x,y)=\inf \left\{ \ell(\gamma), \; \gamma \in \Gamma(V)
\text{ with } \gamma(0)=x \text{ and } \gamma(1)=y \right\}.
$$
\end{defi}

It is a classical fact that $d_V$ is a distance
such that $d_{\PP^N} \le d_V$. It is perhaps less known that 
one can also bound $d_V$  from above as follows.

\begin{prop} \label{pro:Loja}
There exists $C>0$ and $\alpha \in (0,1]$ such that
$$
d_{\PP^N} \le d_V \le C d_{\PP^N}^{\alpha}.
$$
\end{prop}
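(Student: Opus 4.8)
The lower bound $d_{\PP^N}\le d_V$ is immediate since any arc in $V$ is in particular an arc in $\PP^N$, so the entire content is the upper bound $d_V\le C\,d_{\PP^N}^\alpha$. The plan is to reduce to a local statement near each point of $V$ and then invoke the curve selection / Łojasiewicz machinery for analytic sets. Since $V$ is compact, it suffices to prove that every $p\in V$ has a neighborhood $U_p$ in $\PP^N$ and constants $C_p>0$, $\alpha_p\in(0,1]$ such that any two points $x,y\in V\cap U_p$ can be joined \emph{inside $V$} by a piecewise-$\mathcal C^1$ arc of $\omega_{\rm FS}$-length at most $C_p\,\|x-y\|^{\alpha_p}$ (distances measured in an affine chart, which is bi-Lipschitz to $d_{\PP^N}$ locally). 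A standard compactness/chaining argument then upgrades this to the global bound: if $d_{\PP^N}(x,y)$ is larger than the Lebesgue number of the cover $\{U_p\}$, then $d_V(x,y)$ is bounded by the (finite) diameter of $V$, which is in turn $\le C\,d_{\PP^N}(x,y)^\alpha$ after adjusting the constant; and if $d_{\PP^N}(x,y)$ is small, $x$ and $y$ lie in a common $U_p$ and the local estimate applies with $\alpha=\min_p\alpha_p$.

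For the local statement I would work in an affine chart $\C^N$ with $p=0$, so that $V$ is (a germ of) a complex-analytic subvariety through the origin. The key input is the classical fact — essentially Łojasiewicz's inequality for the distance to an analytic set, or equivalently the local conic/bi-Hölder structure of analytic germs — that there exist $C,\alpha$ so that $0$ can be joined to any nearby $x\in V$ by a real-analytic arc in $V$ of length $\le C\|x\|^\alpha$. One clean way to see this: triangulate (or stratify) $V$ near $0$ so that $V\cap U_p$ is a finite union of cells each of which is bi-Hölder homeomorphic, with Hölder data controlled, to a Euclidean simplex (the existence of such a structure for semianalytic/subanalytic sets is standard — e.g. via the local conic structure theorem, or Parusiński/Valette metric triangulation results). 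On each cell the straight segments pull back to rectifiable arcs whose length is controlled by a fixed power of the Euclidean distance; concatenating through the finitely many cells of the triangulation, and using that $V\cap U_p$ is connected after possibly shrinking (or treating each local irreducible component separately and joining them through $0$), gives the desired arc from $x$ to $y$. Then $\ell(\gamma)$ measured with $\omega_{\rm FS}$ is comparable on $U_p$ to Euclidean length, so the bound transfers.

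The main obstacle is purely the local metric geometry of analytic singularities: producing, with uniform Hölder control, a path inside $V$ — not just inside $\PP^N$ — between two nearby points. Away from $V_{\rm sing}$ this is trivial (the ambient geodesic stays in a coordinate chart of the manifold $V_{\rm reg}$ and one uses that a submanifold is locally bi-Lipschitz embedded), so the real work is at singular points, where one genuinely needs the subanalytic bi-Hölder triangulation (or the Łojasiewicz curve-selection estimate) rather than elementary arguments. Once that structural theorem is quoted, the remaining steps — passing from the pointed estimate ``$0$ to $x$'' to ``$x$ to $y$'', transferring between the affine metric and $\omega_{\rm FS}$, and the global chaining — are routine. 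I would therefore state the local bi-Hölder triangulation as the cited black box and present the reduction and chaining in detail.
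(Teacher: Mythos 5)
Your argument is correct in substance, but it takes a genuinely different route from the paper, and it is worth comparing the two. The paper sidesteps all local analysis: it uses the $U(N+1)$-equivariant real-algebraic embedding $L\mapsto P_L$ of $\PP^N(\C)$ into the space of Hermitian matrices (sending a line to the orthogonal projection onto it), notes that by equivariance the pulled-back Euclidean metric is a constant multiple of $ds^2_{\rm FS}$, and then applies a single global theorem of Lojasiewicz (1965): any two points of a compact connected semialgebraic subset $W\subset\R^M$ can be joined by a piecewise analytic arc in $W$ of length at most $K\|x-y\|^{\alpha}$. Since $f(V)$ is compact, connected and semialgebraic, this gives the two-point estimate in one shot, with no chaining and no case analysis at singular points. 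Your local-to-global scheme is viable and in fact more flexible — it only uses the local subanalytic structure of $V$, so it would apply to compact analytic subsets of an arbitrary compact complex manifold, not just projective ones — but it pushes the real difficulty into the local two-point estimate, where you should be more careful than you are. The delicate case is two nearby points $x,y$ lying on different local irreducible branches of the germ of $V$ at $p$ that meet only at $p$: routing the path through $p$ gives length $O(\|x-p\|^{\alpha}+\|y-p\|^{\alpha})$, and converting this into a power of $\|x-y\|$ requires Łojasiewicz's regular separation of the branches (which bounds $\|x-y\|$ from below by a power of the distance to the intersection locus). This is subsumed by the metric triangulation theorems you cite as a black box, but it is exactly the point where an ``elementary'' local argument would fail, so it deserves to be named. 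You should also note that the finiteness of ${\rm diam}(V,d_V)$, used in your chaining step for distant pairs, itself follows from connectedness of the irreducible $V$ together with the local estimates. In short: both proofs rest on a Łojasiewicz-type structural theorem of comparable depth; the paper's global algebraic embedding makes the reduction one line long, while your localization buys generality at the cost of having to handle branch separation and chaining explicitly.
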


We thank Philippe Eyssidieux for providing the references.
Of course one can take $\alpha=1$ if $V$ is smooth, while 
one necessary has $\alpha<1$ already for cusps in dimension $1$.

\begin{proof}
Consider the $U(N+1)$-equivariant real-algebraic embedding
$$
f:L \in \PP^N(\C) \rightarrow P_L \in M_{N+1}(\C),
$$
which sends a complex line $L$ to $P_L$, the orthogonal projection to $L$.
By equivariance we have $f^* ds^2_{M_{N+1}(\C)}=A ds^2_{\rm FS}$, hence
$d_{\PP^N}$ and the pull-back of the euclidean metric $f^*d_{\rm eucl}$
are biLipschitz equivalent. 

Since $f(V)$ is a compact connected semialgebraic subset of $M_{N+1}(\C)$,
the result therefore follows from a result of Lojasiewicz that
we state below.
\end{proof}

\begin{thm} [Lojasiewicz 65]
Let $W \subset \R^N$ be a compact connected semialgebraic set.
There exists $K>0$ and $\alpha>0$ such that for every 
$(x,y) \in W$, one can find a piecewise analytic arc drawn on $W$
and joining $x$ to $y$, whose length is less than $K \|x-y\|^{\alpha}$.
\end{thm}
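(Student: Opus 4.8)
The plan is to reduce the theorem to a single application of the \emph{Łojasiewicz inequality} on the compact semialgebraic set $W\times W$, comparing the two semialgebraic functions $f(x,y)=\|x-y\|$ and the \emph{inner distance}
\[
\widetilde d_W(x,y):=\inf\bigl\{\,\ell(\gamma)\ :\ \gamma\text{ a piecewise-analytic arc drawn on }W\text{ from }x\text{ to }y\,\bigr\},
\]
where $\ell$ denotes euclidean length. Granting that $\widetilde d_W$ is a finite, continuous, semialgebraic function whose zero set is exactly the diagonal $\{x=y\}=f^{-1}(0)$, the Łojasiewicz inequality furnishes $N\in\N^*$ and $c>0$ with $\widetilde d_W(x,y)^N\le c\,\|x-y\|$ on $W\times W$, hence $\widetilde d_W(x,y)\le c^{1/N}\|x-y\|^{1/N}$. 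For $x\neq y$ one then selects a piecewise-analytic arc on $W$ joining $x$ to $y$ of length at most $2c^{1/N}\|x-y\|^{1/N}$ (the infimum need not be attained, so one takes a near-minimizer); this proves the statement with $\alpha=1/N$ and $K=2c^{1/N}$, the case $x=y$ being trivial.

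It remains to justify the three properties of $\widetilde d_W$. Finiteness: a connected semialgebraic set is semialgebraically path-connected, and a continuous semialgebraic map $[0,1]\to W$ is piecewise analytic after a suitable semialgebraic subdivision of $[0,1]$ (monotonicity/cell decomposition), so any two points of $W$ are joined by a finite-length piecewise-analytic arc. Semialgebraicity of $\widetilde d_W$ is the technical heart of the argument: using a semialgebraic triangulation of $W$ together with Tarski--Seidenberg one describes the relevant families of arcs on $W$ by finitely many real parameters and realises $\widetilde d_W$ as the infimum of a semialgebraic length function over semialgebraic parameter fibres (alternatively one may invoke the known fact that the inner metric of a bounded semialgebraic set is semialgebraic, or Kurdyka's $L$-regular cell decomposition). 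Continuity then follows from semialgebraicity by a curve-selection argument: by the triangle inequality it suffices to show $\widetilde d_W(x_0,x')\to 0$ as $x'\to x_0$ in $\R^N$; if this failed, $x_0$ would lie in the euclidean closure of the semialgebraic set $\{z\in W:\widetilde d_W(x_0,z)\ge\delta\}$ for some $\delta>0$, and the curve selection lemma would produce an analytic arc $\gamma\colon[0,\eta)\to W$ with $\gamma(0)=x_0$ lying in this set for $t>0$; but $\gamma|_{[0,t]}$ is itself a piecewise-analytic arc on $W$ from $x_0$ to $\gamma(t)$ of length $\int_0^t\|\gamma'\|\to 0$, contradicting $\widetilde d_W(x_0,\gamma(t))\ge\delta$. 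Finally $\widetilde d_W(x,y)=0$ visibly forces $x=y$, so $\widetilde d_W^{-1}(0)=f^{-1}(0)$.

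With these properties in hand, the Łojasiewicz inequality for continuous semialgebraic functions on the compact set $W\times W$ applies verbatim and does all the quantitative work, automatically delivering a \emph{uniform} exponent $\alpha=1/N$; this is precisely why no hypothesis beyond compactness, connectedness and semialgebraicity is needed, and why in general one must allow $\alpha<1$ (cuspidal points of $W$). The main obstacle is therefore not any explicit estimate but the soft structural step that the inner length distance of a semialgebraic set is again semialgebraic; once that is available, continuity is a consequence of curve selection and the arc-length bound is an immediate corollary of Łojasiewicz's inequality.
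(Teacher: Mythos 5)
The paper does not prove this statement at all: it is quoted as an external result, with the reader referred to Łojasiewicz's original notes \cite{Loj65} and to Kurdyka--Orro \cite{KuO97} for a modern treatment. Your proposal is therefore not competing with an in-paper argument, and as a strategy it is sound: it is essentially the Kurdyka--Orro route, namely (i) show that the inner (geodesic) distance of a compact semialgebraic set is a continuous semialgebraic function on $W\times W$ vanishing exactly on the diagonal, and (ii) compare it with the euclidean distance via the Łojasiewicz inequality for continuous semialgebraic functions on a compact set, which yields the uniform Hölder exponent $\alpha=1/N$. You also correctly handle two small points that are easy to get wrong: you define $\widetilde d_W$ directly as an infimum over piecewise-analytic arcs (so that the conclusion really produces an arc of the required type, rather than a rectifiable curve), and you account for the infimum not being attained by taking a near-minimizer. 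The curve-selection argument for continuity and the check $f^{-1}(0)\subset\widetilde d_W^{-1}(0)$ are both in the right direction for the inequality $\widetilde d_W^N\le c\,\|x-y\|$.

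That said, you should be explicit that step (i) — the semialgebraicity of $\widetilde d_W$ — is not a routine Tarski--Seidenberg elimination but is the entire content of \cite{KuO97}: the space of arcs on $W$ is infinite-dimensional, so one cannot directly ``describe the relevant families of arcs by finitely many real parameters''; the known proofs go through $L$-regular (or pancake) decompositions, on each piece of which the inner and outer metrics are uniformly comparable, and then reassemble. As written, your proof is complete only modulo citing that theorem, which is a legitimate but nontrivial import. It is also worth noting that Łojasiewicz's original 1965 argument is genuinely different in flavor: he constructs the connecting arcs directly by triangulation/stratification and induction on dimension, estimating lengths along the way, rather than deducing the bound from semialgebraicity of the inner metric plus the Łojasiewicz inequality. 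Your route is cleaner and yields the semialgebraicity of $\widetilde d_W$ as a bonus, at the price of relying on a harder structural input.
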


We refer the reader to \cite{Loj65} for the original statement, and to
\cite{KuO97} for a more recent treatment.

\smallskip

Let $\f:X \rightarrow \R$ be a {\it continuous} quasi-psh function.
It follows from Proposition \ref{pro:Loja} that 
one can measure its modulus of continuity
equivalently  by using $d_V$ or $d_{\PP^n}$.
In particular being H\"older continuous is a notion that is intrinsically
well defined, although the exponent of H\"olderianity is not.

\subsubsection{Singular K\"ahler-Einstein metrics}

%\subsubsection{Finiteness of ${\rm diam}(V_{\rm reg},\omega_{\rm KE})$}

In this section, 
 $(V,\om_V)$ is a compact Kähler space, and 
 $V_{\rm reg}$  (resp. $V_{\rm sing}$) denotes its regular (resp. singular) locus.

%\subsubsection*{Singular K\"ahler-Einstein metrics}
%Singular 
K\"ahler-Einstein currents
 $\omega_{\rm KE}$ have been constructed on mildly singular compact Kähler spaces 
in \cite{EGZ09,BBEGZ19}. Their two defining features are as follows. 
  
\begin{itemize}
\item They are K\"ahler forms on $V_{\rm reg}$ such that $\Ric(\omega_{\rm KE})$
is proportional to $\omega_{\rm KE}$,
\item They extend to positive currents with bounded local potentials
across  $V_{\rm sing}$.
\end{itemize}
%However, 
The asymptotic behavior of $\om_{\rm KE}$ near the singularities remains to be understood.

\smallskip

Let us now be a bit more precise and write down the general setting in which the aforementioned objects are well-defined. We assume that $V$ has log terminal singularities; in particular $K_V$ is a $\Q$-Cartier divisor. If $h$ is a smooth hermitian metric on $K_V$ and $\sigma$ is a local generator of $mK_V$, then 
\[\mu_h:=i^{n^2} \frac{(\sigma\wedge \bar \sigma)^{\frac 1m}}{|\sigma|^{2/m}_{h^{\otimes m}}}\]
defines a positive measure on $V_{\rm reg}$ with finite mass (by the log terminal condition), independent of the choice of $m$ or $\sigma$. We extend it to $V$ trivially. Finally, let $\omega_V$ be a Kähler metric on $V$, and consider the Monge-Ampère equation
\begin{equation} 
\label{MAV}
(\omega_V+dd^c \varphi)^n=e^{\lambda \varphi} \mu_h,
\end{equation}
for $\lambda \in \mathbb R$ and $\varphi \in \PSH(V, \omega_V)\cap L^{\infty}(V)$. We scale $h$ so that $\int_V d\mu_h = \int_V \omega_V^n$.

If $c_1(K_V)$ has a sign (i.e. it contains a multiple of a Kähler metric), then one can choose $\lambda,h,\omega_V$ such that $i\Theta_h(K_V)=\lambda \omega_V$. Then, if $\varphi$ is a solution of \eqref{MAV}, the current $\omega_{\varphi}:=\omega_V+dd^c \varphi$ satisfies $\Ric \omega_{\varphi}= -\lambda \omega_{\varphi}$ in the weak sense. 

\begin{defi}
Let $V$ be a compact Kähler space with log terminal singularities such that $c_1(K_V)$ has a sign. A Kähler-Einstein metric is a positive current of the form $\omega_{\rm KE}=\omega_V+dd^c \varphi_{\rm KE}$ where $\varphi_{\rm KE} \in \PSH(V, \omega_V)\cap L^{\infty}(V)$ solves \eqref{MAV} and where $(\lambda, h, \omega_V)$ satisfies $i\Theta_h(K_V)=\lambda \omega_V$.
\end{defi}

If $\lambda \ge 0$, the equation \eqref{MAV} admits a unique (normalized) solution \cite{EGZ09}, but as in the smooth case the situation in the case $\lambda <0$ is more complicated \cite{BBEGZ19}. In particular, if $K_V$ is ample of numerically trivial, then $V$ admits a Kähler-Einstein metric. Moreover, it is showed in {\it loc. cit.} that any Kähler-Einstein metric $\om_{\rm KE}$ induces a smooth Kähler metric on $V_{\rm reg}$.  \\

%For  general log-terminal singularities, one can work in 
%a log-resolution $\pi:X \rightarrow V$.
%
The resolution of \eqref{MAV} 
%when $\lambda \ge 0$ 
goes as follows. Let $\pi: X\to V$ be a resolution of singularities of $V$ and let $dV_X$ be a smooth volume form on $X$. The pull back measure $\pi^*\mu_h$ is well-defined and satisfies $\pi^*\mu_h=f dV_X$ where $f\in L^p(dV_X)$ for some $p>1$. Solving \eqref{MAV} is then equivalent to solving the degenerate complex Monge-Amp\`ere equation
\begin{equation} 
\label{MAf}
%\tag{$\mathrm{MA}_{\omega,f}$}
(\omega+dd^c \psi)^n=e^{\lambda \psi} f dV_{X},
\end{equation}
where $\omega=\pi^* \omega_V$ is semipositive but degenerate along the exceptional locus of $\pi$. 

\smallskip

An immediate corollary of Lemma~\ref{exp} is the following. 

\begin{lem}
Let $V$ be a compact Kähler space of dimension $n$ with log terminal singularities admitting a Kähler-Einstein metric $\om_{\rm KE}$. Let $d_{\rm KE}$ be the geodesic distance induced by the Kähler metric ${\om_{\rm KE}}|_{V_{\rm reg}}$. There exists $\alpha >0$ such that for any $x\in V_{\rm reg}$, we have
\[\int_{V_{\rm reg}} e^{\alpha d_{\rm KE}(\cdot,x)} \om_{\rm KE}^n<+\infty.\]  
\end{lem}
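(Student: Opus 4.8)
The plan is to reduce the statement to Lemma~\ref{exp} by pulling everything back to a resolution of singularities of $V$. First I would fix, by Hironaka's theorem, a resolution $\pi\colon X\to V$ with $X$ a compact K\"ahler manifold, chosen so that $\pi$ is an isomorphism over $V_{\rm reg}$ and $E:=\pi^{-1}(V_{\rm sing})$ is a simple normal crossings divisor; set $X^\circ:=X\setminus E$, so that $\pi|_{X^\circ}\colon X^\circ\to V_{\rm reg}$ is a biholomorphism. Writing $\omega_{\rm KE}=\omega_V+dd^c\varphi_{\rm KE}$ with $\varphi_{\rm KE}\in\PSH(V,\omega_V)\cap L^\infty(V)$, I would set $\theta:=\pi^*\omega_V$ (a smooth closed $(1,1)$-form on $X$), $\varphi:=\pi^*\varphi_{\rm KE}$ (bounded), and $T:=\theta+dd^c\varphi=\pi^*\omega_{\rm KE}$, which is a closed positive $(1,1)$-current on $X$. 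Since $\omega_{\rm KE}$ restricts to a smooth K\"ahler metric on $V_{\rm reg}$ (see \cite{BBEGZ19}), the form $T|_{X^\circ}=\pi^*(\omega_{\rm KE}|_{V_{\rm reg}})$ is K\"ahler on $X^\circ$, so we are precisely in the situation of Setup~\ref{setup}. Moreover $\pi|_{X^\circ}$ is an isometry onto $(V_{\rm reg},\omega_{\rm KE}|_{V_{\rm reg}})$, so that $\rho_{\tilde x}(\tilde y)=d_{\rm KE}(\pi(\tilde x),\pi(\tilde y))$ for all $\tilde x,\tilde y\in X^\circ$.

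Applying Lemma~\ref{exp} then yields an exponent $\alpha_0>0$ such that $e^{\alpha_0\rho_{\tilde x}}\in L^1(X,\omega_X^n)$ for every $\tilde x\in X^\circ$, where $\omega_X$ is a fixed K\"ahler metric on $X$. Next I would compare the two volume measures. Since $\pi^*\omega_{\rm KE}=\omega+dd^c\psi$ with $\omega=\pi^*\omega_V$ and $\psi:=\pi^*\varphi_{\rm KE}$, equation \eqref{MAf} gives $(\pi^*\omega_{\rm KE})^n=e^{\lambda\psi}f\,dV_X$ as Borel measures on $X$, where $\psi$ is bounded and $f\in L^p(dV_X)$ for some $p>1$; being absolutely continuous, this measure puts no mass on the Lebesgue-negligible set $E$, and under the biholomorphism $\pi|_{X^\circ}$ it corresponds to $\omega_{\rm KE}^n$ on $V_{\rm reg}$. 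Hence, for any $x\in V_{\rm reg}$ and $\tilde x:=(\pi|_{X^\circ})^{-1}(x)$,
\[
\int_{V_{\rm reg}} e^{\alpha\,d_{\rm KE}(\cdot,x)}\,\omega_{\rm KE}^n
=\int_{X^\circ} e^{\alpha\rho_{\tilde x}}\,(\pi^*\omega_{\rm KE})^n
=\int_X e^{\alpha\rho_{\tilde x}}\,e^{\lambda\psi}f\,dV_X .
\]

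To finish I would bound the last integral by H\"older's inequality: as $\psi$ is bounded one has $e^{\lambda\psi}\le C$, and with $q$ the conjugate exponent of $p$,
\[
\int_X e^{\alpha\rho_{\tilde x}}\,e^{\lambda\psi}f\,dV_X
\;\le\; C\,\bigl\|e^{q\alpha\rho_{\tilde x}}\bigr\|_{L^1(dV_X)}^{1/q}\,\|f\|_{L^p(dV_X)} .
\]
Taking $\alpha:=\alpha_0/q$ makes the right-hand side finite by Lemma~\ref{exp} (note that $dV_X$ and $\omega_X^n$ are comparable smooth volume forms), and this $\alpha$ is uniform in $x$ since $\alpha_0$ is. The only steps that need genuine care are the existence of a resolution that is an isomorphism over $V_{\rm reg}$ with simple normal crossings exceptional divisor, and the identification $(\pi^*\omega_{\rm KE})^n=e^{\lambda\psi}f\,dV_X$ on $X$; the analytic content of the statement is already carried by Lemma~\ref{exp}, so beyond this bookkeeping I do not expect a real obstacle.
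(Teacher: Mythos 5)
Your proposal is correct and follows essentially the same route as the paper: pull back to a resolution, apply Lemma~\ref{exp} to get $e^{\beta\rho_{\tilde x}}\in L^1(X,\omega_X^n)$, write $\pi^*\omega_{\rm KE}^n=f\,dV_X$ with $f\in L^p$ for some $p>1$, and conclude by H\"older with $\alpha=\beta/q$. The extra bookkeeping you include (isometry over $V_{\rm reg}$, absorbing the bounded factor $e^{\lambda\psi}$ into the density) is all consistent with the paper's argument.
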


\begin{proof}
Let $\pi: X\to V$ be a resolution of singularities of $V$, let $E$ be the exceptional divisor of $\pi$ and let $\omega_X$ be a Kähler metric on $X$. The current $T:=\pi^*{\om_{\rm KE}}$ induces a smooth Kähler metric on $X\setminus E$. We can see the function  $\rho_x:= d_{\rm KE}(\cdot,x)$ as a Lipschitz function on $X\setminus E$ which extends to $L^1$ function on $X$ by Lemma~\ref{dist L2}. Moreover, Lemma~\ref{exp} implies that  $e^{\beta \rho_x}\in L^1(X,\om_X^n)$ for some $\beta>0$.

Next, as we explained above, one can write $\pi^*{\om_{\rm KE}}^n=f\om_X^n$ where $f\in L^p(X)$ for some $p>1$. If $q$ is the conjugate exponent of $p$, and $\alpha:=\beta/q$, we have by Hölder inequality
\[\int_X e^{\alpha \rho_x} \pi^*\om_{\rm KE}^n \le (\int_X e^{\beta \rho_x} \om_{X}^n)^{1/q}(\int_X f^p \om_X^n)^{1/p}\]
and the lemma follows.  
\end{proof}

\subsubsection{Finiteness of ${\rm diam}(V_{\rm reg},\omega_{\rm KE})$}

%\subsubsection*{Regularity of the K\"ahler-Einstein potential}

It is known that the local potentials $\f_{\rm KE}$ are often continuous near $V_{\rm sing}$
(see \cite{GGZ23}).
However examples from Section \ref{sec:Riccibound} show that an extra information on the modulus of continuity
is required, in order to get control on the diameter of $(V_{\rm reg},\omega_{\rm KE})$.

When $\omega$ is a K\"ahler form on $X$, it follows from 
\cite{Kol08,DDGHKZ14} that the unique (normalized)
solution to \eqref{MAf} is H\"older continuous.
The K\"ahler-Einstein potentials $\f_{\rm KE}$ are also H\"older continuous if
the singularities  $V_{\rm sing}$ are quotient. If $V_{\rm sing}$ consists of isolated
and globally smoothable ordinary double points, it has been shown  
by Hein-Sun \cite{HS} that $\omega_{\rm KE}$ is asymptotic to 
the Stenzel metric $dd^c |z|^{2\frac{n-1}{n}}$, whose potential is also H\"older continuous
(see \cite{CS22} for some recent generalization).

\smallskip

For all these reasons we conjecture the following result.

\begin{conj} \label{conj:KEcont}
The K\"ahler-Einstein potentials 
$\f_{\rm KE}$ are H\"older continuous on $V$.
\end{conj}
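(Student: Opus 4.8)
A possible approach to Conjecture~\ref{conj:KEcont} is the following. The plan is to reduce the statement to a regularity question for a degenerate complex Monge--Amp\`ere equation on a resolution, and then to run the Ko\l odziej-type scheme underlying Theorem~\ref{thm:modulus1}. Fix a resolution $\pi\colon X\to V$ with exceptional divisor $E$, and recall from \eqref{MAf} that $\psi=\pi^*\f_{\rm KE}$ is the bounded $\omega$-psh solution of $(\omega+dd^c\psi)^n=e^{\lambda\psi}f\,dV_X$, where $\omega=\pi^*\omega_V$ is big and semipositive and $f\in L^p(dV_X)$ for some $p>1$; it is continuous by \cite{GGZ23}. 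Since $\psi$ is bounded, $g:=e^{\lambda\psi}f$ again lies in $L^p$, so $\psi$ solves $(\omega+dd^c\psi)^n=g\,dV_X$. By \L{}ojasiewicz-type estimates for the bimeromorphic morphism $\pi$ (in the spirit of Proposition~\ref{pro:Loja}), H\"older continuity of $\f_{\rm KE}$ on $V$ is equivalent to H\"older continuity of $\psi$ on $X$, up to a loss of exponent; so it suffices to prove the latter.

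I would then follow the three steps in the proof of Theorem~\ref{thm:modulus1}: from $g\in L^p$ and the H\"older--Young inequality, derive a volume--capacity estimate $\mu(K)\le C\,\ca_{\omega_X}(K)^{1+\varepsilon}$ for the measure $\mu:=g\,dV_X$, a fixed K\"ahler form $\omega_X\ge\omega$ on $X$, and some $\varepsilon=\varepsilon(n,p)>0$; combine the comparison principle with an $L^\infty$ a priori estimate in the spirit of \cite{EGZ09,BBEGZ19} to control $\|\psi-\rho_\tau\psi\|_{L^\infty}$ by a power of the capacity of a sublevel set; and feed this into the Demailly--Kiselman regularization (Lemma~\ref{lem:regDem}) to obtain a modulus of continuity. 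On $X\setminus E$, where $\pi^*\omega_V$ is a genuine K\"ahler form, this is exactly Ko\l odziej's theorem, and one recovers the H\"older estimate of \cite{Kol08} and \cite[Theorem~A]{DDGHKZ14} with a constant that stays uniform at a fixed distance from $E$.

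The hard part is to make the stability and regularization steps work \emph{uniformly up to $E$}, where $[\omega]$ lies on the boundary of the K\"ahler cone and $\omega$ degenerates; two difficulties compound there. First, the Demailly--Kiselman smoothing of $\psi$ must be carried out relative to $\omega_X$ (legitimate, as $\PSH(X,\omega)\subset\PSH(X,\omega_X)$), while the comparison-principle/stability step lives in the big class $[\omega]$, and the two cannot be reconciled by changing the potential because $[\omega]\ne[\omega_X]$ in cohomology. Second, $\ca_{\omega}$ and $\ca_{\omega_X}$ are no longer comparable on any fixed neighborhood of $E$. To bridge this I would exploit that $\omega$ is nef and big: Demailly regularization provides forms $\omega+dd^c\rho_j\ge\delta_j\,\omega_X$ with $\rho_j$ having analytic singularities along $E$ and bounded away from it, which should allow one either to transfer regularity through an auxiliary Monge--Amp\`ere equation solved in $[\omega]$, or to prove a ``mixed'' stability estimate directly. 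Even so, a clean general proof will likely need the local structure of log terminal singularities: over quotient singularities one passes to a local quasi-\'etale cover, on which the equation becomes a non-degenerate K\"ahler Monge--Amp\`ere equation and \cite{Kol08,DDGHKZ14} apply before descending; over isolated smoothable singularities one combines the asymptotics of \cite{HS,CS22} with the interior estimate. Treating a general, possibly positive-dimensional, log terminal singular locus --- for instance via the index-one cover of $K_V$ and a bootstrap near $V_{\rm sing}$ --- is where the genuine work lies, since a uniform H\"older bound there does not follow from any currently available $L^\infty$-stability technique.
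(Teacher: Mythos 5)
The statement you are trying to prove is stated in the paper as a \emph{conjecture} (Conjecture~\ref{conj:KEcont}), and the paper offers no proof of it: it only records the known special cases (Hölder continuity when the reference class is Kähler \cite{Kol08,DDGHKZ14}, quotient singularities, isolated smoothable ordinary double points via \cite{HS,CS22}) as evidence, and then uses the conjecture as a \emph{hypothesis} in Theorem~\ref{thm:diamKE}. Your proposal is therefore not comparable to a proof in the paper; it should be judged on its own, and on its own terms it is not a proof either --- as you yourself concede in the final paragraph.

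The genuine gap is exactly the one you name: after pulling back to a resolution $\pi\colon X\to V$, the equation becomes $(\omega+dd^c\psi)^n=g\,dV_X$ with $\omega=\pi^*\omega_V$ merely big and semipositive, degenerate along the exceptional divisor $E$. Every available Hölder (or even modulus-of-continuity) estimate of Ko\l odziej type --- including Theorem~\ref{thm:modulus1} of this paper --- requires the reference class to be Kähler, precisely because the stability step compares $\psi$ with its Demailly--Kiselman regularization \emph{inside the same class}, and the loss of positivity \eqref{hessian} must be absorbed by a Kähler form in that class. When $[\omega]$ sits on the boundary of the Kähler cone this absorption fails near $E$, and no mixed stability estimate bridging $\ca_{\omega}$ and $\ca_{\omega_X}$ there is currently known; your suggested workarounds (quasi-\'etale covers, Hein--Sun asymptotics) reproduce exactly the special cases already cited and do not reach a general log terminal singular locus. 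Two smaller caveats: the claimed equivalence between Hölder continuity of $\f_{\rm KE}$ on $V$ and of $\psi$ on $X$ is only immediate in one direction ($d_{\omega_X}\ge\pi^*d_{\omega_V}$ gives the descent from $V$ to $X$; the converse needs a \L{}ojasiewicz inequality for $\pi$ that you would have to prove); and note that for the application to diameter finiteness the paper's remark after Theorem~\ref{thm:diamKE} shows one needs much less than Hölder continuity, namely a modulus of continuity $O\bigl((\log\log(1/r))^{-1-\delta}\bigr)$ for $\pi^*\f_{\rm KE}$ --- so if your goal is Corollary~D, attacking this weaker statement would be the more promising route.
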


Here, Hölder continuous is meant with respect to the euclidean distance 
%$d_{\mathbb C^N}$
 induced by local embeddings $V\hookrightarrow \mathbb C^N$. A straightforward application of Theorem~\ref{thm:diameter general} (or simply its weaker version mentioned a few lines above it) allows one to
establish the finiteness of ${\rm diam}(V_{\rm reg},\omega_{\rm KE})$ assuming Conjecture \ref{conj:KEcont}.

\begin{thm}
 \label{thm:diamKE}
If Conjecture \ref{conj:KEcont} is correct, then 
 ${\rm diam}(V_{\rm reg},\omega_{\rm KE})<+\infty$.
\end{thm}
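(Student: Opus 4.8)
The plan is to pull the Kähler-Einstein data back under a resolution of singularities and then invoke Theorem~\ref{thm:diameter general}. First I would fix a log resolution $\pi\colon X\to V$ which is an isomorphism over $V_{\rm reg}$ and such that $E:=\pi^{-1}(V_{\rm sing})$ is a simple normal crossing divisor; then $X^\circ:=X\setminus E$ and $\pi$ restricts to a biholomorphism $X^\circ\simeq V_{\rm reg}$. Writing $\omega_{\rm KE}=\omega_V+dd^c\varphi_{\rm KE}$ with $\varphi_{\rm KE}\in\PSH(V,\omega_V)\cap L^\infty(V)$, the form $\theta:=\pi^\ast\omega_V$ is a smooth closed semipositive $(1,1)$-form on $X$ and $T:=\pi^\ast\omega_{\rm KE}=\theta+dd^c(\varphi_{\rm KE}\circ\pi)$ is a closed positive current. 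Since $\omega_{\rm KE}$ induces a genuine Kähler metric on $V_{\rm reg}$ (Section~\ref{sec:KE}), the restriction $\omega:=T|_{X^\circ}=\pi^\ast(\omega_{\rm KE}|_{V_{\rm reg}})$ is a Kähler form, so the data $(X,\omega_X,E,T)$ fits Setup~\ref{setup} with potential $\varphi:=\varphi_{\rm KE}\circ\pi$.

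Next I would check that $\varphi$ is Hölder continuous on $X$ for the distance $d_{\omega_X}$. By Conjecture~\ref{conj:KEcont}, $\varphi_{\rm KE}$ is Hölder continuous on $V$ for the euclidean distance coming from a local embedding $V\hookrightarrow\C^N$; combined with Proposition~\ref{pro:Loja} (and its local version valid for compact Kähler spaces), $\varphi_{\rm KE}$ is then Hölder continuous for the intrinsic distance $d_V$, with some exponent $\alpha'\in(0,1]$. As $\pi$ is a morphism it is locally Lipschitz, so $d_V(\pi(x),\pi(x'))\le L\,d_{\omega_X}(x,x')$ on a fixed coordinate patch, and hence $|\varphi(x)-\varphi(x')|=|\varphi_{\rm KE}(\pi(x))-\varphi_{\rm KE}(\pi(x'))|\le C\,d_{\omega_X}(x,x')^{\alpha'}$ globally on $X$ by a finite covering argument. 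In particular the modulus of continuity of $\varphi$ satisfies $m_\varphi(r)\le Cr^{\alpha'}$, so trivially $m_\varphi(r)\le C'(\log(-\log r))^{-1-\delta}$ for every $\delta>0$ and $r$ small, and also $\int_0 t^{-1}\sqrt{m_\varphi(t)}\,dt<+\infty$.

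Applying Theorem~\ref{thm:diameter general} (or already the weaker statement preceding it, since $\varphi$ is Hölder) in Setup~\ref{setup} then yields ${\rm diam}(X^\circ,\omega)<+\infty$. Finally $\pi|_{X^\circ}\colon (X^\circ,\omega)\to(V_{\rm reg},\omega_{\rm KE}|_{V_{\rm reg}})$ is a biholomorphic isometry, hence induces a length-preserving bijection between piecewise $C^1$ arcs; therefore $d_{\omega_{\rm KE}}(\pi(x),\pi(y))=d_\omega(x,y)$ for all $x,y\in X^\circ$, and ${\rm diam}(V_{\rm reg},\omega_{\rm KE})={\rm diam}(X^\circ,\omega)<+\infty$, as claimed.

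Since the analytic core has already been isolated in Theorem~\ref{thm:diameter general}, the only genuine point is the transfer of regularity: converting the extrinsic Hölder bound of Conjecture~\ref{conj:KEcont} into a Hölder bound for $\varphi_{\rm KE}\circ\pi$ with respect to $d_{\omega_X}$, which is exactly where Proposition~\ref{pro:Loja} (Łojasiewicz) and the Lipschitz property of $\pi$ are used, together with the elementary but essential remark that $\pi$ restricts to an isometry $X^\circ\simeq V_{\rm reg}$, so that no shortcut through $V_{\rm sing}$ can decrease the diameter.
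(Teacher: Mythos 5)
Your argument is correct and follows essentially the same route as the paper: pull back $\omega_{\rm KE}$ under a log resolution, transfer the H\"older continuity of $\varphi_{\rm KE}$ to $\pi^*\varphi_{\rm KE}$ with respect to $d_{\omega_X}$ via the Lipschitz property of $\pi$ (the paper arranges $\omega_X\ge\pi^*\omega_V$ for the same purpose), and apply Theorem~\ref{thm:diameter general}. The only cosmetic remark is that passing from euclidean-H\"older to $d_V$-H\"older uses the trivial inequality $d_{\rm eucl}\le d_V$ rather than the \L ojasiewicz bound of Proposition~\ref{pro:Loja}, which is only needed to make ``H\"older'' an intrinsically well-defined notion.
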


More generally, we conjecture that any solution of \eqref{MAV} is Hölder continuous. Note that if $\varphi$ solves \eqref{MAV}, then $\omega_\varphi:=\omega_V+dd^c\varphi$ induces a Kähler metric on $V_{\rm reg}$. Similarly to Theorem~\ref{thm:diamKE}, the latter conjecture would imply that the diameter of $(V_{\rm reg}, \omega_{\varphi})$ is finite. 

\begin{proof}
Let $\pi: X\to V$ be a log resolution of $V$ and let $\omega_X$ be a Kähler form on $X$ such that $\omega_X\ge \pi^*\om_V$. Let $T:=\pi^*\omega_{\rm KE}=\pi^*\om_V+dd^c(\pi^*\varphi)$. The closed positive current $T$ is a Kähler form outside of the exceptional locus $E$ of $\pi$.  On $X^2$, we have $d_{\om_X} \ge \pi^*d_{\om_V}$.  Moreover, under Conjecture~\ref{conj:KEcont}, the function $\varphi$ is Hölder continuous with respect to $d_{\om_V}$, hence $\pi^*\varphi$ is Hölder continuous with respect to $d_{\om_X}$. The result now follows from Theorem~\ref{thm:diameter general} applied to the current $T$.  
\end{proof}

\begin{rem}

Conjecture~\ref{conj:KEcont} is very strong and we do not need its full force to derive finiteness of the diameter of $(V_{\rm reg}, \om_{\rm KE})$. Indeed, the proof above shows that it would be enough to prove that for a log resolution $\pi: X\to V$ of $V$, the function $\pi^*\varphi$ has a modulus of continuity $m_\varphi$ satisfying $m_{\varphi}(r) \lesssim (\log \log (-r))^{-1-\delta}$ for some $\delta>0$. 
\end{rem}

\section{Examples} \label{sec:Riccibound}

\subsection{Radial Examples} \label{sec:radialexamples}

   We assume here that the quasi-psh functions $\f$ under consideration are smooth 
   in $X \setminus \{ p \}$. We choose a local chart biholomorphic to the unit ball $B$ of $\C^n$,
   with $p$ corresponding to the origin. We further assume that $\f$
   has a {\it radial singularity} at $p$, i.e. it is invariant under the group  
  $U(n,\mathbb C)$ near $p$, and so is  $\omega$.
   The  
   singularity type  of   $\f$ 
    only depends on its local behavior near $p$.
   
 We thus consider the local situation of 
  a psh function $v=\chi \circ L$, 
%  \marginpar{Notation $\chi$ d\'ej\`a utilis\'ee pour (K)}
  where $\chi:\R^- \rightarrow\R^-$ is a 
 smooth  strictly convex increasing function and $L:z \in  B \rightarrow \log |z|^2 \in \R^-$.
  A standard computation shows that
\[ \omega:=dd^c v=(\chi' \circ L) \, dd^c L +(\chi'' \circ L) \, dL \wedge d^c L\]
  and
$$
  (dd^c v)^n =c_n \frac{(\chi' \circ L)^{n-1}\chi'' \circ L}{|z|^{2n}} dV_{\rm eucl}
=c_n \big[\chi'^{n-1}\chi'' e^{-n \cdot})\circ L\big] \, dV_{\rm eucl}
 $$

%  \subsubsection{Diameter, modulus of continuity, Condition (K)}
 
 \subsubsection*{Diameter}
 One can rewrite the Kähler form $\omega$ using the usual coordinates on $\mathbb C^n$ as
\begin{eqnarray*}
\om &=& \chi'\cdot  \frac {\sqrt{-1}}{|z|^2} \sum_{i,j} \Big( \delta_{ij}- \frac{\bar z_i z_j}{|z|^2}\Big) \, dz_i\wedge d\bar z_j+\chi''\cdot  \frac {\sqrt{-1}}{|z|^2} \sum_{i,j}  \frac{\bar z_i z_j}{|z|^2} \, dz_i\wedge d\bar z_j\\
&=&\omega_s+\omega_r
\end{eqnarray*}
where the composition with $L$ has been omitted. The subscripts $s$ (resp. $r$) stand for spherical (resp. radial). Indeed, the kernel of $\omega_s$ in $T_{\mathbb C^n}^{1,0}$ is generated by the holomorphic radial vector field $\xi:=\sum_{i=1}^n z_i \frac{\partial}{\partial z_i}$ while $\omega_r$ has rank one and $\|\xi\|^2_{\omega_r}=1$. \\

We claim that the rays through the origin of $\mathbb C^n$ are geodesics for $\omega$. Indeed, if $x\in \mathbb C^n$ say of norm one and $\gamma_r(t)=tx$ for $t\in [\ep,1]$, then we have $\gamma'_r(t)= t^{-1}(\xi+\bar \xi)_{\gamma_r(t)}$ hence $\|\gamma_r'(t)\|^2_{\omega}=t^{-2}\chi''(L(\gamma_r(t)))$ and the length of $\gamma_r$ with respect to $\omega$ is 
\[\ell(\gamma_r)=\int_{\log \ep}^{0} \sqrt{\chi''(s)} ds.\]
 Now, let $\gamma$ be any path connecting $\ep x$ and $x$. Since $dL\otimes d^cL= (d \log r)^2$, we have
 \begin{eqnarray*}
\ell(\gamma) &=& \int_{\ep}^1 \|\gamma'(t)\|_{\omega} \,dt 
\ge  \int_{\ep}^1 \|\gamma'(t)\|_{\omega_r} \,dt\\
&= & \int_{\ep}^1\sqrt{\chi''(L(\gamma(t))}\frac{|dr(\gamma'(t))|}{r(\gamma(t))}\, dt
\ge  \int_{\log \ep}^{0} \sqrt{\chi''(s)} ds
\end{eqnarray*}
where in the last line we have used $|dr|\ge dr$ and performed the change of variable $s:=r(\gamma(t))$. In particular, we find that $\ell(\gamma)\ge \ell(\gamma_r)$ which proves that $\gamma_r$ is a geodesic. Tracing back the inequalities, we actually see that any geodesic connecting $\ep x$ and $x$ must have $\|\gamma'(t)\|_{\omega_s}=0$, hence it must coincide with $\gamma_r$.  \\

Next, if $x,y\in B^*$, we can connect $x$ to $y$ using a radial ray, a spherical geodesic on $\mathbb S^{2n-1}$ and another radial ray.  Since $\omega$ is smooth away from the origin, this shows that  the Riemannian metric associated to $\omega$ on $B^*$ has finite  diameter if and only if
\[{\rm diam}(B^*,d_\omega)<+\infty
\; \; \Longleftrightarrow \; \; \int_{-\infty} \sqrt{\chi''(s)} ds <+\infty.\]

\subsubsection*{Modulus of continuity}
 
We restrict ourselves to continuous potentials, i.e. we assume that $\chi(-\infty)>-\infty$.
We can normalize $v$ without loss of generality so that $v(0)=\chi(-\infty)=0$. 
Since $v$ is smooth outside of the origin,
its modulus of continuity is  
$$
m_v(r):=\sup_{t <\log r} \chi(t).
$$

 \subsubsection*{Integrability condition (K)}
 
 The Monge-Amp\`ere measure  $(dd^c v)^n=f \, dV$ satisfies
 \CK if there exists 
 an increasing function $h$ such that $\int^{+\infty} h(t)^{-1} dt <+\infty$
 and
 $$
  \int_{B^*} f ( \log f)^n ( h \circ \log \circ \log f)^{n} dV <+\infty.
 $$
 In order to simplify the next computation, we will make the assumption that $\chi(t)$ does not go to zero too fast at infinity. More precisely, we impose that $\chi'(t),\chi''(t) \ge e^{Ct}$ near $t=-\infty$ for some $C>0$. This guarantees that $\log f \sim \log |z|$, so that $v$ satisfies \CK iff
   $$
  \int_{-\infty} \chi''(t) (\chi'(t))^{n-1}  |t|^n (h \circ \log |t|)^{n} dt <+\infty.
  $$

   \subsubsection*{Ricci lower bound}
   
   As the computations are   involved,
   we analyze whether the Ricci curvature of the above radial metrics is bounded below
   in a separate section \ref{sec:ricci}. We will actually be interested in the refined question of whether the  approximants $\ome=:dd^c \chi \circ \log (\zze)$ have a lower bound on their Ricci curvature independent on $\ep$.  
%  Observe that 
% $\omega=dd^c v$ is a K\"ahler current since 
% $\chi$ is strictly convex. 

  \subsection{Explicit examples} \label{exa:radial}
In what follows, we give four families of examples and check whether the various conditions mentioned above are satisfied.  

\medskip  
  
 \fbox{ {\bf 1.}} Consider $\chi_{\a}(t)=\exp(\a t)$ for some $\a>0$.
  
  \medskip
  
  \noindent
  We have 
  $$
  {\rm Ric}(\omega_{\a})=-dd^c \log f=n(1-\a)dd^c \log |z|^2.
  $$
  In particular, $\Ric \, \om_{\a} \ge 0$ if and only if $\a \le 1$. Moreover, since $\chi'$ tends to zero at $-\infty$, one can easily see that the Ricci curvature of $\omega_{\a}$ is unbounded below when $\alpha>1$. 
  Thus
  \begin{itemize}
  \item[$\bullet$] \CK is satisfied;
   $\int_X f_{\a} |\log f_{\a}| \log ( \log (f_{\a}+3))^{p} dV <+\infty$ for all $p$;
  \item[$\bullet$] $m_{v_{\a}}(r) \sim r^{\a}$ and $\int_0 t^{-1}\sqrt{m_{v_{\a}}(t)} dt <+\infty$;
  \item[$\bullet$] ${\rm diam}(B^*,d_{\omega_{\a}})<+\infty$ for all $\a>0$;
  \item[$\bullet$] $\Ric \, \om_{\a} \ge -A \omega_{\a}$ if and only if $0<\a \le 1$.
  \end{itemize}
  
  \medskip
  
 \fbox{ {\bf 2.}} Consider $\chi_{\a}(t)=(-t)^{-\a}$ for some $\a>0$.
  
    \medskip
  
  \noindent
   We have
  $m_{v_{\a}}(r)=(-\log r)^{-\a}$, hence
  $
  \int_0 \frac{\sqrt{m_v(r)}}{r}dr= \int_0 \frac{dr}{r(-\log r)^{\a/2}}
  $
  is finite if and only if $\a>2$.
    One has 
$$\chi^{(k)}(t)=\prod_{j=0}^{k-1}(\alpha+j) \cdot (-t)^{-\alpha-k}$$
so that with the notation 
%borrowed 
from Section \ref{sec:ricci}, 
%one has 
$\lambda = n-\frac{n\alpha+n+1}{(-\log \|z\|^2)} \ge 0$ 
for $|z|\ll1$ 
%small enough 
but 
$$\frac{\mu}{\chi''} = -\frac{n(\alpha+1)+1}{\alpha(\alpha+1)} \cdot (-\log  \zz)^{\alpha} $$
which goes to $-\infty$ when $z\to 0$, hence
% the Ricci curvature of 
$\Ric \omega_{\a}$ is unbounded from below.
 Thus
  \begin{itemize}
  \item[$\bullet$] \CK is satisfied;
  $\int_X f_{\a} |\log f_{\a}| \log ( \log (f_{\a}+3))^{p} dV <+\infty$ for all $p$;
 \item[$\bullet$] $\int_0 t^{-1} \sqrt{m_{v_{\a}}(t)} dt <+\infty$ if and only if $\a>2$;
  \item[$\bullet$] ${\rm diam}(B^*,d_{\omega_{\a}})<+\infty$ for all $\a>0$;
  \item[$\bullet$] $\Ric \, \om_{\a}$ is unbounded from below.
  \end{itemize}
  
  \medskip
  
 \fbox{ {\bf 3.}}  Consider $\chi_{\a}(t)=(\log (-t))^{-\a}$, where $\a>0$.
 
   \medskip
  
  \noindent
  One computes
  $
  \chi_{\a}'(t)=\frac{\a}{(-t) (\log (-t))^{1+\a}}$
  %\; \; \text{ and } \; \;
and  $ \chi_{\a}''(t) \sim \frac{\a}{t^2 (\log (-t))^{1+\a}},  $
  so 
  $$
  {\rm diam}(B^*,d_{\a})<+\infty
\Longleftrightarrow 
\int_{-\infty} \frac{dt}{|t| (\log (-t))^{\frac{1+\a}{2}}} dt<+\infty
\Longleftrightarrow
\a>1.
  $$
  
 We obtain  $m_{v_{\a}}(r)=(\log (-\log r))^{-\a}$ 
 hence $\int_0 \frac{\sqrt{m_{v_{\a}}(t)}}{t}dt =+\infty$. 
 %  If $h$ is an increasing function, then  
 Observe that
 $$
  \int_{-\infty} \chi_{\a}''(t) (\chi_{\a}'(t))^{n-1}  |t|^nh(\log |t|)^n dt 
\sim  \int_{-\infty} \frac{h(\log |t|)^n dt}{|t| (\log |t| )^{n(1+\a)}}=\int^{+\infty} \frac{h(s)^nds}{s^{n+n\alpha}}.
  $$
If $n\a>1$, then this integral converges for $h(s)=s^{1+\ep}$ as long as $0< \e\ll 1$. Moreover, Hölder inequality with $p=\frac{n+1}n$ and  $q=n+1$ yields:

\[+\infty=\int^{\infty} \frac {ds} s= \int^{+\infty} h(s)^{-\frac{n}{n+1}} \frac{h(s)^\frac{n}{n+1} }{s}ds  \le \left(\int^{+\infty}\frac {ds}{h(s)}\right)^{\frac{n}{n+1}}\left(\int^{+\infty}\frac{h(s)^nds}{s^{n+1}}\right)^{\frac 1{n+1}}\]
hence $\int^{+\infty} \frac{h(s)^nds}{s^{n+n\alpha}}$ is divergent as soon as $n\alpha \le 1$:
%Therefore, 
\CK is satisfied iff $\a>1/n$.

A computation shows  that for every $k\ge 1$, one has 
$$\chi_{\a}^{(k)}(t)=\frac{(k-1)! \cdot \alpha}{(-t)^k(\log(-t))^{\alpha+1}}
\Big( 1+O\Big(\frac 1{\log(-t)}\Big)\Big)$$
when $t\to -\infty$. 
%The metric $\om_{\a} = dd^c \chi_{\a} \circ \log \zz$ is such that 
The eigenvalue $\mu$ of $\Ric \om_{\a}$ satisfies 
$$\frac{\mu}{\chi_{\a}''} \underset{\|z\| \to 0}{\sim} -\frac{n+1}{\alpha} \cdot (\log(-\log  \zz))^{\alpha+1} $$
which is unbounded below. In particular $\Ric \omega_{\a}$ is unbounded from below. 
Thus
  \begin{itemize}
  \item[$\bullet$] \CK is satisfied if and only if $\a>1/n$; 
  \item[$\bullet$] $\int_X f_{\a} |\log f_{\a}| \log ( \log (f_{\a}+3))^{p} dV <+\infty$ 
  iff $p < n(1+\alpha)-1$;
   \item[$\bullet$] $\int_0 t^{-1} \sqrt{m_{v_{\a}}(t)} dt =+\infty$ for all $\a>0$;
  \item[$\bullet$] ${\rm diam}(B^*,d_{\omega_{\a}})<+\infty$ if and only if $\a>1$;
  \item[$\bullet$] $\Ric \, \om_{\a}$ is unbounded from below.
  \end{itemize}
  In particular 
  %for $\alpha=1$ 
  we obtain 
   $\int_X f_{1} |\log f_{1}| \log ( \log (f_{1}+3))^{p} dV <+\infty$ for all $p<2n-1$,
  while ${\rm diam}(B^*,d_{\omega_{1}})=+\infty$
  and \CK is satisfied (if $n \geq 2$).

  \medskip
 
   \fbox{ {\bf 4.}}  Consider $\chi_{\a}(t)=-(\log (-t))^{\a}$, where $\a>0$.

    \medskip
  
  \noindent
  Of course in that case the potential is unbounded but there is interesting geometric behavior associated to these metrics. 
We have 
\[\chi_{\a}'(t)=\frac{\a}{(-t) (\log (-t))^{1-\a}}, \quad \mbox{and} \quad   \chi_{\a}''(t) \sim \frac{\a}{t^2 (\log (-t))^{1-\a}}.\] Since $\int_{-\infty} \frac{dt}{|t| (\log (-t))^{\frac{1+\a}{2}}} dt=+\infty$, we infer that  ${\rm diam}(B^*,d_{\a})=+\infty.$
 Next, we have 
 $$
  \int_{-\infty} \chi_{\a}''(t) (\chi_{\a}'(t))^{n-1}  |t|^n h(\log |t|)^n dt 
=  \int_{-\infty} \frac{h(\log |t|)^n(\log |t| )^{n\a}dt}{|t| \log^n |t|}=\int^{+\infty}\frac{h(s)^nds}{s^{n(1-\alpha)}}=+\infty
  $$
  since $h\ge 1$ near $+\infty$. Thus \CK is never satisfied, as expected. However, the density $f$ of $\omega_\alpha$ with respect to a smooth volume form $dV_X$ satisfies
   $$
  \int_X f (\log f)^n (\log \log (f+3))^p dV_X \sim \int^{+\infty} \frac{ds}{s^{n-n\alpha-p}}
  $$
  which converges iff $p<n-1-n\alpha$. 
  %In particular, 
  Taking $0<\alpha \ll 1$, we get a density 
   such that $f( \log f)^n (\log \log (f+3))^{n-1-\ep}\in L^1$, which shows how close to sharp \CK is.

\medskip

As for the Ricci curvature, we will only treat the case $n=\alpha=1$ for simplicity, since we already see an interesting phenomenon appear. More precisely, if $n=\alpha=1$, then we are just considering the Poincaré metric on the punctured disk. 
Thus
  \begin{itemize}
  \item[$\bullet$] \CK is never satisfied; $v_{\alpha}$ is unbounded;
  \item[$\bullet$] ${\rm diam}(B^*,d_{\omega_{\a}})=+\infty$ for all $\a>0$;
  \item[$\bullet$] $\Ric \, \om_{\a}=-\omega_{\alpha}$ if $\alpha=n=1$.
  \end{itemize}
  
  %In particular its Ricci curvature is a negative constant. 
The Ricci curvature of the smoothing $\widetilde{\om}_\ep:=dd^c (-\log (-\log(|z|^2+\ep^2)))$,  is 
however unbounded from below as $\ep \to 0$. Indeed 
 set $\chi(t)=-\log(-t), F:=\ep^2e^{-t}\in (0,1]$ and $s:=-t\ge 0$. Here, $t=\log(|z|^2+\ep^2)$. Next, set $\psi:=\chi''+(\chi'-\chi'')F$, so that 
   \[\psi= \frac 1{s^2}+F\Big(\frac 1s-\frac {1}{s^2}\Big), \quad \psi'=\frac 2{s^3}+F\Big(-\frac 1s+\frac 2{s^2}-\frac{2}{s^3}\Big),\quad  \psi''=\frac 6{s^4}+F\Big(\frac 1s-\frac 3{s^2}+\frac{6}{s^3}-\frac{6}{s^4}\Big).\]  
   From the computations in \textsection~\ref{sec:ricci}, the Ricci curvature of 
   $\widetilde{\om}_\ep$ is bounded from below uniformly if and only if we have $C>0$ such that 
   \begin{equation}
   \label{minoration}
   F\psi^2+(1-F)\psi'^2-F\psi\psi'-(1-F)\psi\psi'' \ge -C \psi^3.
   \end{equation}
   
   {\it Case 1.} When $F$ stays bounded away from $0$ (i.e. very close to the origin). Then we have the asymptotics
 $
 \psi \approx \frac F s, \quad\psi'\approx -\frac F {s}, \quad\psi'' \approx \frac F{s}.
 $
The LHS of \eqref{minoration} is equivalent to $\frac{2F^3}{s^2}$ hence the inequality is satisfied for large $C$.\\

{\it Case 2.} When $F$ get close to zero (relatively far away from the origin). Brute force computations show that the asymptotics of the LHS are \[-\frac 2 {s^6}-\frac{F}{s^3}+\frac{2F^3}{s^2}.\]
Since $\psi^3 \approx \frac{1}{s^6}+\frac{F^3}{s^3}$, one has $\frac{F}{s^3} \gg \frac{F^3}{s^3}$. If we work near $|z|^2=\ep^2(-\log \ep)^\alpha$ with $\alpha\in (\frac 12, 3)$, then one can check that  $\frac{F}{s^3} \gg \max\{\frac 1 {s^6}, \frac{F^3}{s^2}\}$. This shows that \eqref{minoration} is violated.

\subsection{Lower bounds on the Ricci curvature} \label{sec:ricci}

  We analyze whether the Ricci curvature of smooth K\"ahler metrics approximating the radial examples is bounded below.

\subsubsection*{Computation of the metric}
%Let $\chi: (-\infty, 0] \to \mathbb R$ be a non-decreasing convex function. From now on, we work on $\mathbb C^n$.  For $|z|,\ep <e^{-1}$, 
We set 
$\ome:=dd^c \Big[\chi \circ \log(\zze)\Big]$
%and we are interested in computing $\Ric \ome$ and see if there exists or not a uniform lower bound for it. \\
and observe that 
\begin{equation}
\label{ddc}
\ome = \frac {\sqrt{-1}}{\zze} \sum_{i,j} \Big(\chi' \delta_{ij}+(\chi''-\chi') \cdot \frac{\bar z_i z_j}{\zze}\Big) \, dz_i\wedge d\bar z_j
\end{equation}
where we write $\chi'$ in place of $\chi' \circ \log(\zze)$ and similarly for $\chi''$. In view of the expression above, it is convenient to introduce the matrix 
$$A(z):=\left( \frac{\bar z_i z_j}{\zze}\right)_{ij}$$
It is semipositive hermitian with rank one and its non-zero eigenvalue coincides with its trace, i.e. $\frac{\zz}{\zze}$. With these notations, the Kähler form $\ome$ is associated to the hermitian matrix 
$$H(\ome):=\frac1\zze \left(\chi' I+(\chi''-\chi') A(z)\right).$$
Its eigenvalues are 
$\frac {\chi'}{\zze}$
with multiplicity $n-1$ and $$\frac 1 \zze \left(\chi'+(\chi''-\chi') \cdot  \frac{\zz}{\zze}\right)=\frac 1 \zze \left(\chi''+(\chi'-\chi'') \cdot  \frac{\ep^2}{\zze}\right)$$ with multiplicity $1$. \\

A useful computation  is that of $dd^c \log f $, where $f$ is positive non-decreasing and convex and as usual, we omit the composition with $\log(\zze)$. Then 
\begin{align*}
dd^c \log f &= \frac{dd^c f}{f}-\frac {df\wedge d^c f}{f^2}\\
&=\frac{\sqrt{-1}}{\zze}\sum_{i,j} \Big(\frac {f'}f \delta_{ij}+\Big[\frac{f''-f'}{f}-\frac{f'^2}{f^2}\Big]\cdot \frac{\bar z_i z_j}{\zze}\Big) \, dz_i\wedge d\bar z_j
\end{align*} 
so that 
$$H(dd^c \log f)=\frac{1}{(\zze)\cdot f^2} \left([f\cdot f']\cdot I +[f(f''-f') -f'^2]\cdot A(z)\right).$$

\subsubsection*{Computation of the Ricci curvature}
The determinant of $H(\ome)$ is given by
$$
\det H(\ome)=\frac 1{(\zze)^n} \cdot \chi'^{n-1} \cdot \left(\chi''+(\chi'-\chi'') \cdot  \frac{\ep^2}{\zze}\right)
$$
and therefore, the Ricci curvature of $\ome$ is given by 
$$\Ric \ome = \underbrace{n\,  dd^c \log(\zze)}_{=:{\bf \rm (I)}} \underbrace{-(n-1) dd^c \log \chi'}_{=:{\bf \rm (II)}} \underbrace{- dd^c \log  \left(\chi''+(\chi'-\chi'') \cdot  \frac{\ep^2}{\zze}\right)}_{=:{\bf \rm (III)}}$$
and we can decompose the hermitian matrix $H(\Ric \ome)$ associated to $\Ric \ome$ as $$H(\Ric \ome)=\frac 1 \zze (H_1+H_2+H_3)$$ 
where
$H_1= nI-nA(z)$
has eigenvalues
$$\begin{cases} 
\lambda_1 =n \quad \mbox{with multiplicity } (n-1)\\
\mu_1=\frac{n \ep^2}{\zze} \quad \mbox{with multiplicity } 1
\end{cases}$$
and
\begin{align*}
H_2&=-(n-1) (\zze ) H(dd^c \log \chi')\\
%&=\frac{-(n-1)}{\zze}\left(\frac 1 {\chi'} \cdot (\chi'' I+(\chi'''-\chi'')A(z))-\frac{{\chi''}^2}{\chi'^2}\cdot A(z) \right)\\
&=\frac{-(n-1) }{\chi'^2}\left( [\chi'\cdot\chi''] \cdot I +\Big[\chi'(\chi'''-\chi'')-\chi''^2\Big] \cdot A(z)\right).
\end{align*}
has eigenvalues
$$\begin{cases} 
\lambda_2 =-(n-1)\frac{ \chi''}{ \chi'} \quad \mbox{with multiplicity } (n-1)\\
\mu_2=-(n-1)\Big[ \Big(\frac{\chi'''}{\chi'}-\big(\frac{\chi''}{\chi'}\big)^2\Big) - \frac{\ep^2}{\zze} \Big(\frac{\chi'''-\chi''}{\chi'}-\big(\frac{\chi''}{\chi'}\big)^2\Big)\Big]\quad \mbox{with multiplicity } 1
\end{cases}$$
As for $H_3$, it is convenient to set 
$$
\psi(t):=  \chi''(t)+(\chi'(t)-\chi''(t))\cdot \ep^2e^{-t}\\
=\chi''(t)-\ep^2 (\chi'(t)e^{-t})'
$$
so that 
%\begin{align*}
%H_3&=-(\zze) H(dd^c \log \psi)\\
%&=-H\left( \frac{dd^c \psi}{\psi}-\frac{d\psi\wedge d^c \psi}{{\psi}^2}\right)\\
%&=-\frac{1}{\psi^2}\left( [\psi\cdot\psi'] \cdot I +\Big[\psi(\psi''-\psi')-\psi'^2\Big] \cdot A(z)\right).
%\end{align*}
$$
H_3=-(\zze) H(dd^c \log \psi)
%&=-H\left( \frac{dd^c \psi}{\psi}-\frac{d\psi\wedge d^c \psi}{{\psi}^2}\right)\\
=-\frac{1}{\psi^2}\left( [\psi\cdot\psi'] \cdot I +\Big[\psi(\psi''-\psi')-\psi'^2\Big] \cdot A(z)\right).
$$
has eigenvalues
$$\begin{cases} 
\lambda_3 =\frac{\psi'}{ \psi} \quad \mbox{with multiplicity } (n-1)\\
\mu_3=- \Big[ \Big(\frac{\psi''}{\psi}-\big(\frac{\psi'}{\psi}\big)^2\Big) - \frac{\ep^2}{\zze} \Big(\frac{\psi''-\psi'}{\psi}-\big(\frac{\psi'}{\psi}\big)^2\Big)\Big]\quad \mbox{with multiplicity } 1
\end{cases}$$
Therefore, the eigenvalues $\lambda:=\sum_{i=1}^3 \lambda_i$ and $\mu:=\sum_{i=1}^3 \mu_i$ with respective multiplicity $(n-1)$ and $1$ of $H_1+H_2+H_3$ are
$\lambda=n-\Big[(n-1) \frac {\chi''}{\chi'}+\frac{\psi'}{\psi}\Big]$
and
\begin{align*}
\mu=&- \Big[(n-1) \Big(\frac{\chi'''}{\chi'}-\big(\frac{\chi''}{\chi'}\big)^2\Big)+ \Big(\frac{\psi''}{\psi}-\big(\frac{\psi'}{\psi}\big)^2\Big) \Big] \\
&+\frac{\ep^2}{\zze} \Big[n +\Big\{(n-1)\Big(\frac{\chi'''-\chi''}{\chi'}-\big(\frac{\chi''}{\chi'}\big)^2\Big)+ \Big(\frac{\psi''-\psi'}{\psi}-\big(\frac{\psi'}{\psi}\big)^2\Big)\Big\} \Big]
\end{align*}
and we have $\Ric \ome \ge -C \ome$ for some constant $C>0$ if and only if 
$$
\lambda \ge -C \chi' 
\; \; \text{ and } \; \; 
\mu \ge -C (\chi''+(\chi'-\chi'') \cdot  \frac{\ep^2}{\zze}).
$$

 \subsubsection*{Special case when $\ep=0$}
 In order to show that the Ricci curvature of $\ome$ cannot be uniformly bounded from below, it is enough (though not sufficient in general) to show that the Ricci curvature of $\om$ is unbounded from below. The computations become simpler since one has then 
$
\lambda=n-\Big[(n-1) \frac {\chi''}{\chi'}+\frac{\chi^{(3)}}{\chi''}\Big]
$
and
$$
\mu=- \Big[(n-1) \Big(\frac{\chi'''}{\chi'}-\big(\frac{\chi''}{\chi'}\big)^2\Big)+ \Big(\frac{\chi^{(4)}}{\chi''}-\Big(\frac{\chi^{(3)}}{\chi''}\Big)^2\Big) \Big] 
$$
 and one only has to disprove one of the following inequalities
 $$ 
\lambda \ge -C \chi'
\; \; \text{ or } \; \; 
\mu \ge -C \chi''.
$$

\bibliographystyle{smfalpha}
\bibliography{biblioFKR}

\end{document}